\newtheorem{theorem}{Theorem}
\newtheorem{remark}{Remark}
\newtheorem{lemma}{Lemma}
\newtheorem{proof}{Proof}
\newtheorem{corollary}{Corollary}
\newtheorem{definition}{Definition}
\newtheorem{assumption}{Assumption}
\newcommand{\x}{\mathbf{x}}
\newcommand{\y}{\mathbf{y}}
\newcommand{\z}{\mathbf{z}}
\newcommand{\s}{\mathbf{s}}
\renewcommand{\u}{\mathbf{u}}
\newcommand{\R}{\mathbb{R}}
\newcommand{\N}{\mathcal{N}}
\newcommand{\W}{W}
\newcommand{\I}{I}
\newcommand{\A}{A}
\newcommand{\1}{\mathbf{1}}
\newcommand{\M}{\mathcal{M}}
\newcommand{\cS}{\mathcal{S}}
\newcommand{\bO}{{\cal O}}
\newcommand{\<}{\left\langle}
\renewcommand{\>}{\right\rangle}
\newcommand{\tabincell}[2]{\begin{tabular}{@{}#1@{}}#2\end{tabular}}
\icmltitlerunning{Accelerated Gradient Tracking over Time-varying Graphs}
\begin{document}

\onecolumn
\icmltitle{Accelerated Gradient Tracking  over Time-varying Graphs for Decentralized Optimization}

\begin{icmlauthorlist}
\icmlauthor{Huan Li}{to}
\icmlauthor{Zhouchen Lin}{goo}
\end{icmlauthorlist}

\icmlaffiliation{to}{Institute of Robotics and Automatic Information Systems, College of Artificial Intelligence, Nankai University, Tianjin, China (lihuanss@nankai.edu.cn).\\}
\icmlaffiliation{goo}{State Key Lab of General AI, School of Intelligence Science and Technology, Peking University, Beijing, China
    (zlin@pku.edu.cn).}





\vskip 0.3in



\printAffiliationsAndNotice{}  

\begin{abstract}
  Decentralized optimization over time-varying graphs has been increasingly common in modern machine learning with massive data stored on millions of mobile devices, such as in federated learning. This paper revisits the widely used accelerated gradient tracking and extends it to time-varying graphs. We prove that the practical single loop accelerated gradient tracking needs $\bO((\frac{\gamma}{1-\sigma_{\gamma}})^2\sqrt{\frac{L}{\epsilon}})$ and $\bO((\frac{\gamma}{1-\sigma_{\gamma}})^{1.5}\sqrt{\frac{L}{\mu}}\log\frac{1}{\epsilon})$ iterations to reach an $\epsilon$-optimal solution over time-varying graphs when the problems are nonstrongly convex and strongly convex, respectively, where $\gamma$ and $\sigma_{\gamma}$ are two common constants charactering the network connectivity, $L$ and $\mu$ are the smoothness and strong convexity constants, respectively, and one iteration corresponds to one gradient oracle call and one communication round. Our convergence rates improve significantly over the ones of $\bO(\frac{1}{\epsilon^{5/7}})$ and $\bO((\frac{L}{\mu})^{5/7}\frac{1}{(1-\sigma)^{1.5}}\log\frac{1}{\epsilon})$, respectively, which were proved in the original literature of accelerated gradient tracking only for static graphs, where $\frac{\gamma}{1-\sigma_{\gamma}}$ equals $\frac{1}{1-\sigma}$ when the network is time-invariant. When combining with a multiple consensus subroutine, the dependence on the network connectivity constants can be further improved to $\bO(1)$ and $\bO(\frac{\gamma}{1-\sigma_{\gamma}})$ for the gradient oracle and communication round complexities, respectively. When the network is static, by employing the Chebyshev acceleration, our complexities exactly match the lower bounds without hiding any poly-logarithmic factor for both nonstrongly convex and strongly convex problems.
\end{abstract}

\section{Introduction}
Distributed optimization has emerged as a promising framework in machine learning motivated by large-scale data being produced or stored in a network of nodes. Due to the popularity of smartphones and their growing computational power, time-varying graphs are increasingly common in modern distributed optimization, where the communication links in the network may vary with time, and the devices may not be active all the time such that the network may be even unconnected at each time. A typical example is federated learning \citep{federated20,federated19}, which involves training a global statistical model from data stored on millions of mobile devices. The physical constraints on each device typically result in only a small fraction of the devices being active at once, and it is possible for an active device to drop out at a given time \citep{federated-system}. Although centralized network is the predominant topology in most machine learning systems, such as TensorFlow, decentralized network has been a potential alternative because it reduces the high communication cost on the central server \citep{liu-2017}. This motivates us to study decentralized optimization over time-varying graphs. In this paper, we consider the following convex optimization problem:
\begin{eqnarray}
\min_{x\in\R^p} F(x)=\frac{1}{m}\sum_{i=1}^m f_{(i)}(x),\label{problem}
\end{eqnarray}
where the local objective functions $f_{(i)}$ are distributed separately over a network of nodes. The network is mathematically represented as a sequence of time-varying graphs $\{\mathcal{G}^0,\mathcal{G}^1,...\}$, and each graph instance $\mathcal{G}^k$ consists of a fixed set of agents $\mathcal{V}=\{1,...,m\}$ and a set of time-varying edges $\mathcal{E}^k$. Agents $i$ and $j$ can exchange information at time $k$ if and only if $(i,j)\in\mathcal{E}^k$. Each agent $i$ privately holds a local objective $f_{(i)}$, and makes its decision only based on the local computations on $f_{(i)}$ and the local information received from its neighbors. The local objective functions are assumed to be smooth. We consider both strongly convex and nonstrongly convex objectives in this paper.

Although decentralized optimization over static graphs has been well studied, for example, lower bounds on the number of communication rounds and gradient or stochastic gradient oracle calls for strongly convex and smooth problems are well-known \citep{dasent,scaman-2019-jmlr,ADFS}, and optimal accelerated algorithms with upper bounds exactly matching the lower bounds are developed \citep{richtaric-2020-dist,VRGT-Li20}, for the time-varying graphs, the problem is more challenging. It is unclear how to design practical accelerated methods with the optimal dependence on the precision $\epsilon$ and the condition number of the objectives, exactly matching that of the classical centralized accelerated gradient descent. In this paper, we aim to address this question.

\subsection{Notations and Assumptions}\label{sec:notation}
Throughout this article, we denote $x_{(i)}$ to be the local variable for agent $i$. We use the subscript $(i)$ to distinguish the $i$th element of vector $x$. To write the algorithm in a compact form, we introduce the aggregate objective function $f(\x)$ with its aggregate variable $\x\in\R^{m\times p}$ and aggregate gradient $\nabla f(\x)\in\R^{m\times p}$ as
\begin{eqnarray}
\qquad\begin{aligned}\label{aggregate}
f(\x)=\sum_{i=1}^m f_{(i)}(x_{(i)}),\quad\x=\left(
  \begin{array}{c}
    x_{(1)}^{\top}\\
    \vdots\\
    x_{(m)}^{\top}
  \end{array}
\right),\quad \nabla f(\x)=\left(
  \begin{array}{c}
    \nabla f_{(1)}(x_{(1)})^{\top}\\
    \vdots\\
    \nabla f_{(m)}(x_{(m)})^{\top}
  \end{array}
\right).
\end{aligned}
\end{eqnarray}
Denote $\x^k$ to be the value at iteration $k$. For scalars, for example, $\theta$, we use $\theta_k$ instead of $\theta^k$ to denote the value at iteration $k$, while the latter represents its $k$th power. Specially, $x^{\top}$ means the transpose of $x$. We denote $\|\cdot\|$ to be the Frobenius norm for matrices and the $\ell_2$ Euclidean norm for vectors uniformly, and $\|\cdot\|_2$ as the spectral norm of matrices. Denote $\I$ as the identity matrix and $\1$ as the column vector of $m$ ones. Assume that problem (\ref{problem}) has a solution, and let $x^*$ be any one of them. Define the average variable across all the local variables as
\begin{eqnarray}
\overline x=\frac{1}{m}\sum_{i=1}^mx_{(i)},\quad \overline y=\frac{1}{m}\sum_{i=1}^my_{(i)},\quad \overline z=\frac{1}{m}\sum_{i=1}^mz_{(i)},\quad \overline s=\frac{1}{m}\sum_{i=1}^ms_{(i)},\label{def_av_x}
\end{eqnarray}
where $x$, $y$, $z$, and $s$ will be used later in the development of the algorithm. Define operator $\Pi=\I-\frac{\1\1^{\top}}{m}$ to measure the consensus violation such that
\begin{equation}
\Pi\x=\left(
  \begin{array}{c}
    x_{(1)}^{\top}-\overline x^{\top}\\
    \cdots\\
    x_{(m)}^{\top}-\overline x^{\top}\\
  \end{array}
\right).\label{def_pi}
\end{equation}

We make the following assumptions for each local objective function in problem (\ref{problem}).
\begin{assumption}\label{assumption_f}
$\vspace*{-0.6cm}\linebreak$
\begin{enumerate}
\item Each $f_{(i)}(x)$ is $\mu$-strongly convex: $f_{(i)}(y)\geq f_{(i)}(x)+\<\nabla f_{(i)}(x),y-x\>+\frac{\mu}{2}\|y-x\|^2$. Especially, we allow $\mu$ to be zero throughout this paper, and in this case we say $f_{(i)}(x)$ is convex.
\item Each $f_{(i)}(x)$ is $L$-smooth, that is, $f_{(i)}(x)$ is differentiable and its gradient is $L$-Lipschitz continuous: $\|\nabla f_{(i)}(y)-\nabla f_{(i)}(x)\|\leq L\|y-x\|$.
\end{enumerate}
\end{assumption}
A direct consequence of the smoothness and convexity assumptions is the following property \citep{Nesterov-2004}:
\begin{eqnarray}
\frac{1}{2L}\|\nabla f_{(i)}(y)-\nabla f_{(i)}(x)\|^2\leq f_{(i)}(y)-f_{(i)}(x)-\<\nabla f_{(i)}(x),y-x\>\leq \frac{L}{2}\|y-x\|^2.\label{cont43}
\end{eqnarray}

The information exchange between different agents in the network is realized through a gossip matrix such that communication can be represented as a matrix multiplication with the gossip matrix. When the network is static, we make the following standard assumptions for the gossip matrix $\W\in\R^{m\times m}$ \citep{qu2017}:
\begin{assumption}\label{assumption_w}
$\vspace*{-0.6cm}\linebreak$
\begin{enumerate}
\item (Decentralized property) $\W_{i,j}>0$ if and only if $(i,j)\in\mathcal{E}$ or $i=j$. Otherwise, $\W_{i,j}=0$.
\item (Double stochasticity) $\W\1=\1$ and $\1^{\top}\W=\1^{\top}$.
\end{enumerate}
\end{assumption}
Note that we do not assume that $\W$ is symmetric. If the network is connected, Assumption \ref{assumption_w} implies that the second largest singular value $\sigma$ of $\W$ is less than 1 (its largest one equals 1), that is, $\sigma=\|W-\frac{1}{m}\1\1^{\top}\|_2<1$. Moreover, we have the following classical consensus contraction:
\begin{eqnarray}
\qquad\|\Pi W\x\|=\left\|\left(W-\frac{1}{m}\1\1^{\top}\right)\x\right\|=\left\|\left(W-\frac{1}{m}\1\1^{\top}\right)\left(\I-\frac{1}{m}\1\1^{\top}\right)\x\right\|\leq \sigma\|\Pi\x\|.\label{consensus-contraction}
\end{eqnarray}
We often use $\frac{1}{1-\sigma}$ as the condition number of the communication network.

When the network is time-varying, each graph instance $\mathcal{G}^k$ associates with a gossip matrix $\W^k$. Denote
\begin{equation}
W^{k,\gamma}=W^kW^{k-1}\cdots W^{k-\gamma+1},\quad\mbox{ for any }k\geq\gamma-1,\label{def_W_tau}
\end{equation}
$W^{k,0}=\I$, and we follow \citep{shi2017} to make the following standard assumptions for the sequence of gossip matrices $\{\W^k\}_{k=0}^{\infty}$.
\begin{assumption}\label{assumption_w_tv}
$\vspace*{-0.6cm}\linebreak$
\begin{enumerate}
\item (Decentralized property) $\W^k_{i,j}>0$ if and only if $(i,j)\in\mathcal{E}^k$ or $i=j$. Otherwise, $\W_{i,j}^k=0$.
\item (Double stochasticity) $\W^k\1=\1$ and $\1^{\top}\W^k=\1^{\top}$.
\item (Joint spectrum property) There exists a constant integer $\gamma$ such that
\begin{equation}
\sigma_{\gamma}<1,\quad\mbox{where}\quad \sigma_{\gamma}=\sup_{k\geq \gamma-1} \left\|W^{k,\gamma}-\frac{1}{m}\1\1^{\top}\right\|_2.\notag
\end{equation}
\end{enumerate}
\end{assumption}
Assumption \ref{assumption_w_tv} is weaker than the assumption that every graph $\mathcal{G}^k$ is connected. A typical example of the gossip matrix satisfying Assumption \ref{assumption_w_tv} is the Metropolis weight over $\gamma$-connected graphs. The former is defined as
\begin{equation}
W_{ij}^k=\left\{
  \begin{array}{ll}
    1/(1+\max\{d_i^k,d_j^k\}), &\mbox{if }(i,j)\in\mathcal{E}^k,\\
    0,&\mbox{if }(i,j)\neq\mathcal{E}^k\mbox{ and }i\neq j,\\
    1-\sum_{l\in\N_{(i)}^k}W_{il}^k, &\mbox{if } i=j,
  \end{array}
\right.\label{weight-matrix}
\end{equation}
where $\N_{(i)}^k$ is the set of neighbors of agent $i$ at time $k$, and $d_i^k=|\N_{(i)}^k|$ is the degree. The $\gamma$-connected graph sequence is defined as follows \citep{shi2017}.
\begin{definition}
The time-varying undirected graph sequence $\{\mathcal{V},\mathcal{E}^k\}_{k=0}^{\infty}$ is $\gamma$-connected if there exists some positive integer $\gamma$ such that the union of these $\gamma$ consecutive undirected graphs $\{\mathcal{V},\cup_{r=k}^{k+\gamma-1}\mathcal{E}^r\}$ is connected for all $k=0,1,...$.
\end{definition}
When Assumption \ref{assumption_w_tv} holds, we have the following $\gamma$-step consensus contraction:
\begin{equation}
\|\Pi W^{k,\gamma}\x\|\leq\sigma_{\gamma}\|\Pi\x\|,\qquad\mbox{for any }k\geq\gamma-1.\label{tau_consensus-contraction}
\end{equation}
When the algorithm proceeds less than $\gamma$ steps, we only have
\begin{equation}
\|\Pi W^{k,t}\x\|\leq\|\Pi\x\|,\qquad\mbox{for any }0\leq t<\gamma\mbox{ and }k\geq t-1.\label{tau_consensus-contraction2}
\end{equation}

In decentralized optimization, people often use communication round complexity and gradient oracle complexity to measure the convergence speed. The former means the number of communication rounds to reach an $\epsilon$-optimal solution with $F(x)-F(x^*)\leq \epsilon$, while the latter means the number of gradient oracle calls. In one communication round, all the agents can receive $\bO(1)$ vectors, such as $x_{(j)}^k$, from each of its neighbors in parallel, which can be represented as $W^k\x^k$ mathematically. In one gradient oracle call, all the agents call the oracle to compute their local gradients $\nabla f_{(i)}(x_{(i)}^k)$ in parallel.

\subsection{Literature Review}
In this section, we briefly review the decentralized algorithms over static graphs and time-varying graphs, mainly focusing on the accelerated methods. We emphasize gradient tracking \citep{shi2017} and its acceleration \citep{qu2017-2}, which are mostly relevant for our work. Tables \ref{table-comp} and \ref{table-comp2} sum up the complexity comparisons of the state-of-the-art methods.
\subsubsection{Decentralized Optimization over Static Graphs}

Decentralized optimization has been studied for a long time \citep{Bertsekas1983,Tsitsiklis1986}. The representative decentralized algorithms include distributed gradient/subgradient descent (DGD) \citep{Nedic-2009,nedic2011asynchronous,Ram-2010,Yuan-2016}, EXTRA \citep{shi2015extra,Shi-2015-2}, gradient tracking \citep{shi2017,qu2017,aug-dgm,ranxin2018}, NIDS \citep{NIDS}, as well as the dual based methods, such as dual ascent \citep{Terelius-2011}, dual averaging \citep{Duchi12-da}, ADMM \citep{Wei-13-asy,Iutzeler-2016,makhdoumi-2017}, and the primal-dual method \citep{Lam-2017,scaman-2018,hong-2017,jakovetic-2017}. Among these methods, gradient tracking has the $\bO((\frac{L}{\mu}+\frac{1}{(1-\sigma)^2})\log\frac{1}{\epsilon})$ communication round and gradient oracle complexities for strongly convex problems and the $\bO(\frac{L}{\epsilon(1-\sigma)^2})$ complexities for nonstrongly convex ones. Recently, accelerated decentralized methods have gained significant attention due to their provable faster convergence rates.

\emph{Accelerated Methods for Strongly Convex and Smooth Decentralized Optimization.} The accelerated methods which can be applied to this scenario include the accelerated distributed Nesterov gradient descent (Acc-DNGD) \citep{qu2017-2}, the robust distributed accelerated stochastic gradient method \citep{Fallah19}, the multi-step dual accelerated method \citep{dasent,scaman-2019-jmlr}, accelerated penalty method (APM) \citep{li-2018-pm,Dvinskikh2019}, the multi-consensus decentralized accelerated gradient descent (Mudag) \citep{mudag20,mudag2-20}, accelerated EXTRA \citep{li-2019-extra,VRGT-Li20}, the decentralized accelerated augmented Lagrangian method \citep{Arjevani20}, and the accelerated proximal alternating predictor-corrector method (APAPC) \citep{richtaric-2020-dist}. \citet{dasent,scaman-2019-jmlr} proved the $\varOmega(\sqrt{\frac{L}{\mu(1-\sigma)}}\log\frac{1}{\epsilon})$ and $\varOmega(\sqrt{\frac{L}{\mu}}\log\frac{1}{\epsilon})$ lower bounds for communication rounds and gradient oracle calls, respectively. To the best of our knowledge, APAPC combined with the Chebyshev acceleration (CA) \citep{Arioli-2014} is the first to exactly achieve these lower bounds without hiding any poly-logarithmic factor. Although gradient tracking has been widely used in practice, its accelerated variant, Acc-DNGD, only has the $\bO((\frac{L}{\mu})^{5/7}\frac{1}{(1-\sigma)^{1.5}}\log\frac{1}{\epsilon})$ communication round and gradient oracle complexities \citep{qu2017-2}.

\begin{table}[t]
\caption{Comparisons among the state-of-the-art complexities of decentralized methods over static graphs, as well as those of gradient tracking and its accelerated variant Acc-DNGD. Double loop means the method needs to call a subroutine with multiple steps at each iteration, such as the Chebyshev acceleration, the multiple consensus, the gradient evaluation of Fenchel conjugate, or the minimization of a subproblem.}\label{table-comp}
\begin{center}
\begin{tabular}{|c|c|c|c|}
\hline
 \footnotesize Methods & \tabincell{c}{\footnotesize gradient oracle\\\footnotesize complexity}      & \tabincell{c}{\footnotesize communication round\\\footnotesize complexity} & \tabincell{c}{\footnotesize single or\\\footnotesize double loop}\\
\hline
\multicolumn{4}{c}{\footnotesize Nonstrongly convex and smooth functions}\\
\hline
 \tabincell{c}{\footnotesize Gradient tracking\\ \footnotesize\citep{qu2017}} & \footnotesize$\bO\left(\frac{L}{\epsilon(1-\sigma)^2}\right)$ & \footnotesize$\bO\left(\frac{L}{\epsilon(1-\sigma)^2}\right)$ & \footnotesize single\\
 \tabincell{c}{\footnotesize Acc-DNGD\\ \footnotesize\citep{qu2017-2}} & \footnotesize$\bO\left(\frac{1}{\epsilon^{5/7}}\right)$ & \footnotesize$\bO\left(\frac{1}{\epsilon^{5/7}}\right)$ & \footnotesize single\\
 \tabincell{c}{\footnotesize APM\\ \footnotesize\citep{li-2018-pm}\\ \hspace*{-0.2cm}\footnotesize\citep{Dvinskikh2019}} \hspace*{-0.3cm} & \footnotesize$\bO\left(\sqrt{\frac{L}{\epsilon}}\right)$ & \footnotesize$\bO\left(\sqrt{\frac{L}{\epsilon(1-\sigma)}}\log\frac{1}{\epsilon}\right)$ & \footnotesize double\\
 \tabincell{c}{\footnotesize Acc-EXTRA\\ \footnotesize\citep{li-2019-extra}}  & \footnotesize$\bO\left(\sqrt{\frac{L}{\epsilon(1-\sigma)}}\log\frac{1}{\epsilon}\right)$ & \footnotesize$\bO\left(\sqrt{\frac{L}{\epsilon(1-\sigma)}}\log\frac{1}{\epsilon}\right)$ & \footnotesize double\\
 \hline
 \tabincell{c}{\footnotesize Our results for Acc-GT}  & \footnotesize$\bO\left(\frac{1}{(1-\sigma)^2}\sqrt{\frac{L}{\epsilon}}\right)$ & \footnotesize$\bO\left(\frac{1}{(1-\sigma)^2}\sqrt{\frac{L}{\epsilon}}\right)$ & \footnotesize single\\
 \tabincell{c}{\footnotesize Our results for Acc-GT+CA}  & \footnotesize$\bO\left(\sqrt{\frac{L}{\epsilon}}\right)$ & \footnotesize$\bO\left(\sqrt{\frac{L}{\epsilon(1-\sigma)}}\right)$ & \footnotesize double\\
 \hline
  \tabincell{c}{\footnotesize Lower bounds \\ \footnotesize\citep{scaman-2019-jmlr}}  & \footnotesize$\bO\left(\sqrt{\frac{L}{\epsilon}}\right)$ & \footnotesize$\bO\left(\sqrt{\frac{L}{\epsilon(1-\sigma)}}\right)$ & \footnotesize $\backslash$\\
\hline
\multicolumn{4}{c}{\footnotesize Strongly convex and smooth functions}\\
\hline
 \tabincell{c}{\footnotesize Gradient tracking\\ \footnotesize\citep{sulaiman2020}} & \footnotesize$\bO\left(\left(\frac{L}{\mu}+\frac{1}{(1-\sigma)^2}\right)\log\frac{1}{\epsilon}\right)$ & \footnotesize$\bO\left(\left(\frac{L}{\mu}+\frac{1}{(1-\sigma)^2}\right)\log\frac{1}{\epsilon}\right)$ & \footnotesize single\\
 \tabincell{c}{\footnotesize Acc-DNGD\\ \footnotesize\citep{qu2017-2}} & \footnotesize$\bO\left(\left(\frac{L}{\mu}\right)^{5/7}\frac{1}{(1-\sigma)^{1.5}}\log\frac{1}{\epsilon}\right)$ & \footnotesize$\bO\left(\left(\frac{L}{\mu}\right)^{5/7}\frac{1}{(1-\sigma)^{1.5}}\log\frac{1}{\epsilon}\right)$ & \footnotesize single\\
 \tabincell{c}{\footnotesize APAPC+CA\\ \footnotesize\citep{richtaric-2020-dist}}  & \footnotesize$\bO\left(\sqrt{\frac{L}{\mu}}\log\frac{1}{\epsilon}\right)$ & \footnotesize$\bO\left(\sqrt{\frac{L}{\mu(1-\sigma)}}\log\frac{1}{\epsilon}\right)$ & \footnotesize double\\
 \hline
 \tabincell{c}{\footnotesize Our results for Acc-GT}  & \footnotesize$\bO\left(\sqrt{\frac{L}{\mu(1-\sigma)^3}}\log\frac{1}{\epsilon}\right)$ & \footnotesize$\bO\left(\sqrt{\frac{L}{\mu(1-\sigma)^3}}\log\frac{1}{\epsilon}\right)$ & \footnotesize single\\
 \tabincell{c}{\footnotesize Our results for Acc-GT+CA}  & \footnotesize$\bO\left(\sqrt{\frac{L}{\mu}}\log\frac{1}{\epsilon}\right)$ & \footnotesize$\bO\left(\sqrt{\frac{L}{\mu(1-\sigma)}}\log\frac{1}{\epsilon}\right)$ & \footnotesize double\\
 \hline
 \tabincell{c}{\footnotesize Lower bounds \\ \footnotesize\citep{scaman-2019-jmlr}}  & \footnotesize$\bO\left(\sqrt{\frac{L}{\mu}}\log\frac{1}{\epsilon}\right)$ & \footnotesize$\bO\left(\sqrt{\frac{L}{\mu(1-\sigma)}}\log\frac{1}{\epsilon}\right)$ & \footnotesize $\backslash$\\
\hline
\hline
\end{tabular}
\end{center}
\end{table}

\emph{Accelerated Methods for Nonstrongly Convex and Smooth Decentralized Optimization.} The accelerated methods for this scenario are much scarcer. Examples include the distributed Nesterov gradient with consensus \citep{Jakovetic-2014}, Acc-DNGD \citep{qu2017-2}, APM \citep{li-2018-pm,Dvinskikh2019}, accelerated EXTRA \citep{li-2019-extra}, and the accelerated dual ascent \citep{Uribe-2017}, where the last one adds a small regularizer to translate the problem to a strongly convex and smooth one. \citet{scaman-2019-jmlr} proved the $\varOmega(\sqrt{\frac{L}{\epsilon(1-\sigma)}})$ communication round complexity lower bound and the $\varOmega(\sqrt{\frac{L}{\epsilon}})$ gradient oracle complexity lower bound. To the best of our knowledge, there is no method matching these lower bounds exactly without hiding any poly-logarithmic factor. APM comes close to this target, but with an additional $\bO(\log\frac{1}{\epsilon})$ factor in the communication round complexity. Acc-DNGD, acceleration of gradient tracking, only has the $\bO(\frac{1}{\epsilon^{5/7}})$ complexities of communication rounds and gradient oracles, originally proved in \citep{qu2017-2}. Note that the dependence on $1-\sigma$, a small constant charactering the network connectivity, was not explicitly given in \citep{qu2017-2}. \citet{xu20aistate} proposed an accelerated primal dual method, however, their complexities remain $\bO(\frac{1}{\epsilon})$.

\subsubsection{Decentralized Optimization over Time-varying Graphs}

We review the decentralized algorithms over time-varying graphs in two scenarios. In the first scenario, the network may not be connected at every time, but it is assumed to be $\gamma$-connected. In the second scenario, the network is assumed to be connected at every time.

\emph{Not Connected at Every Time but $\gamma$-connected.} In this scenario, DIGing (that is, gradient tracking over time-varying graphs) \citep{shi2017}, PANDA \citep{panda18,panda19}, the time-varying $\mathcal{A}\mathcal{B}$/push-pull method \citep{tvab20}, the decentralized stochastic gradient descent (SGD) \citep{Koloskova20}, and the push-sum based methods \citep{pushsum16,pushsum15,shi2017} are the representative non-accelerated methods over time-varying graphs for convex problems, as well as NEXT \citep{next16} and SONATA \citep{sonata19} for nonconvex problems. When combing with Nesterov's acceleration, to the best of our knowledge, the decentralized accelerated gradient descent with consensus subroutine (DAGD-C) \citep{Alexander2221,Alexander21} is the only accelerated method for strongly convex and smooth objectives with explicit complexities in this general time-varying setting. However, the communication round complexity of DAGD-C has an additional $\bO(\log\frac{1}{\epsilon})$ factor compared with the classical centralized accelerated gradient method. For nonstrongly convex and smooth problems, no literature studies the accelerated methods over time-varying graphs. While APM \citep{li-2018-pm} was originally designed for static graphs, it can be easily extended to the time-varying case. However, as introduced in the previous section, APM also has an additional $\bO(\log\frac{1}{\epsilon})$ factor in the communication round complexity. Both DAGD-C and APM are double-loop methods, where one gradient is computed at each iteration of the outer loop, and multiple rounds of consensus communications follow up in the inner loop. The multiple consensus double loop may limit the applications of DAGD-C and APM. See the discussions in Remark \ref{remark4}.

\emph{Connected at Every Time.} In this scenario, the literature is rich and many distributed methods originally designed over static graphs, such as Acc-DNGD, can be directly used. \citet{Kovalev21,Kovalev21tvlb} proposed a dual based method named ADOM and its primal-only extension ADOM+, where the latter has the state-of-the-art $\bO(\frac{1}{1-\sigma}\sqrt{\frac{L}{\mu}}\log\frac{1}{\epsilon})$ communication round complexity and the $\bO(\sqrt{\frac{L}{\mu}}\log\frac{1}{\epsilon})$ gradient oracle complexity for strongly convex and smooth problems. \citet{Kovalev21tvlb} also established the lower bounds showing that their ADOM+ is optimal. \citet{Alexander20slow} gave the complexity of $\bO(\sqrt{\frac{L}{\mu(1-\sigma)}}\log\frac{1}{\epsilon})$ under a stronger assumption that the network changes slowly in the sense that the number of network changes cannot exceed some percentage of the number of total iterations. \citet{Nedic24-ac} studied the accelerated AB/push-pull method over directed graphs, but no accelerated rate is proved.

\begin{table}[t]
\caption{Comparisons among the state-of-the-art complexities of decentralized methods over time-varying graphs. We only compare with the methods working over $\gamma$-connected graphs.}\label{table-comp2}
\begin{center}
\begin{tabular}{|c|c|c|c|}
\hline
 \footnotesize Methods & \tabincell{c}{\footnotesize gradient oracle\\\footnotesize complexity}      & \tabincell{c}{\footnotesize communication round\\\footnotesize complexity} & \tabincell{c}{\footnotesize single or\\\footnotesize double loop}\\
\hline
\multicolumn{4}{c}{\footnotesize Nonstrongly convex and smooth functions}\\
\hline
\tabincell{c}{\footnotesize DGD\footnotemark[1]\\ \footnotesize\citep{Koloskova20}}  & \footnotesize$\bO\left(\frac{\gamma \overline \zeta\sqrt{L}}{(1-\sigma_{\gamma})\epsilon^{3/2}}+\frac{\gamma}{1-\sigma_{\gamma}}\frac{L}{\epsilon}\right)$ & \footnotesize$\bO\left(\frac{\gamma \overline \zeta\sqrt{L}}{(1-\sigma_{\gamma})\epsilon^{3/2}}+\frac{\gamma}{1-\sigma_{\gamma}}\frac{L}{\epsilon}\right)$ & \footnotesize single\\
 \tabincell{c}{\footnotesize APM\\ \footnotesize\citep{li-2018-pm}}  & \footnotesize$\bO\left(\sqrt{\frac{L}{\epsilon}}\right)$ & \footnotesize$\bO\left(\frac{\gamma}{1-\sigma_{\gamma}}\sqrt{\frac{L}{\epsilon}}\log\frac{1}{\epsilon}\right)$ & \footnotesize double\\
 \hline
 \tabincell{c}{\footnotesize Our results for Acc-GT}  & \footnotesize$\bO\left(\frac{\gamma^2}{(1-\sigma_{\gamma})^2}\sqrt{\frac{L}{\epsilon}}\right)$ & \footnotesize$\bO\left(\frac{\gamma^2}{(1-\sigma_{\gamma})^2}\sqrt{\frac{L}{\epsilon}}\right)$ & \footnotesize single\\
 \tabincell{c}{\footnotesize Our results for Acc-GT+\\ \footnotesize multiple consensus}  & \footnotesize$\bO\left(\sqrt{\frac{L}{\epsilon}}\right)$ & \footnotesize$\bO\left(\frac{\gamma}{1-\sigma_{\gamma}}\sqrt{\frac{L}{\epsilon}}\right)$ & \footnotesize double\\
\hline
\multicolumn{4}{c}{\footnotesize Strongly convex and smooth functions}\\
\hline
\tabincell{c}{\footnotesize DGD\\ \footnotesize\citep{Koloskova20}}  & \hspace*{-0.3cm}\footnotesize$\bO\left(\frac{\gamma \overline \zeta\sqrt{L}}{\mu(1-\sigma_{\gamma})\sqrt{\epsilon}}+\frac{\gamma}{1-\sigma_{\gamma}}\frac{L}{\mu}\log\frac{1}{\epsilon}\right)$\hspace*{-0.3cm} & \hspace*{-0.3cm} \footnotesize$\bO\left(\frac{\gamma \overline \zeta\sqrt{L}}{\mu(1-\sigma_{\gamma})\sqrt{\epsilon}}+\frac{\gamma }{1-\sigma_{\gamma}}\frac{L}{\mu}\log\frac{1}{\epsilon}\right)$\hspace*{-0.2cm} & \footnotesize single\\
 \tabincell{c}{\footnotesize DIGing\\ \footnotesize\citep{shi2017}} & \hspace*{-0.15cm}\footnotesize$\bO\left(\sqrt{m}\left(\frac{L}{\mu}\right)^{1.5}\frac{\gamma^3}{(1-\sigma_{\gamma})^2}\log\frac{1}{\epsilon}\right)$ \hspace*{-0.3cm} & \hspace*{-0.3cm} \footnotesize$\bO\left(\sqrt{m}\left(\frac{L}{\mu}\right)^{1.5}\frac{\gamma^3}{(1-\sigma_{\gamma})^2}\log\frac{1}{\epsilon}\right)$ \hspace*{-0.3cm}& \footnotesize single\\
 \tabincell{c}{\footnotesize DAGD-C\\ \footnotesize\citep{Alexander2221}}  & \footnotesize$\bO\left(\sqrt{\frac{L}{\mu}}\log\frac{1}{\epsilon}\right)$ & \footnotesize$\bO\left(\frac{\gamma}{1-\sigma_{\gamma}}\sqrt{\frac{L}{\mu}}\left(\log\frac{1}{\epsilon}\right)^2\right)$ & \footnotesize double\\
 \hline
 \tabincell{c}{\footnotesize Our results for Acc-GT}  & \footnotesize$\bO\left(\left(\frac{\gamma}{1-\sigma_{\gamma}}\right)^{1.5}\sqrt{\frac{L}{\mu}}\log\frac{1}{\epsilon}\right)$ & \footnotesize$\bO\left(\left(\frac{\gamma}{1-\sigma_{\gamma}}\right)^{1.5}\sqrt{\frac{L}{\mu}}\log\frac{1}{\epsilon}\right)$ & \footnotesize single\\
 \tabincell{c}{\footnotesize Our results for Acc-GT+\\ \footnotesize multiple consensus}  & \footnotesize$\bO\left(\sqrt{\frac{L}{\mu}}\log\frac{1}{\epsilon}\right)$ & \footnotesize$\bO\left(\frac{\gamma}{1-\sigma_{\gamma}}\sqrt{\frac{L}{\mu}}\log\frac{1}{\epsilon}\right)$ & \footnotesize double\\
\hline
\hline
\end{tabular}
\end{center}
\end{table}

\subsection{Contributions}

In this paper, we study accelerated gradient tracking over time-varying graphs with sharper complexities. We give our analysis over static graphs and time-varying graphs in a unified framework. The former scenario provides the basis and insights for the latter. Our contributions are summarized as follows:\footnotetext[1]{The method in \citep{Koloskova20} was designed for stochastic decentralized optimization. We recover the complexities for deterministic optimization by setting the variance of the stochastic gradient to be zero. On the other hand, $\overline\zeta=\frac{1}{m}\sum_{i=1}^m\|\nabla f_{(i)}(x^*)\|^2$.}
\begin{enumerate}
\item For time-varying graphs, our contributions include:
\begin{enumerate}
\item When the local objective functions are nonstrongly convex and smooth, we prove the $\bO(\frac{\gamma^2}{(1-\sigma_{\gamma})^2}\sqrt{\frac{L}{\epsilon}})$ complexities of communication rounds and gradient oracle calls for the practical single loop accelerated gradient tracking (Acc-GT). When combing with a multiple consensus subroutine, our complexities can be improved to $\bO(\frac{\gamma}{1-\sigma_{\gamma}}\sqrt{\frac{L}{\epsilon}})$ for communication rounds and $\bO(\sqrt{\frac{L}{\epsilon}})$ for gradient oracles. The number of our communication rounds is less than that of the state-of-the-art APM \citep{li-2018-pm} by a $\bO(\log\frac{1}{\epsilon})$ factor, while that of our gradient oracle calls is the same as that of APM.
\item When the local objective functions are strongly convex and smooth, we prove the $\bO((\frac{\gamma}{1-\sigma_{\gamma}})^{1.5}\sqrt{\frac{L}{\mu}}\log\frac{1}{\epsilon})$ communication round and gradient oracle complexities for the practical single loop Acc-GT. When combing with the multiple consensus subroutine, we can improve the communication round complexity to $\bO(\frac{\gamma}{1-\sigma_{\gamma}}\sqrt{\frac{L}{\mu}}\log\frac{1}{\epsilon})$ and the gradient oracle complexity to $\bO(\sqrt{\frac{L}{\mu}}\log\frac{1}{\epsilon})$. The number of our communication rounds is less than that of the state-of-the-art DAGD-C \citep{Alexander2221} by a $\bO(\log\frac{1}{\epsilon})$ factor, while our gradient oracle calls remain the same as that of DAGD-C.
\item To the best of our knowledge, this is the first time that the communication round upper bound with the optimal dependence on the precision $\epsilon$ and condition number $L/\mu$ is given for both nonstrongly convex and strongly convex problems. More importantly, they are established for a practical single loop algorithm.
\end{enumerate}
\item For static graphs as a special case, our contributions include:
\begin{enumerate}
\item When the local objective functions are nonstrongly convex and smooth, we prove the $\bO(\frac{1}{(1-\sigma)^2}\sqrt{\frac{L}{\epsilon}})$ complexities of communication rounds and gradient oracles for the practical single loop Acc-GT, which significantly improve over the existing $\bO(\frac{1}{\epsilon^{5/7}})$ ones originally proved in \citep{qu2017-2}. When combing with the Chebyshev acceleration, we can improve the complexities to $\bO(\sqrt{\frac{L}{\epsilon(1-\sigma)}})$ for communication rounds and $\bO(\sqrt{\frac{L}{\epsilon}})$ for gradient oracles, which exactly match the complexity lower bounds. As far as we know, we are the first to establish the optimal upper bounds for nonstrongly convex and smooth problems, which exactly match the corresponding lower bounds without hiding any poly-logarithmic factor.
\item When the local objective functions are strongly convex and smooth, we prove the $\bO(\sqrt{\frac{L}{\mu(1-\sigma)^3}}\log\frac{1}{\epsilon})$ communication round and gradient oracle complexities for the practical single loop Acc-GT, which improves over the existing $\bO((\frac{L}{\mu})^{5/7}\frac{1}{(1-\sigma)^{1.5}}\log\frac{1}{\epsilon})$ ones originally given in \citep{qu2017-2}. When combing with the Chebyshev acceleration, the complexities can be further improved to match the corresponding lower bounds and existing optimal upper bounds.
\end{enumerate}
\end{enumerate}

\section{Accelerated Gradient Tracking over Time-varying Graphs}\label{acc-gt-section}
We first review the gradient tracking and its accelerated variant, where the latter was only designed over static graphs, and then give our extensions of the accelerated gradient tracking to time-varying graphs with sharper complexities.

\subsection{Review of Gradient Tracking and Its Acceleration}
Gradient tracking \citep{shi2017,qu2017,aug-dgm,ranxin2018} keeps an auxiliary variable $s_{(i)}^k$ at each iteration for each agent $i$ to track the average of the gradients $\nabla f_{(j)}(x_{(j)}^k)$ for all $j=1,...,m$, such that if $x_{(i)}^k$ converges to some point $x^{\infty}$, $s_{(i)}^k$ converges to $\frac{1}{m}\sum_{i=1}^m\nabla f_{(i)}(x^{\infty})$. The auxiliary variable is updated recursively as follows:
\begin{eqnarray}
s_{(i)}^k=\sum_{j\in\N_{(i)}}W_{ij}s_{(j)}^{k-1}+\nabla f_{(i)}(x_{(i)}^k)-\nabla f_{(i)}(x_{(i)}^{k-1}),\notag
\end{eqnarray}
and each agent uses this auxiliary variable as the descent direction in the general distributed gradient descent framework:
\begin{eqnarray}
x_{(i)}^{k+1}=\sum_{j\in\N_{(i)}}W_{ij}x_{(j)}^k-\alpha s_{(i)}^k,\notag
\end{eqnarray}
where $\alpha$ is the step size. Writing gradient tracking in the compact form, it reads as follows:
\begin{subequations}
\begin{align}
&\s^k=W\s^{k-1}+\nabla f(\x^k)-\nabla f(\x^{k-1}),\notag\\
&\x^{k+1}=W\x^k-\alpha\s^k.\notag
\end{align}
\end{subequations}
Gradient tracking can be used over both static graphs and time-varying graphs \citep{shi2017}.

To further accelerate gradient tracking, \citet{qu2017-2} employed Nesterov's acceleration technique \citep{Nesterov-2004} and proposed the following accelerated distributed Nesterov gradient descent for nonstrongly convex problems:
\begin{subequations}
\begin{align}
&\y^k=\theta_k\z^k+(1-\theta_k)\x^k,\label{qu-s1}\\
&\s^k=\W\s^{k-1}+\nabla f(\y^k)-\nabla f(\y^{k-1}),\label{qu-s2}\\
&\x^{k+1}=W\y^k-\alpha\s^k\label{qu-s3},\\
&\z^{k+1}=W\z^k-\frac{\alpha}{\theta_k}\s^k.\label{qu-s4}
\end{align}
\end{subequations}
It can be checked that step (\ref{qu-s3}) is equivalent to the following one:
\begin{equation}
\x^{k+1}=\theta_k\z^{k+1}+(1-\theta_k)\W\x^k.\notag
\end{equation}
When strong convexity is assumed, \citet{qu2017-2} fixed $\theta_k$ at each iteration and replaced steps (\ref{qu-s1}) and (\ref{qu-s4}) by the following two steps:
\begin{eqnarray}\notag
\y^k=\frac{\x^k+\theta\z^k}{1+\theta},\qquad\z^{k+1}=(1-\theta)W\z^k+\theta W\y^k-\frac{\alpha}{\theta}\s^k.
\end{eqnarray}
The main idea behind the development of the above accelerated algorithms is to relate it to the inexact accelerated gradient descent \citep{devolder2014first} by taking average of the local variables over all $i=1,...,m$. See Section \ref{sec:proof1} for the details. Tables \ref{table-comp} and \ref{table-comp2} list the complexities of gradient tracking and its accelerated variant.
\subsection{Extension of Accelerated Gradient Tracking to Time-varying Graphs}\label{sec:acttv}
\begin{algorithm}[t]
   \caption{Accelerated Gradient Tracking (Acc-GT)}
   \label{accgt}
\begin{algorithmic}
   \STATE Initialize: $x_{(i)}^0=y_{(i)}^0=z_{(i)}^0=x_{int}$, $s_{(i)}^0=\nabla f_{(i)}(y_{(i)}^0)$, $z_{(i)}^1=\sum_{j\in\N_{(i)}^0}\W_{ij}^0z_{(j)}^0-\frac{\alpha}{\theta_0+\mu\alpha}s_{(i)}^0$, and $x_{(i)}^1=\theta_0z_{(i)}^1+(1-\theta_0)\sum_{j\in\N_{(i)}^0}\W_{ij}^0x_{(j)}^0$.
   \FOR{$k=1,2,...$}
   \STATE
   \vspace*{-0.5cm}\begin{eqnarray}
   \begin{aligned}\notag
   &y_{(i)}^k=\theta_kz_{(i)}^k+(1-\theta_k)x_{(i)}^k,\\
   &s_{(i)}^k=\sum_{j\in\N_{(i)}^k}\W_{ij}^ks_{(j)}^{k-1}+\nabla f_{(i)}(y_{(i)}^k)-\nabla f_{(i)}(y_{(i)}^{k-1}),\\
   &z_{(i)}^{k+1}=\frac{1}{1+\frac{\mu\alpha}{\theta_k}}\left(\sum_{j\in\N_{(i)}^k}\W_{ij}^k\left(\frac{\mu\alpha}{\theta_k}y_{(j)}^k+z_{(j)}^k\right)-\frac{\alpha}{\theta_k}s_{(i)}^k\right),\\
   &x_{(i)}^{k+1}=\theta_kz_{(i)}^{k+1}+(1-\theta_k)\sum_{j\in\N_{(i)}^k}\W_{ij}^kx_{(j)}^k.
   \end{aligned}
   \end{eqnarray}
   \ENDFOR
\end{algorithmic}
\end{algorithm}

In this paper, we study the following accelerated gradient tracking with time-varying gossip matrices:
\begin{subequations}
\begin{align}
&\y^k=\theta_k\z^k+(1-\theta_k)\x^k,\label{alg-tv-s1}\\
&\s^k=\W^k\s^{k-1}+\nabla f(\y^k)-\nabla f(\y^{k-1}),\label{alg-tv-s2}\\
&\z^{k+1}=\frac{1}{1+\frac{\mu\alpha}{\theta_k}}\left(\W^k\left(\frac{\mu\alpha}{\theta_k}\y^k+\z^k\right)-\frac{\alpha}{\theta_k}\s^k\right),\label{alg-tv-s3}\\
&\x^{k+1}=\theta_k\z^{k+1}+(1-\theta_k)\W^k\x^k,\label{alg-tv-s4}
\end{align}
\end{subequations}
where we initialize $\x^0$ such that $\Pi\x^0=0$. We give the specific descriptions of the method in Algorithm \ref{accgt} in a distributed way. Step (\ref{alg-tv-s2}) is the standard gradient tracking, while steps (\ref{alg-tv-s1}), (\ref{alg-tv-s3}), and (\ref{alg-tv-s4}) come from Nesterov's classical accelerated gradient descent \citep{Nesterov-2004}, except that one round of consensus communication is performed by multiplying the aggregate variables with a gossip matrix. We see that algorithm (\ref{alg-tv-s1})-(\ref{alg-tv-s4}) is equivalent to (\ref{qu-s1})-(\ref{qu-s4}) when the gossip matrix is fixed and $\mu=0$. However, when $\mu>0$, it is not equivalent to the method proposed in \citep{qu2017-2}. In fact, Nesterov's accelerated gradient methods have several variants, and we choose the one in the form of (\ref{alg-tv-s1})-(\ref{alg-tv-s4}) due to its simple convergence proof.

We follow the proof idea in \citep{Jakovetic-2014,qu2017-2} to rewrite the distributed algorithm in the form of inexact accelerated gradient descent. However, we use a different proof framework from \citep{qu2017-2} with much simpler proofs, and give sharper complexities. See Remark \ref{remark3} for the differences and the reasons of the convergence rates improvement. On the other hand, for time-varying graphs, unlike the classical analysis relying on the small gain theorem \citep{shi2017}, we construct a different way to bound the consensus errors such that the proof framework over static graphs can be extended to time-varying graphs. See the proof of Lemma \ref{lemma3} and the remark following it. Our proof technique may shed new light to decentralized optimization over time-varying graphs, and gives an alternative to the small gain theorem. There are two advantages of our proof technique: it can be embedded into many algorithm frameworks from the perspective of error analysis, and it can be applied to both strongly convex and nonstrongly convex problems, while the small gain theorem only applies to strongly convex ones.

Our main technical results concerning the convergence rates of the accelerated gradient tracking are summarized in the following two theorems for nonstrongly convex and strongly convex problems, respectively.
\begin{theorem}\label{theorem3}
Suppose that Assumption \ref{assumption_f} holds with $\mu=0$ and Assumption \ref{assumption_w_tv} holds for the sequence $\{\W^k\}_{k=0}^{T\gamma}$. Let the sequence $\{\theta_k\}_{k=0}^{T\gamma}$ satisfy $\frac{1-\theta_k}{\theta_k^2}=\frac{1}{\theta_{k-1}^2}$ with $\theta_0=1$, let $\alpha\leq\frac{(1-\sigma_{\gamma})^4}{21675L\gamma^4}$. Then for algorithm (\ref{alg-tv-s1})-(\ref{alg-tv-s4}), we have for any $T\geq 1$,
\begin{eqnarray}
F(\overline x^{T\gamma+1})-F(x^*)\leq \frac{2C}{\alpha(T\gamma+1)^2},\notag
\end{eqnarray}
and
\begin{eqnarray}
\frac{1}{m}\|\Pi\x^{T\gamma}\|^2\leq\frac{9C}{\alpha L(T\gamma+1)^2},\notag
\end{eqnarray}
where $C=\|\overline z^0-x^*\|^2+\frac{\alpha(1-\sigma_{\gamma})}{10mL\gamma}\max_{r=0,...,\gamma}\|\Pi\s^r\|^2$.
\end{theorem}

\begin{theorem}\label{theorem4}
Suppose that Assumption \ref{assumption_f} holds with $\mu>0$ and Assumption \ref{assumption_w_tv} holds for the sequences $\{\W^k\}_{k=0}^{T\gamma}$. Let $\alpha\leq\frac{(1-\sigma_{\gamma})^3}{4244L\gamma^3}$ and $\theta_k\equiv\theta=\frac{\sqrt{\mu\alpha}}{2}$. Then for algorithm (\ref{alg-tv-s1})-(\ref{alg-tv-s4}), we have for any $T\geq 1$,
\begin{eqnarray}
F(\overline x^{T\gamma+1})-F(x^*)+\left(\frac{\theta^2}{2\alpha}+\frac{\mu\theta}{2}\right)\|\overline z^{T\gamma+1}-x^*\|^2\leq (1-\theta)^{T\gamma+1}C,\notag
\end{eqnarray}
and
\begin{eqnarray}
\frac{1}{m}\|\Pi\x^{T\gamma}\|^2\leq (1-\theta)^{T\gamma+1}\frac{4C}{L},\notag
\end{eqnarray}
where $C=F(\overline x^0)-F(x^*)+\left(\frac{\theta^2}{2\alpha}+\frac{\mu\theta}{2}\right)\|\overline z^0-x^*\|^2+\frac{1-\sigma_{\gamma}}{49mL\gamma(1-\theta)}\M_{\s}^{\gamma,\gamma}+\frac{1459L\gamma^3}{m(1-\theta)(1-\sigma_{\gamma})^3}\M_{\z}^{\gamma,\gamma}+\frac{6.6L\gamma}{m(1-\theta)(1-\sigma_{\gamma})}\M_{\x}^{\gamma,\gamma}$, $\M_{\s}^{\gamma,\gamma}=\max_{r=1,...,\gamma}\|\Pi\s^r\|^2$, and similarly for $\M_{\z}^{\gamma,\gamma}$ and $\M_{\x}^{\gamma,\gamma}$.
\end{theorem}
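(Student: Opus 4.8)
The plan is to reuse the two-layer architecture behind Theorem~\ref{theorem2}: first reduce the \emph{averaged} dynamics to an inexact accelerated gradient descent on $F$, and then control the consensus error that feeds the inexact scheme. The essential new difficulty is that, over a $\gamma$-connected sequence, the mixing matrices contract the consensus error only once every $\gamma$ iterations (via (\ref{tau_consensus-contraction})), being merely non-expansive in between (via (\ref{tau_consensus-contraction2})); so the consensus layer must be analyzed in blocks of $\gamma$ steps, which is why the statement is phrased at iteration $T\gamma$.

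\emph{Reduction and Lyapunov descent.} Because $\W_1^k,\W_2^k,\W_3^k$ are doubly stochastic, multiplying (\ref{alg-tv-s1})--(\ref{alg-tv-s4}) by $\frac1m\1^T$ removes the mixing, and together with $\overline s^0=\frac1m\sum_i\nabla f_{(i)}(y_{(i)}^0)$ this gives the tracking identity $\overline s^k=\frac1m\sum_i\nabla f_{(i)}(y_{(i)}^k)$ for every $k$, so that $\overline x^k,\overline y^k,\overline z^k$ run exactly Nesterov's constant-momentum scheme for $F$ but driven by $\overline s^k$ rather than $\nabla F(\overline y^k)$. Writing $\overline s^k=\nabla F(\overline y^k)+e^k$ and using $L$-smoothness with Cauchy--Schwarz gives $\|e^k\|\le\frac{L}{\sqrt m}\|\Pi\y^k\|$, so the network enters only through $\|\Pi\y^k\|$. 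With the potential $\Phi_k=F(\overline x^k)-F(x^*)+\big(\frac{\theta^2}{2\alpha}+\frac{\mu\theta}{2}\big)\|\overline z^k-x^*\|^2$, the estimate-sequence argument for inexact accelerated gradient descent (with the cross term $\<e^k,\overline z^{k+1}-x^*\>$ absorbed by Young's inequality and $\theta=\frac{\sqrt{\mu\alpha}}2$) yields a one-step bound of the form $\Phi_{k+1}\le(1-\theta)\Phi_k+\frac{c\alpha L^2}{m}\|\Pi\y^k\|^2$ for an absolute constant $c$: a clean geometric contraction forced by the consensus error of $\y$.

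\emph{Consensus error over $\gamma$-blocks (the main obstacle).} Applying $\Pi$ to (\ref{alg-tv-s2})--(\ref{alg-tv-s4}) gives a coupled system for $\Pi\s^k,\Pi\z^k,\Pi\x^k$: the tracking step injects fresh error $\|\Pi(\nabla f(\y^k)-\nabla f(\y^{k-1}))\|\le L\|\y^k-\y^{k-1}\|$ at every iteration, while mixing contracts only once per block. Unrolling (\ref{alg-tv-s2}) across one block and invoking $W_1^{k,\gamma}$ gives the prototype $\|\Pi\s^k\|\le\sigma_\gamma\|\Pi\s^{k-\gamma}\|+L\sum_{j=0}^{\gamma-1}\|\y^{k-j}-\y^{k-j-1}\|$, and analogous $\gamma$-block recursions hold for $\Pi\z$ and $\Pi\x$, the former also picking up $\Pi\s$ through (\ref{alg-tv-s3}) and the latter picking up $\Pi\z$ through (\ref{alg-tv-s4}). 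I would collect $\M_\s^{k,\gamma}=\max_{k-\gamma<r\le k}\|\Pi\s^r\|^2$ and likewise $\M_\z^{k,\gamma},\M_\x^{k,\gamma}$ into a single vector and show it obeys a block recursion $\M^{k+\gamma}\le\rho\,\M^k+c'\gamma^2\alpha^2L^2\,S^k$, where $S^k=\sum_{r}\|\y^r-\y^{r-1}\|^2$ over the block is split into an averaged part bounded by $\Phi$ and a deviation part bounded by the $\M$'s. The step-size ceiling $\alpha\le\frac{(1-\sigma_\gamma)^3}{4244L\gamma^3}$ is precisely what makes $\rho\le(1-\theta)^\gamma$ and the self-injection coefficient strictly below $1$, closing the recursion. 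I expect the real work to be here: tracking the three coupled deviation sequences through the distinct matrices $\W_1^k,\W_2^k,\W_3^k$, handling the non-commuting $\gamma$-fold products, and keeping every constant in $\gamma$ and $1-\sigma_\gamma$ explicit so that the stated thresholds come out.

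\emph{Coupling and unrolling.} Finally I would merge the per-step contraction of $\Phi$ (accumulated over one block into $\Phi_{(T+1)\gamma}\le(1-\theta)^\gamma\Phi_{T\gamma}+\text{forcing}$) with the block recursion for $\M$ into a single quantity $\Psi_T=\Phi_{T\gamma}+w\,\M^{T\gamma}$, choosing the weight vector $w$ so that both the forcing $\frac{L^2}{m}\|\Pi\y^k\|^2$ in the $\Phi$-layer and the $\Phi$-dependent injection in the $\M$-layer are dominated, which gives $\Psi_{T}\le(1-\theta)^\gamma\Psi_{T-1}$. Unrolling over the $T$ blocks, with the first block contributing exactly the initial deviations $\M_\s^{\gamma,\gamma},\M_\z^{\gamma,\gamma},\M_\x^{\gamma,\gamma}$ that appear in $C$, yields $F(\overline x^{T\gamma+1})-F(x^*)+\big(\frac{\theta^2}{2\alpha}+\frac{\mu\theta}{2}\big)\|\overline z^{T\gamma+1}-x^*\|^2\le(1-\theta)^{T\gamma+1}C$; the bound $\frac1m\|\Pi\x^{T\gamma}\|^2\le(1-\theta)^{T\gamma+1}\frac{4C}{L}$ then follows because $\|\Pi\x^{T\gamma}\|^2$ is one of the geometrically decaying components controlled by $\Psi_T$.
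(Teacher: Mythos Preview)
Your block-level architecture is right, but the way you close the loop between the two layers misses the paper's key device and, as written, does not go through. In the averaged layer you collapse to $\Phi_{k+1}\le(1-\theta)\Phi_k+\tfrac{cL}{m}\|\Pi\y^k\|^2$, discarding the negative terms $-(1-\theta)D_f(\overline x^k,\y^k)$ and $-\big(\tfrac{\theta^2}{2\alpha}-\tfrac{L\theta^2}{2}\big)\|\overline z^{k+1}-\overline z^k\|^2$ that the one-step inequality (\ref{cont2}) actually produces. In the consensus layer you bound the injection by the Lipschitz estimate $L\|\y^{k+1}-\y^k\|$ and then propose to control its averaged part $m\|\overline y^{k+1}-\overline y^k\|^2$ by the Lyapunov $\Phi$ itself. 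That last step is the gap: the only route from $\|\overline y^{k+1}-\overline y^k\|$ back to $\Phi$ passes through $\|\overline z^{k+1}-\overline z^k\|\lesssim\tfrac{\alpha}{\theta}\|\overline s^k\|$ and $\|\overline s^k\|^2\lesssim L(F(\overline y^k)-F^*)$, and relating $F(\overline y^k)-F^*$ (or $F(\overline z^k)-F^*$) to $\Phi_k$ via smoothness costs an extra factor $L/\mu$, which destroys the accelerated dependence on the condition number. (Also, the forcing coefficient in your block recursion should be $\tfrac{\gamma}{1-\sigma_\gamma}$ times the injection, with no $\alpha^2$ in the $\s$-component; $\alpha^2$ enters only in the $\z$-recursion (\ref{cont21}).)

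What the paper does instead is to \emph{keep} $D_f(\overline x^k,\y^k)$ and $\|\overline z^{k+1}-\overline z^k\|^2$ as explicit negative slack (term (d) in (\ref{cont4})), and to bound the gradient-difference injection not by Lipschitz but by cocoercivity, $\|\nabla f_{(i)}(y_{(i)}^{k+1})-\nabla f_{(i)}(\overline x^{k+1})\|^2\le 2L\big(f_{(i)}(\overline x^{k+1})-f_{(i)}(y_{(i)}^{k+1})-\langle\nabla f_{(i)}(y_{(i)}^{k+1}),\overline x^{k+1}-y_{(i)}^{k+1}\rangle\big)$. After the split in (\ref{cont15}) the injection becomes (up to the $\|\Pi\y^k\|^2$ self-term) exactly the quantity $\theta^2\Phi^k$ of (\ref{cont18}), built from $D_f$ and $\|\overline z^{k+1}-\overline z^k\|^2$. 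The $\gamma$-block recursion (Lemmas~\ref{lemma1} and \ref{lemma3}) and the weighted-sum estimate (\ref{cont31}) then give $\sum_k\frac{L}{2m(1-\theta)^{k+1}}\|\Pi\y^k\|^2\le\text{const}+\text{coeff}\cdot\sum_r\frac{\Phi^r}{(1-\theta)^{r+1}}$, where the right-hand sum is a \emph{smaller} multiple of the same sum stored as negative slack in (\ref{cont4}). They cancel directly---no joint $\Psi$-Lyapunov is needed---and the step-size ceiling $\alpha\le\frac{(1-\sigma_\gamma)^3}{4244L\gamma^3}$ together with $\tau=\tfrac74$ is precisely what makes the absorption coefficient at most $\tfrac12$.
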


When the local objectives are nonstrongly convex, we see from Theorem \ref{theorem3} that algorithm (\ref{alg-tv-s1})-(\ref{alg-tv-s4}) needs $\bO((\frac{\gamma}{1-\sigma_{\gamma}})^2\sqrt{\frac{LC}{\epsilon}})$ communication rounds and gradient oracle calls to find an $\epsilon$-optimal averaged solution (see Remark \ref{complexity-remark}). When strong convexity is assumed, we see from Theorem \ref{theorem4} that both the communication round and gradient oracle complexities are $\bO((\frac{\gamma}{1-\sigma_{\gamma}})^{1.5}\sqrt{\frac{L}{\mu}}\log\frac{1}{\epsilon})$. Our complexities have the optimal dependence on the precision $\epsilon$ and the condition number $L/\mu$, matching that of the classical centralized accelerated gradient method. As illustrated in Table \ref{table-comp2}, our communication round complexities improve over the state-of-the-art APM \citep{li-2018-pm} and DAGD-C \citep{Alexander2221} on the dependence of $\epsilon$ since they have an additional $\bO(\log\frac{1}{\epsilon})$ factor. However, our dependence on $\frac{\gamma}{1-\sigma_{\gamma}}$ is not state-of-the-art. We will improve it in Section \ref{sec:subroutine}.

\begin{remark}
In order to establish the proof, we use very small stepsizes with huge constants in the theorems, which is impractical. We suggest to tune the best stepsize in practice, rather than the ones used in the theorems.
\end{remark}
\begin{remark}
We measure the convergence rates at the averaged solution, which can be obtained by an additional consensus average routine $\u^{t+1}=\W^t\u^t$ initialized at $\u^0=\x^{T\gamma+1}$, and $\bO(\frac{\gamma}{1-\sigma_{\gamma}}\log\frac{1}{\epsilon})$ rounds of communications are enough. So the total complexities are $\bO((\frac{\gamma}{1-\sigma_{\gamma}})^2\sqrt{\frac{LC}{\epsilon}})+\bO(\frac{\gamma}{1-\sigma_{\gamma}}\log\frac{1}{\epsilon})$ and $\bO((\frac{\gamma}{1-\sigma_{\gamma}})^{1.5}\sqrt{\frac{L}{\mu}}\log\frac{1}{\epsilon})+\bO(\frac{\gamma}{1-\sigma_{\gamma}}\log\frac{1}{\epsilon})$ for nonstrongly convex and strongly convex problems, respectively, and they are dominated by the first parts.
\end{remark}
\begin{remark}\label{remark1}
For nonstrongly convex problems, we can also prove the convergence rate measured at the point $x_{(i)}^{T\gamma+1}$ for any $i$:
\begin{eqnarray}
\begin{aligned}\notag
F(x_{(i)}^{T\gamma+1})-F(x^*)\leq \frac{2C}{\alpha(T\gamma+1)^2}\max\left\{\frac{\sqrt{m}(1-\sigma_{\gamma})}{L\alpha\gamma},8m\right\}.
\end{aligned}
\end{eqnarray}
However, the complexities increase to $\bO(\max\{\sqrt{m},\sqrt[4]{m}(\frac{\gamma}{1-\sigma_{\gamma}})^{1.5}\}(\frac{\gamma}{1-\sigma_{\gamma}})^2\sqrt{\frac{L}{\epsilon}})$. For strongly convex problems, the complexities stay the same no matter measured at $\overline x^{T\gamma+1}$ or $x_{(i)}^{T\gamma+1}$ because the additional terms, such as $\max\{\sqrt{m},\sqrt[4]{m}(\frac{\gamma}{1-\sigma_{\gamma}})^{1.5}\}$ in the nonstrongly convex case, appear in the constant $C'$ in $\bO((\frac{\gamma}{1-\sigma_{\gamma}})^{1.5}\sqrt{\frac{L}{\mu}}\log\frac{C'}{\epsilon})$.
\end{remark}

\begin{remark}
In Theorems \ref{theorem3} and \ref{theorem4}, we measure the convergence rates at the $(T\gamma+1)$th iteration for simplicity. In fact, the same rates hold for any $K=T\gamma+r$ with $1\leq r\leq\gamma$ by regarding the $(r-1)$th iteration as the virtual initialization, which only influences the constant $C$ in Theorems \ref{theorem3} and \ref{theorem4}. In addition, since $\theta_{r-1}<1$, the constant $C$ in Theorem \ref{theorem3} contains an additional term $\frac{\alpha(1-\theta_{r-1})}{\theta_{r-1}^2}(F(\overline x^{r-1})-F(x^*))$.
\end{remark}


\begin{remark}\label{remark2}
Due to the physical constraints such as the battery dies, the device shuts down, or the WiFi network is unavailable, the agents may not be active all the time. Most literature let the agents wait and use the old iterates when rejoining the network. Alternatively, we can formulate this case by local updates \citep{Stich19,Koloskova20} and use our analysis framework to ensure the convergence. Mathematically, letting $\W^k_{ii}=1$ and $\W^k_{ij}=0$ for all $j\neq i$ and $k=t+1,t+2,..., t'$, which means that agent $i$ drops out from the communication network during the time $[t+1,t']$, algorithm (\ref{alg-tv-s1})-(\ref{alg-tv-s4}) reduces to the following steps for agent $i$ at iterations $k=t+1,t+2,..., t'$:
\begin{subequations}
\begin{align}
&y_{(i)}^k=\theta_k z_{(i)}^k+(1-\theta_k)x_{(i)}^k,\label{loca-s1}\\
&s_{(i)}^k=s_{(i)}^{k-1}+\nabla f_{(i)}(y_{(i)}^k)-\nabla f_{(i)}(y_{(i)}^{k-1}),\label{loca-s2}\\
&z_{(i)}^{k+1}=\frac{1}{1+\frac{\mu\alpha}{\theta_k}}\left(\left(\frac{\mu\alpha}{\theta_k}y_{(i)}^k+z_{(i)}^k\right)-\frac{\alpha}{\theta_k}s_{(i)}^k\right),\label{loca-s3}\\
&x_{(i)}^{k+1}=\theta_kz_{(i)}^{k+1}+(1-\theta_k)x_{(i)}^k,\label{loca-s4}
\end{align}
\end{subequations}
which are a serious of local updates without communications. When joining the network again, we require agent $i$ to make up the delayed computations by performing (\ref{loca-s1})-(\ref{loca-s4}) for $t'-t$ iterations. Note that (\ref{loca-s1})-(\ref{loca-s4}) has much lower cost than the same number of iterations (\ref{alg-tv-s1})-(\ref{alg-tv-s4}) because the CPU speed is much faster than the communication speed over TCP/IP or the slow WiFi \citep{Lam-2017}.
\end{remark}

\subsection{Special Cases over Static Graphs}\label{sec:actsg}

When we fix $\W^k=\W$, algorithm (\ref{alg-tv-s1})-(\ref{alg-tv-s4}) can be applied to static graphs. As a special case of Theorems \ref{theorem3} and \ref{theorem4}, we have the following theorems over static graphs.
\begin{theorem}\label{theorem1}
Suppose that Assumptions \ref{assumption_f} and \ref{assumption_w} hold with connected graphs and $\mu=0$. Let the sequence $\{\theta_k\}_{k=0}^K$ satisfy $\frac{1-\theta_k}{\theta_k^2}=\frac{1}{\theta_{k-1}^2}$ with $\theta_0=1$, let $\alpha\leq\frac{(1-\sigma)^4}{537L}$. Then for algorithm (\ref{alg-tv-s1})-(\ref{alg-tv-s4}) with fixed gossip matrix $\W$, we have for any $K\geq 1$
\begin{eqnarray}
F(\overline x^{K+1})-F(x^*)\leq \frac{1}{\alpha(K+1)^2}\left(2\|\overline z^0-x^*\|^2+\frac{\alpha(1-\sigma)}{2mL}\|\Pi\s^0\|^2\right),\notag
\end{eqnarray}
and
\begin{eqnarray}
\frac{1}{m}\|\Pi\x^K\|^2\leq\frac{1}{\alpha L(K+1)^2}\left(5\|\overline z^0-x^*\|^2+\frac{9\alpha(1-\sigma)}{4mL}\|\Pi\s^0\|^2\right).\notag
\end{eqnarray}
\end{theorem}

\begin{theorem}\label{theorem2}
Suppose that Assumptions \ref{assumption_f} and \ref{assumption_w} hold with connected graphs and $\mu>0$. Let $\alpha\leq\frac{(1-\sigma)^3}{119L}$ and $\theta_k\equiv\theta=\frac{\sqrt{\mu\alpha}}{2}$. Then for algorithm (\ref{alg-tv-s1})-(\ref{alg-tv-s4}) with fixed gossip matrix $\W$, we have for any $K\geq 1$
\begin{eqnarray}
F(\overline x^{K+1})-F(x^*)+\left(\frac{\theta^2}{2\alpha}+\frac{\mu\theta}{2}\right)\|\overline z^{K+1}-x^*\|^2\leq (1-\theta)^{K+1}C,\notag
\end{eqnarray}
and
\begin{eqnarray}
\frac{1}{m}\|\Pi\x^K\|^2\leq (1-\theta)^{K+1}\frac{4C}{L},\notag
\end{eqnarray}
where $C=F(\overline x^0)-F(x^*)+\left(\frac{\theta^2}{2\alpha}+\frac{\mu\theta}{2}\right)\|\overline z^0-x^*\|^2+\frac{4(1-\sigma)}{59mL(1-\theta)}\|\Pi\s^0\|^2$.
\end{theorem}
The above theorems give the $\bO(\frac{1}{(1-\sigma)^2}\sqrt{\frac{L}{\epsilon}})$ and $\bO(\sqrt{\frac{L}{\mu(1-\sigma)^3}}\log\frac{1}{\epsilon})$ convergence rates for nonstrongly convex and strongly convex problems, respectively. As illustrated in Table \ref{table-comp}, our convergence rates significantly improve over the ones of $\bO(\frac{1}{\epsilon^{5/7}})$ and $\bO((\frac{L}{\mu})^{5/7}\frac{1}{(1-\sigma)^{1.5}}\log\frac{1}{\epsilon})$, respectively, which were originally proved in \citep{qu2017-2}.
\begin{remark}
In our Theorem \ref{theorem2}, we require each $f_{(i)}(x)$ to be strongly convex. Some literatures study the weaker assumptions where only $F(x)$ is required to be strongly convex and each $f_{(i)}$ can be convex and smooth. \citet{Sun-2022} established the $\bO((\frac{L}{\mu(1-\sigma)})^2\log\frac{1}{\epsilon})$ complexity for gradient tracking over general undirected graphs. As a comparison, when each $f_{(i)}$ is strongly convex, the state-of-the-art complexity of gradient tracking is $\bO((\frac{L}{\mu}+\frac{1}{(1-\sigma)^2})\log\frac{1}{\epsilon})$ \citep{sulaiman2020}. Currently, it is unclear how to combine our techniques with those in \citep{Sun-2022} and we conjecture that the complexity would be higher than the one given in Theorem \ref{theorem2}. On the other hand, for some algorithms relying on multi-consensus \citep{mudag20,mudag2-20}, the weaker assumptions have no influence on the complexity.
\end{remark}

\subsection{Improved Dependence on the Network Connectivity Constants}\label{sec:subroutine}

As shown in Tables \ref{table-comp} and \ref{table-comp2}, the dependence on the network connectivity constants in our complexities is not optimal. We improve it over static graphs and time-varying graphs in the next two sections, respectively.
\subsubsection{Chebyshev Acceleration over Static Graphs}

Chebyshev acceleration was first used to accelerate distributed algorithms by \citet{dasent}, and it has become a standard technique now. Define the Chebyshev polynomials as $T_0(x)=1$, $T_1(x)=x$, and $T_{k+1}(x)=2xT_k(x)-T_{k-1}(x)$ for all $k\geq 1$. For symmetric $\W$, define $A=\I-\W$ with $2\geq\lambda_1\geq\lambda_2\geq ...\geq \lambda_{m-1}>\lambda_m=0$ being its eigenvalues. We know $\lambda_{m-1}=1-\sigma$. Define $\nu=\frac{\lambda_{m-1}}{\lambda_1}$, $c_1=\frac{1-\sqrt{\nu}}{1+\sqrt{\nu}}$, $c_2=\frac{1+\nu}{1-\nu}$, $c_3=\frac{2}{\lambda_1+\lambda_{m-1}}$, and $P_t(x)=1-\frac{T_t(c_2(1-x))}{T_t(c_2)}$. Then, $P_t(c_3A)$ is a symmetric matrix satisfying $P_t(c_3A)\1=0$ with its spectrum in $[1-\frac{2c_1^t}{1+c_1^{2t}},1+\frac{2c_1^t}{1+c_1^{2t}}]\cup 0$ \citep{Auzinger}. Let $t=\frac{1}{\sqrt{\nu}}$ so to have $c_1^t\leq \frac{1}{e}$ and $[1-\frac{2c_1^t}{1+c_1^{2t}},1+\frac{2c_1^t}{1+c_1^{2t}}]\subseteq [0.35,1.65]$. Thus, we can replace the fixed gossip matrix $W$ in algorithm (\ref{alg-tv-s1})-(\ref{alg-tv-s4}) by $\I-P_t(c_3A)$ because its second largest singular value $\sigma'$ satisfies $\sigma'\leq 0.65$, which is independent of $1-\sigma$. From Theorems \ref{theorem1} and \ref{theorem2} with $\sigma$ replaced by $\sigma'$, we see that the algorithm needs $\bO(\sqrt{\frac{L}{\epsilon}})$ iterations for nonstrongly convex problems and $\bO(\sqrt{\frac{L}{\mu}}\log\frac{1}{\epsilon})$ iterations for strongly convex ones to find an $\epsilon$-optimal solution, which corresponds to the gradient oracle complexity. On the other hand, we can compute $(\I-P_t(c_3A))\x$ by the following procedure \citep{dasent}:
\begin{algorithmic}
   \STATE Input: $\x$. Initialize: $a_0=1$, $a_1=c_2$, $\z^0=\x$, $\z^1=c_2(\I-c_3A)\x$.
   \FOR{$s=1,2,...,t-1$}
   \STATE $a_{s+1}=2c_2a_s-a_{s-1}$,
   \STATE $\z^{s+1}=2c_2(\I-c_3A)\z^s-\z^{s-1}$.
   \ENDFOR
   \STATE Output: $(\I-P_t(c_3A))\x=\frac{\z^t}{a_t}$.
\end{algorithmic}
Thus, the communication round complexities for nonstrongly convex and strongly convex problems are $\bO(\sqrt{\frac{L}{\epsilon(1-\sigma)}})$ and  $\bO(\sqrt{\frac{L}{\mu(1-\sigma)}}\log\frac{1}{\epsilon})$, respectively.

\begin{corollary}
Under the settings of Theorem \ref{theorem1} with symmetric and fixed gossip matrix $\W$, algorithm (\ref{alg-tv-s1})-(\ref{alg-tv-s4}) with Chebyshev acceleration requires time of $\bO(\sqrt{\frac{L}{\epsilon(1-\sigma)}})$ communication rounds and $\bO(\sqrt{\frac{L}{\epsilon}})$ gradient oracle calls to find an $\epsilon$-optimal averaged solution such that $F(\overline x)-F(x^*)\leq\epsilon$.
\end{corollary}
\begin{corollary}
Under the settings of Theorem \ref{theorem2} with symmetric and fixed gossip matrix $\W$, algorithm (\ref{alg-tv-s1})-(\ref{alg-tv-s4}) with Chebyshev acceleration requires time of $\bO(\sqrt{\frac{L}{\mu(1-\sigma)}}\log\frac{1}{\epsilon})$ communication rounds and $\bO(\sqrt{\frac{L}{\mu}}\log\frac{1}{\epsilon})$ gradient oracle calls to find an $\epsilon$-optimal averaged solution such that $F(\overline x)-F(x^*)\leq\epsilon$.
\end{corollary}

\subsubsection{Multiple Consensus over Time-varying Graphs}

Although Chebyshev acceleration has been widely used in decentralized optimization, it is unclear how to extend it to time-varying graphs. In this section, we use a multiple consensus subroutine as an alternative to improve the dependence on the network connectivity constants. Motivated by Chebyshev acceleration, our idea is to replace $\W^k$ in (\ref{alg-tv-s1})-(\ref{alg-tv-s4}) by virtual gossip matrices $\W^{k,\zeta}$ with carefully designed $\zeta$ such that
\begin{eqnarray}
\|\Pi\W^{k,\zeta}\x\|\leq\frac{1}{e}\|\Pi\x\|,\quad r=1,2,3.\notag
\end{eqnarray}
Here, $\frac{1}{e}$ can be replaced by any constant not close to 1. Then, it can be regarded as running the resultant algorithm over time-varying graphs with each graph instance being connected at every time, and moreover, $\sigma=\frac{1}{e}$. Note that we do not require the symmetry of the gossip matrices in Assumptions \ref{assumption_w} and \ref{assumption_w_tv}, thus our theorems apply to the virtual gossip matrices $\W^{k,\zeta}$. From Theorems \ref{theorem3} and \ref{theorem4} with $\gamma=1$ and $\sigma_{\gamma}=\frac{1}{e}$, we see that $\bO(\sqrt{\frac{L}{\epsilon}})$ iterations for nonstrongly convex problems and $\bO(\sqrt{\frac{L}{\mu}}\log\frac{1}{\epsilon})$ for strongly convex problems suffice to find an $\epsilon$-optimal solution, which correspond to the gradient oracle complexity. Next, we consider the communication round complexity. Letting $\zeta=\lceil\frac{\gamma}{1-\sigma_{\gamma}}\rceil$, it follows from (\ref{tau_consensus-contraction}) that
\begin{eqnarray}
\|\Pi \W^{k,\zeta}\x\|\leq \sigma_{\gamma}^{\frac{1}{1-\sigma_{\gamma}}}\|\Pi\x\|=\left(1-\left(1-\sigma_{\gamma}\right)\right)^{\frac{1}{1-\sigma_{\gamma}}}\|\Pi\x\|\leq\frac{1}{e}\|\Pi\x\|,\notag
\end{eqnarray}
where we use the fact that $(1-x)^{1/x}\leq 1/e$ for any $x\in (0,1)$. Since $\W^{k,\zeta}\x$ can be implemented by the multiple consensus subroutine
\begin{eqnarray}
\u^{t+1}=\W^t\u^t\notag
\end{eqnarray}
with $\zeta$ rounds of communications initialized at $\u^0=\x$, the communication round complexity is $\bO(\frac{\gamma}{1-\sigma_{\gamma}}\sqrt{\frac{L}{\epsilon}})$ for nonstrongly convex problems and $\bO(\frac{\gamma}{1-\sigma_{\gamma}}\sqrt{\frac{L}{\mu}}\log\frac{1}{\epsilon})$ for strongly convex ones, respectively.

\begin{corollary}
Under the settings of Theorem \ref{theorem3}, algorithm (\ref{alg-tv-s1})-(\ref{alg-tv-s4}) combined with the multiple consensus subroutine requires time of $\bO(\frac{\gamma}{1-\sigma_{\gamma}}\sqrt{\frac{L}{\epsilon}})$ communication rounds and $\bO(\sqrt{\frac{L}{\epsilon}})$ gradient oracle calls to find an $\epsilon$-optimal averaged solution such that $F(\overline x)-F(x^*)\leq\epsilon$.
\end{corollary}
\begin{corollary}
Under the settings of Theorem \ref{theorem4}, algorithm (\ref{alg-tv-s1})-(\ref{alg-tv-s4}) combined with the multiple consensus subroutine requires time of $\bO(\frac{\gamma}{1-\sigma_{\gamma}}\sqrt{\frac{L}{\mu}}\log\frac{1}{\epsilon})$ communication rounds and $\bO(\sqrt{\frac{L}{\mu}}\log\frac{1}{\epsilon})$ gradient oracle calls to find an $\epsilon$-optimal averaged solution such that $F(\overline x)-F(x^*)\leq\epsilon$.
\end{corollary}

\begin{remark}\label{remark4}
The multiple consensus subroutine is only for the theoretical purpose. It may be impractical in a realistic time-varying network because communication has been recognized as the major bottleneck in distributed optimization. The multiple consensus may place a larger communication burden in practice, although it gives theoretically lower communication round complexities. A similar issue also happens in APM \citep{li-2018-pm} and DAGD-C \citep{Alexander2221}, which also need a multiple consensus subroutine.

On the other hand, decentralized optimization over time-varying graphs is important because of two reasons. Firstly, in many applications, the communication network varies with time, and algorithms for this scenario are needed. Secondly, many other scenarios can be reformulated as optimization over time-varying graphs, such as asynchrony \citep{Spiridonoff20jmlr}, local SGD \citep{Koloskova20}, and sparsification \citep{chen20sparsification}. In these scenarios, the real network may be fixed, and the time-varying graphs are only used for analysis. So the single loop methods are much more favored.
\end{remark}

\begin{remark}
Unlike the scenario over static graphs, the communication round complexity lower bounds over $\gamma$-connected time-varying graphs have not been established, and it is unclear whether the $\bO(\frac{\gamma}{1-\sigma_{\gamma}}\sqrt{\frac{L}{\epsilon}})$ and $\bO(\frac{\gamma}{1-\sigma_{\gamma}}\sqrt{\frac{L}{\mu}}\log\frac{1}{\epsilon})$ communication round complexities can be further improved. We leave it as an open problem. On the other hand, \citet{Kovalev21tvlb} established the $\bO(\frac{1}{1-\sigma}\sqrt{\frac{L}{\mu}}\log\frac{1}{\epsilon})$ communication round complexity lower bound and the $\bO(\sqrt{\frac{L}{\mu}}\log\frac{1}{\epsilon})$ gradient oracle complexity lower bound for the special scenario of connected graphs at every time. That is, $\gamma=1$ in our scenario.
\end{remark}

\section{Proofs of Theorems}

In this section, we prove the theorems in Sections \ref{sec:acttv} and \ref{sec:actsg}. We first reformulate algorithm (\ref{alg-tv-s1})-(\ref{alg-tv-s4}) as the inexact accelerated gradient descent and give its convergence rates in Section \ref{sec:proof1}, and then bound the consensus errors. To help the readers get a quick start on our proof framework, we first bound the consensus errors over static graphs in Sections \ref{sec:proof2}, and then extend it to the time-varying graphs in Section \ref{sec:proof3}. The former scenario provides some basis and insights for the complex proofs of the latter.

\subsection{Convergence Rates of the Inexact Accelerated Gradient Descent}\label{sec:proof1}
Following the proof framework in \citep{Jakovetic-2014,qu2017-2}, we  multiply both sides of (\ref{alg-tv-s1})-(\ref{alg-tv-s4}) by $\frac{1}{m}\1^{\top}$ and use the definitions in (\ref{def_av_x}) and (\ref{aggregate}) to yield
\begin{subequations}
\begin{align}
&\overline y^k=\theta_k\overline z^k+(1-\theta_k)\overline x^k,\label{alg-ag-s1}\\
&\overline s^k-\frac{1}{m}\sum_{i=1}^m\nabla f_{(i)}(y_{(i)}^k)=\overline s^{k-1}-\frac{1}{m}\sum_{i=1}^m\nabla f_{(i)}(y_{(i)}^{k-1}),\label{alg-ag-s2}\\
&\overline z^{k+1}=\frac{1}{1+\frac{\mu\alpha}{\theta_k}}\left(\frac{\mu\alpha}{\theta_k}\overline y^k+\overline z^k-\frac{\alpha}{\theta_k}\overline s^k\right),\label{alg-ag-s3}\\
&\overline x^{k+1}=\theta_k\overline z^{k+1}+(1-\theta_k)\overline x^k,\label{alg-ag-s4}
\end{align}
\end{subequations}
where we use the column stochasticity of $\1^{\top}\W^k=\1^{\top}$. From the initialization $\s^0=\nabla f(\y^0)$ and (\ref{alg-ag-s2}), we have the following standard but important property in gradient tracking:
\begin{equation}
\overline s^k=\frac{1}{m}\sum_{i=1}^m\nabla f_{(i)}(y_{(i)}^k).\label{s-relation}
\end{equation}
Iterations (\ref{alg-ag-s1})-(\ref{alg-ag-s4}) can be regarded as the inexact accelerated gradient descent \citep{devolder2014first} in the sense that we use $\frac{1}{m}\sum_{i=1}^m\nabla f_{(i)}(y_{(i)}^k)$ as the descent direction, rather than the true gradient $\frac{1}{m}\sum_{i=1}^m\nabla f_{(i)}(\overline y^k)$. In fact, when we replace $\overline s^k$ in step (\ref{alg-ag-s3}) by the true gradient, steps (\ref{alg-ag-s1}), (\ref{alg-ag-s3}), and (\ref{alg-ag-s4}) reduce to the updates of the standard accelerated gradient descent, see \citep{Nesterov-2004,lin-book} for example.

The next lemma demonstrates the analogy properties of convexity and smoothness with the inexact gradients. The proof can be found in \citep{Jakovetic-2014,qu2017-2}. For the completeness and the readers' convenience, we give the proof in the appendix.
\begin{lemma}\label{lemma_inexact}
Define
\begin{eqnarray}
f(\overline y^k,\y^k)=\frac{1}{m}\sum_{i=1}^m \left( f_{(i)}(y_{(i)}^k)+\<\nabla f_{(i)}(y_{(i)}^k),\overline y^k-y_{(i)}^k\> \right).\label{def_inexact_f}
\end{eqnarray}
Suppose that Assumption \ref{assumption_f} holds. Then, we have for any $w$,
\begin{eqnarray}
&&F(w)\geq f(\overline y^k,\y^k)+\<\overline s^k,w-\overline y^k\>+\frac{\mu}{2}\|w-\overline y^k\|^2,\label{inexact_sc}\\
&&F(w)\leq f(\overline y^k,\y^k)+\<\overline s^k,w-\overline y^k\>+\frac{L}{2}\|w-\overline y^k\|^2+\frac{L}{2m}\|\Pi\y^k\|^2.\label{inexact_sm}
\end{eqnarray}
Especially, we allow $\mu$ to be zero.
\end{lemma}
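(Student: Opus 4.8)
The plan is to derive both inequalities by applying the per-agent convexity and smoothness bounds of Assumption~\ref{assumption_f} at the \emph{local} points $y_{(i)}^k$, evaluated at a common test point $w$, and then averaging over $i=1,\dots,m$. Concretely, for the lower bound I would start from the $\mu$-strong convexity of each $f_{(i)}$,
\[
f_{(i)}(w)\geq f_{(i)}(y_{(i)}^k)+\<\nabla f_{(i)}(y_{(i)}^k),w-y_{(i)}^k\>+\frac{\mu}{2}\|w-y_{(i)}^k\|^2,
\]
and for the upper bound from the $L$-smoothness upper quadratic
\[
f_{(i)}(w)\leq f_{(i)}(y_{(i)}^k)+\<\nabla f_{(i)}(y_{(i)}^k),w-y_{(i)}^k\>+\frac{L}{2}\|w-y_{(i)}^k\|^2.
\]

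The decisive algebraic step is to split the displacement as $w-y_{(i)}^k=(w-\overline y^k)+(\overline y^k-y_{(i)}^k)$ inside the linear term. After summing and dividing by $m$, the constant pieces together with the $\<\nabla f_{(i)}(y_{(i)}^k),\overline y^k-y_{(i)}^k\>$ pieces assemble exactly into $f(\overline y^k,\y^k)$ from~(\ref{def_inexact_f}), while the remaining linear term collapses to $\<\overline s^k,w-\overline y^k\>$ once I invoke the gradient-tracking identity $\overline s^k=\frac{1}{m}\sum_{i=1}^m\nabla f_{(i)}(y_{(i)}^k)$ established in~(\ref{s-relation}). The left-hand side averages to $F(w)=\frac{1}{m}\sum_{i=1}^m f_{(i)}(w)$, so after this step both target inequalities are already in their final shape up to the quadratic remainder.

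What remains is to handle the quadratic remainder $\frac{1}{m}\sum_{i=1}^m\|w-y_{(i)}^k\|^2$, and here the only real content of the lemma appears: the parallel-axis (bias--variance) identity
\[
\frac{1}{m}\sum_{i=1}^m\|w-y_{(i)}^k\|^2=\|w-\overline y^k\|^2+\frac{1}{m}\sum_{i=1}^m\|y_{(i)}^k-\overline y^k\|^2,
\]
whose cross term vanishes because $\sum_{i=1}^m(y_{(i)}^k-\overline y^k)=0$ by definition of $\overline y^k$. For the lower bound the variance term is nonnegative and may simply be discarded, yielding the clean $\frac{\mu}{2}\|w-\overline y^k\|^2$. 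For the upper bound the same variance term must instead be \emph{kept}, and I would identify $\sum_{i=1}^m\|y_{(i)}^k-\overline y^k\|^2=\|\Pi\y^k\|^2$ from the definition~(\ref{def_pi}) of $\Pi$ together with the convention that $\|\cdot\|$ is the Frobenius norm, which produces precisely the $\frac{L}{2m}\|\Pi\y^k\|^2$ consensus-error term.

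I do not expect any genuine obstacle: the argument is pure first-order bookkeeping. The only point requiring care is the dual role of the variance term $\frac{1}{m}\sum_{i=1}^m\|y_{(i)}^k-\overline y^k\|^2$, which is dropped (by nonnegativity) in the convex bound but retained as the consensus penalty in the smooth bound. The case $\mu=0$ is automatically covered, since the lower bound then degenerates to ordinary convexity with the quadratic term absent.
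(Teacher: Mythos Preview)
Your proposal is correct and follows essentially the same route as the paper: apply the per-agent strong convexity and smoothness inequalities at $y_{(i)}^k$, average over $i$, split $w-y_{(i)}^k=(w-\overline y^k)+(\overline y^k-y_{(i)}^k)$ in the linear term to assemble $f(\overline y^k,\y^k)+\<\overline s^k,w-\overline y^k\>$, and use the bias--variance expansion of the quadratic, dropping the variance piece for the lower bound and identifying it with $\frac{1}{m}\|\Pi\y^k\|^2$ for the upper bound. There is no substantive difference between your argument and the paper's.
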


Define the Bregman divergence as follows:
\begin{equation}
D_f(x,\y^k)=\frac{1}{m}\sum_{i=1}^m\left( f_{(i)}(x)-f_{(i)}(y_{(i)}^k)-\<\nabla f_{(i)}(y_{(i)}^k),x-y_{(i)}^k\> \right).\label{def_bregman}
\end{equation}

The next lemma gives the convergence rates of the inexact accelerated gradient descent. The techniques in this proof are standard, see \citep{lin-book} for example. The crucial difference is that we keep the Bregman divergence term $D_f(\overline x^k,\y^k)$ in (\ref{cont3}) and (\ref{cont4}), which is motivated by \citep{Tseng-2008}.

Compared with the standard accelerated gradient descent, for example, see \citep{Nesterov-2004,lin-book}, there are two additional error terms $(a)$ and $(c)$ in our lemma due to the inexact gradients. In the next two sections, we bound the two terms carefully by $(b)$ and $(d)$, respectively, such that the convergence rates of the accelerated gradient tracking match those of the classical centralized accelerated gradient descent, which is the main technical contribution of this paper compared with the existing work on accelerated gradient tracking in \citep{qu2017-2}.
\begin{lemma}\label{lemma4}
Suppose that Assumption \ref{assumption_f} with $\mu=0$ and part 2 of Assumption \ref{assumption_w_tv} hold. Let the sequence $\{\theta_k\}_{k=0}^K$ satisfy $\frac{1-\theta_k}{\theta_k^2}=\frac{1}{\theta_{k-1}^2}$ with $\theta_0=1$. Then for algorithm (\ref{alg-tv-s1})-(\ref{alg-tv-s4}), we have
\begin{eqnarray}
\begin{aligned}\label{cont3}
&\frac{F(\overline x^{K+1})-F(x^*)}{\theta_K^2}+\frac{1}{2\alpha}\|\overline z^{K+1}-x^*\|^2\leq \frac{1}{2\alpha}\|\overline z^0-x^*\|^2\\
&~~\qquad+\underbrace{\sum_{k=0}^K\frac{L}{2m\theta_k^2}\|\Pi\y^k\|^2}_{\text{\rm term (a)}}-\underbrace{\sum_{k=0}^K\left(\left(\frac{1}{2\alpha}-\frac{L}{2}\right)\|\overline z^{k+1}-\overline z^k\|^2+\frac{1}{\theta_{k-1}^2}D_f(\overline x^k,\y^k)\right)}_{\text{\rm term (b)}}.
\end{aligned}
\end{eqnarray}
Suppose that Assumption \ref{assumption_f} with $\mu>0$ and part 2 of Assumption \ref{assumption_w_tv} hold. Let $\theta_k=\theta=\frac{\sqrt{\alpha\mu}}{2}$ for all $k$ and assume that $\alpha\mu\leq 1$. Then for algorithm (\ref{alg-tv-s1})-(\ref{alg-tv-s4}), we have
\begin{eqnarray}
\begin{aligned}\label{cont4}
&\frac{1}{(1-\theta)^{K+1}}\left(F(\overline x^{K+1})-F(x^*)+\left(\frac{\theta^2}{2\alpha}+\frac{\mu\theta}{2}\right) \|\overline z^{K+1}-x^*\|^2\right)\\
&~~\leq F(\overline x^0)-F(x^*)+\left(\frac{\theta^2}{2\alpha}+\frac{\mu\theta}{2}\right)\|\overline z^0-x^*\|^2+\underbrace{\sum_{k=0}^K\frac{L}{2m(1-\theta)^{k+1}}\|\Pi\y^k\|^2}_{\text{\rm term (c)}}\\
&~~\quad-\underbrace{\sum_{k=0}^K\left(\frac{1}{(1-\theta)^{k+1}}\left(\frac{\theta^2}{2\alpha}-\frac{L\theta^2}{2}\right)\|\overline z^{k+1}-\overline z^k\|^2+\frac{1}{(1-\theta)^k}D_f(\overline x^k,\y^k)\right)}_{\text{\rm term (d)}}.
\end{aligned}
\end{eqnarray}
\end{lemma}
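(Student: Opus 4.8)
The plan is to run the classical estimate-sequence / Lyapunov argument for accelerated gradient descent on the averaged iterates (\ref{alg-ag-s1})--(\ref{alg-ag-s4}), treating $\overline s^k$ as an inexact gradient and carrying the extra consensus term $\frac{L}{2m}\|\Pi\y^k\|^2$ from the upper bound (\ref{inexact_sm}) through to the end as term (a)/(c). Three elementary identities drive everything. First, from (\ref{alg-ag-s1}) and (\ref{alg-ag-s4}) one gets the geometric relation $\overline x^{k+1}-\overline y^k=\theta_k(\overline z^{k+1}-\overline z^k)$, so the displacement of $\overline x$ is a scaled displacement of $\overline z$. Second, unfolding the definition (\ref{def_bregman}) and using $\overline s^k=\frac1m\sum_i\nabla f_{(i)}(y_{(i)}^k)$ yields the exact identity $F(\overline x^k)=f(\overline y^k,\y^k)+\<\overline s^k,\overline x^k-\overline y^k\>+D_f(\overline x^k,\y^k)$; this is the step where the Bregman term is deliberately kept rather than discarded, which is what later lets terms (b)/(d) absorb the consensus error. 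Third, the update (\ref{alg-ag-s3}) is exactly the minimizer of the strongly convex proximal model $z\mapsto\<\overline s^k,z\>+\frac{\theta_k}{2\alpha}\|z-\overline z^k\|^2+\frac{\mu}{2}\|z-\overline y^k\|^2$, so its $\left(\frac{\theta_k}{\alpha}+\mu\right)$-strong convexity gives the three-point inequality bounding $\<\overline s^k,\overline z^{k+1}-w\>$ for every $w$.

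Next I would assemble the one-step descent. Take the lower bound (\ref{inexact_sc}) at $w=x^*$ and the Bregman identity at $\overline x^k$, combine them with weights $\theta_k$ and $1-\theta_k$, and use $\theta_k x^*+(1-\theta_k)\overline x^k-\overline y^k=\theta_k(x^*-\overline z^k)$ to turn the inner product into $\theta_k\<\overline s^k,x^*-\overline z^k\>$ plus a retained $(1-\theta_k)D_f(\overline x^k,\y^k)$ and, in the $\mu>0$ case, a $\frac{\theta_k\mu}{2}\|x^*-\overline y^k\|^2$ term. Take the upper bound (\ref{inexact_sm}) at $w=\overline x^{k+1}$ and substitute the geometric relation to get $\theta_k\<\overline s^k,\overline z^{k+1}-\overline z^k\>+\frac{L\theta_k^2}{2}\|\overline z^{k+1}-\overline z^k\|^2+\frac{L}{2m}\|\Pi\y^k\|^2$. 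Subtracting the weighted lower bound from the upper bound collapses the two inner products into $\theta_k\<\overline s^k,\overline z^{k+1}-x^*\>$, which I then bound by the three-point inequality with $w=x^*$. In the strongly convex case the $\frac{\theta_k\mu}{2}\|x^*-\overline y^k\|^2$ produced by the lower bound cancels exactly the one coming out of the three-point inequality, leaving only $\|\overline z^{k+1}-\overline z^k\|^2$, $\|\overline z^{k+1}-\overline y^k\|^2$, the Bregman term, and the distance-to-$x^*$ terms.

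The final step is telescoping. Writing $\Delta_k=F(\overline x^k)-F(x^*)$, the one-step bound reads $\Delta_{k+1}-(1-\theta_k)\Delta_k+(\text{coeff})\|\overline z^{k+1}-x^*\|^2-\frac{\theta_k^2}{2\alpha}\|\overline z^k-x^*\|^2\le(\text{negative quadratics})+\frac{L}{2m}\|\Pi\y^k\|^2$. For $\mu=0$ I divide by $\theta_k^2$ and use $\frac{1-\theta_k}{\theta_k^2}=\frac{1}{\theta_{k-1}^2}$ so that the $\Delta$ and distance terms telescope; with $\theta_0=1$ the convention $1/\theta_{-1}^2=0$ kills both the $\Delta_0$ term and the $k=0$ Bregman term, yielding (\ref{cont3}), where the surviving $\|\overline z^{k+1}-\overline z^k\|^2$ coefficient $\frac1{2\alpha}-\frac L2$ and the Bregman coefficient $\frac{1}{\theta_{k-1}^2}$ reproduce term (b). For $\mu>0$ I form the Lyapunov quantity $V_k=\Delta_k+\left(\frac{\theta^2}{2\alpha}+\frac{\mu\theta}{2}\right)\|\overline z^k-x^*\|^2$, divide by $(1-\theta)^{k+1}$, and sum; the negative $\|\overline z^{k+1}-\overline y^k\|^2$ terms are simply dropped, giving (\ref{cont4}).

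I expect the one genuinely delicate point to be, in the strongly convex case, matching the distance-to-$x^*$ coefficients so the recursion closes with factor $(1-\theta)$: after subtraction the coefficient on $\|\overline z^k-x^*\|^2$ is $\frac{\theta^2}{2\alpha}$, whereas the Lyapunov function carries $\frac{\theta^2}{2\alpha}+\frac{\mu\theta}{2}$, so I need $\frac{\theta^2}{2\alpha}\le(1-\theta)\left(\frac{\theta^2}{2\alpha}+\frac{\mu\theta}{2}\right)$. With $\theta=\frac{\sqrt{\mu\alpha}}2$ this reduces to $\theta(3-4\theta)\ge0$, i.e. $\theta\le\frac34$, which is guaranteed by the hypothesis $\alpha\mu\le1$ (so $\theta\le\frac12$). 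Everything else is bookkeeping; the conceptual content lies entirely in the Bregman identity and in reading (\ref{alg-ag-s3}) as a prox step, so that the inexactness surfaces only through the single additive term $\frac{L}{2m}\|\Pi\y^k\|^2$.
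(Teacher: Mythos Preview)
Your proposal is correct and follows essentially the same route as the paper's proof: both use the geometric relation $\overline x^{k+1}-\overline y^k=\theta_k(\overline z^{k+1}-\overline z^k)$, the exact Bregman identity at $\overline x^k$ (keeping $D_f(\overline x^k,\y^k)$), the lower bound (\ref{inexact_sc}) at $x^*$ with weight $\theta_k$, and then handle $\theta_k\langle\overline s^k,\overline z^{k+1}-x^*\rangle$ via the optimality condition of (\ref{alg-ag-s3}) to produce the three squared-norm terms, arriving at the one-step recursion (\ref{cont2}) before telescoping. The only cosmetic difference is that you phrase the treatment of the second inner product as a three-point inequality for the prox model, whereas the paper writes out the equivalent polarization identity directly from the update rule; your verification of $\frac{\theta^2}{2\alpha}\le(1-\theta)\bigl(\frac{\theta^2}{2\alpha}+\frac{\mu\theta}{2}\bigr)$ via $\theta(3-4\theta)\ge0$ also matches the paper's check.
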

\begin{proof}
From the inexact smoothness (\ref{inexact_sm}), we have
\begin{eqnarray}
\begin{aligned}\label{cont1}
F(\overline x^{k+1})\leq& f(\overline y^k,\y^k)+\<\overline s^k,\overline x^{k+1}-\overline y^k\>+\frac{L}{2}\|\overline x^{k+1}-\overline y^k\|^2+\frac{L}{2m}\|\Pi\y^k\|^2\\
\overset{a}=& f(\overline y^k,\y^k)+\theta_k\<\overline s^k,\overline z^{k+1}-\overline z^k\>+\frac{L\theta_k^2}{2}\|\overline z^{k+1}-\overline z^k\|^2+\frac{L}{2m}\|\Pi\y^k\|^2\\
=& f(\overline y^k,\y^k)+\theta_k\<\overline s^k,x^*-\overline z^k\>+\theta_k\<\overline s^k,\overline z^{k+1}-x^*\>+\frac{L\theta_k^2}{2}\|\overline z^{k+1}-\overline z^k\|^2+\frac{L}{2m}\|\Pi\y^k\|^2,
\end{aligned}
\end{eqnarray}
where we use (\ref{alg-ag-s1}) and (\ref{alg-ag-s4}) in $\overset{a}=$. Next, we bound the two inner product terms. For the first inner product, we have
\begin{eqnarray}
\begin{aligned}\notag
&f(\overline y^k,\y^k)+\theta_k\<\overline s^k,x^*-\overline z^k\>\\
&~~\overset{b}=f(\overline y^k,\y^k)+\<\overline s^k,\theta_k x^*+(1-\theta_k)\overline x^k-\overline y^k\>\\
&~~=\theta_k\left(f(\overline y^k,\y^k)+\<\overline s^k,x^*-\overline y^k\>\right)+(1-\theta_k)\left(f(\overline y^k,\y^k)+\<\overline s^k,\overline x^k-\overline y^k\>\right)\\
&~~\overset{c}\leq\theta_k F(x^*)-\frac{\mu\theta_k}{2}\|\overline y^k-x^*\|^2+\frac{1-\theta_k}{m}\sum_{i=1}^m \left( f_{(i)}(y_{(i)}^k)+\<\nabla f_{(i)}(y_{(i)}^k),\overline x^k-y_{(i)}^k\> \right)\\
&~~=\theta_k F(x^*)-\frac{\mu\theta_k}{2}\|\overline y^k-x^*\|^2+(1-\theta_k)F(\overline x^k)\\
&~~\quad-\frac{1-\theta_k}{m}\sum_{i=1}^m \left( f_{(i)}(\overline x^k)-f_{(i)}(y_{(i)}^k)-\<\nabla f_{(i)}(y_{(i)}^k),\overline x^k-y_{(i)}^k\> \right)\\
&~~=\theta_k F(x^*)-\frac{\mu\theta_k}{2}\|\overline y^k-x^*\|^2+(1-\theta_k)F(\overline x^k)-(1-\theta_k)D_f(\overline x^k,\y^k),
\end{aligned}
\end{eqnarray}
where we use (\ref{alg-ag-s1}) in $\overset{b}=$, (\ref{inexact_sc}), (\ref{def_inexact_f}), and (\ref{s-relation}) in $\overset{c}\leq$. For the second inner product, we have
\begin{eqnarray}
\begin{aligned}\notag
\theta_k\<\overline s^k,\overline z^{k+1}-x^*\>\overset{d}=&-\frac{\theta_k^2}{\alpha}\<\overline z^{k+1}-\overline z^k+\frac{\mu\alpha}{\theta_k}(\overline z^{k+1}-\overline y^k),\overline z^{k+1}-x^*\>\\
=&\frac{\theta_k^2}{2\alpha}\left( \|\overline z^k-x^*\|^2-\|\overline z^{k+1}-x^*\|^2-\|\overline z^{k+1}-\overline z^k\|^2 \right)\\
&+\frac{\mu\theta_k}{2}\left( \|\overline y^k-x^*\|^2-\|\overline z^{k+1}-x^*\|^2-\|\overline z^{k+1}-\overline y^k\|^2 \right),
\end{aligned}
\end{eqnarray}
where we use (\ref{alg-ag-s3}) in $\overset{d}=$. Plugging into (\ref{cont1}) and rearranging the terms, it gives
\begin{eqnarray}
\begin{aligned}\label{cont2}
&F(\overline x^{k+1})-F(x^*)+\left(\frac{\theta_k^2}{2\alpha}+\frac{\mu\theta_k}{2}\right) \|\overline z^{k+1}-x^*\|^2\\
&~~\leq (1-\theta_k)(F(\overline x^k)-F(x^*))+\frac{\theta_k^2}{2\alpha}\|\overline z^k-x^*\|^2\\
&~~\quad-\left(\frac{\theta_k^2}{2\alpha}-\frac{L\theta_k^2}{2}\right)\|\overline z^{k+1}-\overline z^k\|^2-(1-\theta_k)D_f(\overline x^k,\y^k)+\frac{L}{2m}\|\Pi\y^k\|^2.
\end{aligned}
\end{eqnarray}

Case 1: Each $f_{(i)}$ is nonstrongly convex. In this case, (\ref{cont2}) holds with $\mu=0$. Dividing both sides of (\ref{cont2}) by $\theta_k^2$, summing over $k=0,1,...,K$, using $\frac{1-\theta_k}{\theta_k^2}=\frac{1}{\theta_{k-1}^2}$ and $\theta_0=1$, we have (\ref{cont3}).

Case 2: Each $f_{(i)}$ is $\mu$-strongly convex. Letting $\theta_k=\theta=\frac{\sqrt{\alpha\mu}}{2}$ for all $k$, we know $\frac{\theta^2}{2\alpha}\leq\left(\frac{\theta^2}{2\alpha}+\frac{\mu\theta}{2}\right)(1-\theta)$ holds if $\alpha\mu\leq 1$. Dividing both sides of (\ref{cont2}) by $(1-\theta)^{k+1}$ and summing over $k=0,1,...,K$, it gives (\ref{cont4}).
\end{proof}

\subsection{Bounding the Consensus Errors over Static Graphs}\label{sec:proof2}
In this section, we bound the term $(a)$ by $(b)$ appeared in (\ref{cont3}), and the term $(c)$ by $(d)$ in (\ref{cont4}) over static graphs. We first bound $\|\Pi\y^k\|^2$ in the next lemma. The crucial trick is to construct a linear combination of the consensus errors with carefully designed weights such that it shrinks geometrically with an additional error term. Moreover, the step size $\alpha$ remains to be a constant of the order $\bO(\frac{1}{L})$ as large as possible. Another trick is that we use a constant $\tau$ to balance $D_f(\overline x^{r+1},\y^{r+1})$ and $\|\overline z^{r+1}-\overline z^r\|^2$ in $\Phi^r$, which is generated by Young's inequality and will be specified later.

\begin{lemma}\label{lemma2}
Suppose that Assumptions \ref{assumption_f} and \ref{assumption_w} hold with $\mu\geq 0$. Let $\alpha\leq\frac{(1-\sigma)^3}{80L\sqrt{1+\frac{1}{\tau}}}$ and the sequence $\{\theta_k\}_{k=0}^K$ satisfy $\theta_{k+1}\leq \theta_k\leq 1$. Then for algorithm (\ref{alg-tv-s1})-(\ref{alg-tv-s4}) with fixed gossip matrix $\W$, we have
\begin{eqnarray}
\qquad\begin{aligned}\label{cont9}
\max\left\{\|\Pi\y^{k+1}\|^2,\|\Pi\x^{k+1}\|^2\right\}\leq C_1\rho^{k+1}+C_2\sum_{r=0}^k\rho^{k-r}\theta_r^2\Phi^r,
\end{aligned}
\end{eqnarray}
where $\rho=1-\frac{1-\sigma}{4}$, $C_1=\frac{(1-\sigma)^2}{18(1+\frac{1}{\tau})L^2}\|\Pi\s^0\|^2$, $C_2=\frac{1-\sigma}{9(1+\frac{1}{\tau})L^2}$,
\begin{equation}
\Phi^r=\frac{2mL(1+\tau)}{\theta_r^2}D_f(\overline x^{r+1},\y^{r+1})+2mL^2\left(1+\frac{1}{\tau}\right)\|\overline z^{r+1}-\overline z^r\|^2,\label{cont18}
\end{equation}
and $\tau$ can be any positive constant.
\end{lemma}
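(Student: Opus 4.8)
The plan is to project every step of (\ref{alg-s1})--(\ref{alg-s4}) onto the disagreement subspace with $\Pi$ and to build a single weighted quantity in $\|\Pi\z^k\|^2$, $\|\Pi\x^k\|^2$, and $\|\Pi\s^k\|^2$ that contracts at rate $\rho=1-\frac{1-\sigma}{4}$ up to the source $\theta_k^2\Phi^k$. First I would apply $\Pi$ to each update, using $\Pi W=\Pi W\Pi$ and the contraction (\ref{consensus-contraction}). Writing $a_k=\frac{\mu\alpha/\theta_k}{1+\mu\alpha/\theta_k}$ and $b_k=\frac{1}{1+\mu\alpha/\theta_k}$, so that $a_k+b_k=1$ and $a_k,b_k\in[0,1]$ independently of $\mu$, and substituting $\Pi\y^k=\theta_k\Pi\z^k+(1-\theta_k)\Pi\x^k$ into the $\z$-step, this gives the scalar recursions $\|\Pi\z^{k+1}\|\le\sigma\|\Pi\z^k\|+\sigma a_k(1-\theta_k)\|\Pi\x^k\|+\frac{\alpha}{\theta_k}\|\Pi\s^k\|$, $\|\Pi\x^{k+1}\|\le\theta_k\|\Pi\z^{k+1}\|+(1-\theta_k)\sigma\|\Pi\x^k\|$, and $\|\Pi\s^k\|\le\sigma\|\Pi\s^{k-1}\|+\|\nabla f(\y^k)-\nabla f(\y^{k-1})\|$, where the coefficients being bounded by $1$ keeps them uniform in $\mu$.

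The decisive step is to feed the gradient-difference source of the $\s$-recursion back in terms of the quantities in $\Phi$ (see (\ref{cont18})). Rather than the crude bound $L\|\y^k-\y^{k-1}\|$, I would insert the consensus gradients whose $i$th rows are $\nabla f_{(i)}(\overline x^{k})$: the co-coercivity consequence of Assumption \ref{assumption_f}, namely $\frac1m\sum_i\|\nabla f_{(i)}(y_{(i)}^k)-\nabla f_{(i)}(\overline x^{k})\|^2\le 2L\,D_f(\overline x^{k},\y^k)$, produces exactly the Bregman terms of $\Phi$ (up to the index shift visible in (\ref{cont18})), while the purely averaged residual $\nabla f(\1(\overline x^{k})^T)-\nabla f(\1(\overline x^{k-1})^T)$ is controlled by $L$-smoothness through $\sqrt{m}\,\|\overline x^{k}-\overline x^{k-1}\|$, which the averaged update (\ref{alg-ag-s4}) turns into consecutive $\overline z$-differences. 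Young's inequality with the free parameter $\tau$ is what splits these two contributions into the weights $1+\tau$ and $1+\frac1\tau$ appearing in (\ref{cont18}); the nonnegative consensus part of $\|\y^k-\y^{k-1}\|$ generated along the way, namely $\|\Pi\y^k\|^2\le\theta_k\|\Pi\z^k\|^2+(1-\theta_k)\|\Pi\x^k\|^2$, folds back into the three recursions.

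With these ingredients I would assemble the weighted Lyapunov quantity $V^k=\|\Pi\z^k\|^2+c_x\|\Pi\x^k\|^2+c_s\|\Pi\s^{k-1}\|^2$ and choose the positive weights $c_x,c_s$ so that, after squaring the scalar recursions and applying Young's inequality to each cross term, every off-diagonal coupling is dominated by the diagonal contraction. The step-size ceiling $\alpha\le\frac{(1-\sigma)^3}{80L\sqrt{1+1/\tau}}$ is precisely what makes the $\frac{\alpha}{\theta_k}\|\Pi\s^k\|$ feedback and the gradient-difference coupling small enough to close the one-step estimate $V^{k+1}\le\rho V^k+C_2'\,\theta_k^2\Phi^k$ with $\rho=1-\frac{1-\sigma}{4}$; here the $\theta_k^2$ weighting of the source together with the compensating $\frac{1}{\theta_k^2}$ inside the $D_f$ part of $\Phi^k$ absorbs the factor $\frac{\alpha}{\theta_k}$, which would otherwise blow up as $\theta_k\to0$. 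Unrolling the one-step contraction geometrically yields $V^{k+1}\le C_1\rho^{k+1}+C_2\sum_{r=0}^k\rho^{k-r}\theta_r^2\Phi^r$ with $C_1\propto\|\Pi\s^0\|^2$ coming from the homogeneous decay of the tracking error; since $\|\Pi\x^{k+1}\|^2$ and $\|\Pi\y^{k+1}\|^2\le\theta_{k+1}\|\Pi\z^{k+1}\|^2+(1-\theta_{k+1})\|\Pi\x^{k+1}\|^2$ are both dominated by $V^{k+1}$, the claimed bound (\ref{cont9}) follows.

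The main obstacle will be the weight bookkeeping in the last step: selecting $c_x,c_s$ and the Young parameters so that all couplings close at exactly $\rho=1-\frac{1-\sigma}{4}$ with the explicit $C_1,C_2$ under the stated step size, while keeping the $\theta_k$-dependence consistent so that $\frac{\alpha}{\theta_k}$ is genuinely absorbed by the $\theta_r^2\Phi^r$ source instead of accumulating across the geometric sum. Obtaining the power $(1-\sigma)^3$ in the step-size threshold, rather than a worse one, forces the spectral estimates on all three projected maps to be kept tight, which is where most of the care is needed.
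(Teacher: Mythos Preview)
Your overall architecture---project with $\Pi$, derive scalar recursions for $\|\Pi\s\|,\|\Pi\z\|,\|\Pi\x\|$, bound the gradient difference via the Bregman/co-coercivity trick with Young parameter $\tau$, combine into a single Lyapunov that contracts at rate $\rho$, then unroll---matches the paper's proof. The gradient-difference decomposition you sketch is also essentially right (the paper inserts both $\nabla f_{(i)}(\overline x^{k+1})$ and $\nabla f_{(i)}(\overline y^k)$, so the Bregman term comes out at index $k{+}1$ and the averaged residual is $\overline x^{k+1}-\overline y^k=\theta_k(\overline z^{k+1}-\overline z^k)$, but that is only a shift).

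There is, however, one genuine gap in the Lyapunov you propose. Your $V^k=\|\Pi\z^k\|^2+c_x\|\Pi\x^k\|^2+c_s\|\Pi\s^{k-1}\|^2$ cannot close the contraction when $\theta_k\to 0$. Squaring the $\z$-recursion produces a cross term of size $\frac{\alpha^2}{\theta_k^2}\|\Pi\s^k\|^2$. This is a \emph{state-to-state} coupling (from $\z$ to $\s$), not a source term; it cannot be absorbed by $\theta_k^2\Phi^k$, contrary to what you write, because $\Phi^k$ enters only through the $\s$-recursion. To dominate it by the $c_s\|\Pi\s\|^2$ part of $\rho V^k$ with fixed weights you would need $\frac{\alpha^2}{\theta_k^2}$ bounded, which fails in the nonstrongly convex regime where $\theta_k\sim\frac{2}{k+1}$. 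Your explanation that ``the $\theta_k^2$ weighting of the source \dots\ absorbs the factor $\frac{\alpha}{\theta_k}$'' confuses the location of the bad factor.

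The paper's fix is exactly the one missing from your proposal: weight the $\z$-component by $\theta_k^2$, i.e.\ take the Lyapunov $c_1\|\Pi\s^k\|^2+c_2\theta_k^2\|\Pi\z^k\|^2+c_3\|\Pi\x^k\|^2$. Multiplying the squared $\z$-recursion by $c_2\theta_{k+1}^2\le c_2\theta_k^2$ cancels the $\frac{1}{\theta_k^2}$ in the $\z$-to-$\s$ coupling, leaving only $c_2\alpha^2\|\Pi\s^k\|^2$, which the step-size condition $\alpha\le\frac{(1-\sigma)^3}{80L\sqrt{1+1/\tau}}$ then controls. With $c_3\sim\frac{c_0c_1}{(1-\sigma)^2}$ and $c_2\sim\frac{c_0c_1}{(1-\sigma)^4}$ (where $c_0=4L^2(1+1/\tau)$), all three diagonal entries contract at $\frac{3+\sigma}{4}=\rho$, and after using the initialization $\Pi\z^0=\Pi\x^0=0$ the constants $C_1,C_2$ in (\ref{cont9}) drop out. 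If you rerun your plan with this $\theta_k^2$-weighted Lyapunov, everything you wrote goes through.
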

\begin{proof}
Multiplying both sides of (\ref{alg-tv-s1})-(\ref{alg-tv-s4}) by $\Pi$, using (\ref{consensus-contraction}) and $\|\Pi\x\|\leq\|\x\|$, we have
\begin{eqnarray}
&&\|\Pi\y^k\|\leq\theta_k\|\Pi\z^k\|+(1-\theta_k)\|\Pi\x^k\|\leq\theta_k\|\Pi\z^k\|+\|\Pi\x^k\|,\label{cont5}\\
&&\|\Pi\s^{k+1}\|\leq\sigma\|\Pi\s^k\|+\|\nabla f(\y^{k+1})-\nabla f(\y^k)\|,\label{cont6}\\
&&\begin{aligned}\label{cont7}
\|\Pi\z^{k+1}\|\leq&\sigma\left(\frac{\mu\alpha}{\theta_k+\mu\alpha}\|\Pi\y^k\|+\frac{\theta_k}{\theta_k+\mu\alpha}\|\Pi\z^k\|\right)+\frac{\alpha}{\theta_k+\mu\alpha}\|\Pi\s^k\|\\
\overset{a}\leq&\frac{\sigma(\mu\alpha+1)\theta_k}{\theta_k+\mu\alpha}\|\Pi\z^k\|+\frac{\sigma\mu\alpha}{\theta_k+\mu\alpha}\|\Pi\x^k\|+\frac{\alpha}{\theta_k+\mu\alpha}\|\Pi\s^k\|\\
\overset{b}\leq&\sigma\|\Pi\z^k\|+\frac{\mu\alpha}{\theta_k}\|\Pi\x^k\|+\frac{\alpha}{\theta_k}\|\Pi\s^k\|,
\end{aligned}\\
&&\|\Pi\x^{k+1}\|\leq\theta_k\|\Pi\z^{k+1}\|+\sigma\|\Pi\x^k\|,\label{cont8}
\end{eqnarray}
where $\overset{a}\leq$ uses (\ref{cont5}), $\overset{b}\leq$ uses $\sigma<1$ and $\frac{(\mu\alpha+1)\theta_k}{\theta_k+\mu\alpha}\leq 1$ with $\theta_k\leq 1$. Next, we bound $\|\nabla f(\y^{k+1})-\nabla f(\y^k)\|$.
\begin{eqnarray}
\begin{aligned}\label{cont15}
&\|\nabla f(\y^{k+1})-\nabla f(\y^k)\|^2=\sum_{i=1}^m\|\nabla f_{(i)}(y_{(i)}^{k+1})-\nabla f_{(i)}(y_{(i)}^k)\|^2\\
&~~\overset{c}\leq\sum_{i=1}^m(1+\tau)\|\nabla f_{(i)}(y_{(i)}^{k+1})-\nabla f_{(i)}(\overline x^{k+1})\|^2\\
&~~\quad+\sum_{i=1}^m\left(1+\frac{1}{\tau}\right)2\left(\|\nabla f_{(i)}(\overline x^{k+1})-\nabla f_{(i)}(\overline y^k)\|^2+\|\nabla f_{(i)}(\overline y^k)-\nabla f_{(i)}(y_{(i)}^k)\|^2\right)\\
&~~\overset{d}\leq2mL(1+\tau)D_f(\overline x^{k+1},\y^{k+1})+2L^2\left(1+\frac{1}{\tau}\right)\sum_{i=1}^m\left(\|\overline x^{k+1}-\overline y^k\|^2+\|\overline y^k-y_{(i)}^k\|^2\right)\\
&~~\overset{e}=2mL(1+\tau)D_f(\overline x^{k+1},\y^{k+1})+2L^2\left(1+\frac{1}{\tau}\right)\left(m\theta_k^2\|\overline z^{k+1}-\overline z^k\|^2+\|\Pi\y^k\|^2\right)\\
&~~=\theta_k^2\Phi^k+2L^2\left(1+\frac{1}{\tau}\right)\|\Pi\y^k\|^2\\
&~~\overset{f}\leq\theta_k^2\Phi^k+4L^2\left(1+\frac{1}{\tau}\right)\left(\theta_k^2\|\Pi\z^k\|^2+\|\Pi\x^k\|^2\right),
\end{aligned}
\end{eqnarray}
where $\overset{c}\leq$ uses Young's inequality of $\|a-b\|^2\leq (1+\tau)\|a\|^2+(1+\frac{1}{\tau})\|b\|^2$ for any $\tau>0$, $\overset{d}\leq$ uses (\ref{cont43}), the smoothness of $f_{(i)}$, and the definition of $D_f$ in (\ref{def_bregman}), $\overset{e}=$ uses (\ref{alg-ag-s1}), (\ref{alg-ag-s4}), and the definition of $\Pi\y$ in (\ref{def_pi}), $\overset{f}\leq$ uses (\ref{cont5}). Denote $c_0=4L^2\left(1+\frac{1}{\tau}\right)$ for simplicity in the remaining proof of this lemma.

\noindent Squaring both sides of (\ref{cont6}), it follows that
\begin{eqnarray}
\begin{aligned}\label{cont10}
\|\Pi\s^{k+1}\|^2\leq&\left(1+\frac{1-\sigma}{2\sigma}\right)\sigma^2\|\Pi\s^k\|^2+\left(1+\frac{2\sigma}{1-\sigma}\right)\|\nabla f(\y^{k+1})-\nabla f(\y^k)\|^2\\
=&\frac{\sigma+\sigma^2}{2}\|\Pi\s^k\|^2+\frac{1+\sigma}{1-\sigma}\|\nabla f(\y^{k+1})-\nabla f(\y^k)\|^2\\
\overset{g}\leq&\frac{1+\sigma}{2}\|\Pi\s^k\|^2+\frac{2}{1-\sigma}\left(\theta_k^2\Phi^k+c_0\theta_k^2\|\Pi\z^k\|^2+c_0\|\Pi\x^k\|^2\right),
\end{aligned}
\end{eqnarray}
where we use $\sigma<1$ and (\ref{cont15}) in $\overset{g}\leq$. Similarly, for (\ref{cont7}) and (\ref{cont8}), we also have
\begin{eqnarray}
&&\|\Pi\z^{k+1}\|^2\leq\frac{1+\sigma}{2}\|\Pi\z^k\|^2+\frac{4}{1-\sigma}\left(\frac{\mu^2\alpha^2}{\theta_k^2}\|\Pi\x^k\|^2+\frac{\alpha^2}{\theta_k^2}\|\Pi\s^k\|^2\right),\label{cont11}\\
&&\|\Pi\x^{k+1}\|^2\leq\frac{1+\sigma}{2}\|\Pi\x^k\|^2+\frac{2\theta_k^2}{1-\sigma}\|\Pi\z^{k+1}\|^2.\label{cont12}
\end{eqnarray}
Adding (\ref{cont10}), (\ref{cont11}), and (\ref{cont12}) together with the weights $c_1$, $c_2\theta_{k+1}^2$, and $c_3$, respectively, we have
\begin{eqnarray}
\begin{aligned}
&c_1\|\Pi\s^{k+1}\|^2+c_2\theta_{k+1}^2\|\Pi\z^{k+1}\|^2+c_3\|\Pi\x^{k+1}\|^2\notag\\
&~~\overset{h}\leq c_1\|\Pi\s^{k+1}\|^2+\left(c_2\theta_k^2+\frac{2c_3\theta_k^2}{1-\sigma}\right)\|\Pi\z^{k+1}\|^2+\frac{c_3(1+\sigma)}{2}\|\Pi\x^k\|^2\notag\\
&~~\leq\left(\frac{c_1(1+\sigma)}{2}+\left(c_2+\frac{2c_3}{1-\sigma}\right)\frac{4\alpha^2}{1-\sigma}\right)\|\Pi\s^k\|^2\notag\\
&~~\quad+\left(\frac{2c_0c_1}{1-\sigma}+\left(c_2+\frac{2c_3}{1-\sigma}\right)\frac{1+\sigma}{2}\right)\theta_k^2\|\Pi\z^k\|^2\notag\\
&~~\quad+\left(\frac{2c_0c_1}{1-\sigma}+\left(c_2+\frac{2c_3}{1-\sigma}\right)\frac{4\mu^2\alpha^2}{1-\sigma}+\frac{c_3(1+\sigma)}{2}\right)\|\Pi\x^k\|^2+\frac{2c_1}{1-\sigma}\theta_k^2\Phi^k,\notag
\end{aligned}
\end{eqnarray}
where we use $\theta_{k+1}\leq\theta_k$ and (\ref{cont12}) in $\overset{h}\leq$. Letting $c_3=\frac{9c_0c_1}{(1-\sigma)^2}$, $c_2=\frac{80c_0c_1}{(1-\sigma)^4}\geq\frac{8c_0c_1}{(1-\sigma)^2}+\frac{8c_3}{(1-\sigma)^2}$, and $\alpha^2\leq\min\left\{\frac{(1-\sigma)^6}{1600c_0},\frac{(1-\sigma)^4}{1600\mu^2}\right\}$ such that
\begin{eqnarray}
&&\frac{c_1(1+\sigma)}{2}+\left(c_2+\frac{2c_3}{1-\sigma}\right)\frac{4\alpha^2}{1-\sigma}\leq\frac{c_1(1+\sigma)}{2}+\frac{400c_0c_1\alpha^2}{(1-\sigma)^5}\leq \frac{c_1(3+\sigma)}{4},\notag\\
&&\frac{2c_0c_1}{1-\sigma}+\left(c_2+\frac{2c_3}{1-\sigma}\right)\frac{1+\sigma}{2}\leq \frac{2c_0c_1}{1-\sigma}+\frac{c_2(1+\sigma)}{2}+\frac{2c_3}{1-\sigma}\leq\frac{c_2(3+\sigma)}{4},\notag\\
&&\frac{2c_0c_1}{1\hspace*{-0.03cm}-\hspace*{-0.03cm}\sigma}\hspace*{-0.03cm}+\hspace*{-0.03cm}\left(\hspace*{-0.03cm}c_2\hspace*{-0.03cm}+\hspace*{-0.03cm}\frac{2c_3}{1\hspace*{-0.03cm}-\hspace*{-0.03cm}\sigma}\hspace*{-0.03cm}\right)\hspace*{-0.03cm}\frac{4\mu^2\alpha^2}{1\hspace*{-0.03cm}-\hspace*{-0.03cm}\sigma}\hspace*{-0.03cm}+\hspace*{-0.03cm}\frac{c_3(1\hspace*{-0.03cm}+\hspace*{-0.03cm}\sigma)}{2}\hspace*{-0.03cm}\leq\hspace*{-0.03cm}\frac{2c_0c_1}{1\hspace*{-0.03cm}-\hspace*{-0.03cm}\sigma}\hspace*{-0.03cm}+\hspace*{-0.03cm}\frac{400c_0c_1\mu^2\alpha^2}{(1\hspace*{-0.03cm}-\hspace*{-0.03cm}\sigma)^5}\hspace*{-0.03cm}+\hspace*{-0.03cm}\frac{c_3(1\hspace*{-0.03cm}+\hspace*{-0.03cm}\sigma)}{2}\hspace*{-0.03cm}\leq\hspace*{-0.03cm} \frac{c_3(3\hspace*{-0.03cm}+\hspace*{-0.03cm}\sigma)}{4}\hspace*{-0.03cm},\notag
\end{eqnarray}
we have
\begin{eqnarray}
\begin{aligned}\notag
&c_1\|\Pi\s^{k+1}\|^2+c_2\theta_{k+1}^2\|\Pi\z^{k+1}\|^2+c_3\|\Pi\x^{k+1}\|^2\\
&~~\leq\frac{3+\sigma}{4}\left(c_1\|\Pi\s^k\|^2+c_2\theta_k^2\|\Pi\z^k\|^2+c_3\|\Pi\x^k\|^2\right)+\frac{2c_1}{1-\sigma}\theta_k^2\Phi^k\\
&~~\leq\left(\frac{3+\sigma}{4}\right)^{k+1}\left(c_1\|\Pi\s^0\|^2+c_2\theta_0^2\|\Pi\z^0\|^2+c_3\|\Pi\x^0\|^2\right)+\frac{2c_1}{1-\sigma}\sum_{r=0}^k\left(\frac{3+\sigma}{4}\right)^{k-r}\theta_r^2\Phi^r.
\end{aligned}
\end{eqnarray}
From (\ref{cont5}), $c_2>c_3$, and the initialization such that $\Pi\x^0=\Pi\y^0=\Pi\z^0=0$, we have
\begin{eqnarray}
\begin{aligned}\notag
\|\Pi\y^{k+1}\|^2\leq&\frac{2}{c_3}\left(c_1\|\Pi\s^{k+1}\|^2+c_2\theta_{k+1}^2\|\Pi\z^{k+1}\|^2+c_3\|\Pi\x^{k+1}\|^2\right)\\
\leq&\left(\frac{3+\sigma}{4}\right)^{k+1}\frac{2c_1}{c_3}\|\Pi\s^0\|^2+\frac{4c_1}{c_3(1-\sigma)} \sum_{r=0}^k\left(\frac{3+\sigma}{4}\right)^{k-r}\theta_r^2\Phi^r\\
=&\left(\frac{3+\sigma}{4}\right)^{k+1}\frac{(1-\sigma)^2}{18(1+\frac{1}{\tau})L^2}\|\Pi\s^0\|^2+\frac{1-\sigma}{9(1+\frac{1}{\tau})L^2} \sum_{r=0}^k\left(\frac{3+\sigma}{4}\right)^{k-r}\theta_r^2\Phi^r,
\end{aligned}
\end{eqnarray}
which is exactly (\ref{cont9}).
\end{proof}

Having (\ref{cont9}) at hand, we are ready to bound the term $(a)$ by $(b)$ appeared in (\ref{cont3}). The remaining challenge is to upper bound the weighted cumulative consensus errors.
\begin{lemma}\label{lemma6}
Suppose that Assumptions \ref{assumption_f} and \ref{assumption_w} hold with $\mu=0$. Let the sequence $\{\theta_k\}_{k=0}^K$ satisfy $\frac{1-\theta_k}{\theta_k^2}=\frac{1}{\theta_{k-1}^2}$ with $\theta_0=1$, let $\alpha\leq\frac{(1-\sigma)^3}{80L\sqrt{1+\frac{1}{\tau}}}$. Then for algorithm (\ref{alg-tv-s1})-(\ref{alg-tv-s4}) with fixed gossip matrix $\W$, we have
\begin{eqnarray}
\begin{aligned}
&\max\left\{\sum_{k=0}^K\frac{L}{2m\theta_k^2}\|\Pi\y^k\|^2,\sum_{k=0}^K\frac{L}{2m\theta_k^2}\|\Pi\x^k\|^2\right\}\\
&~~\leq\frac{16}{3mL(1+\frac{1}{\tau})(1-\sigma)}\|\Pi\s^0\|^2+\frac{11}{mL(1+\frac{1}{\tau})(1-\sigma)^2}\sum_{r=0}^{K-1}\Phi^r,\label{cont28}
\end{aligned}
\end{eqnarray}
where $\tau$ and $\Phi^r$ are defined in Lemma \ref{lemma2}.
\end{lemma}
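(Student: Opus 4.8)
The plan is to feed the geometric-decay bound (\ref{cont9}) of Lemma \ref{lemma2} into the weighted sum and then control the polynomial weight $\frac{1}{\theta_k^2}$ against the geometric factor $\rho^k$. Since the initialization gives $\Pi\x^0=\Pi\z^0=0$, inequality (\ref{cont5}) forces $\Pi\y^0=0$, so the $k=0$ terms vanish and I only sum from $k=1$. Shifting the index in (\ref{cont9}) yields, for each $k\geq 1$, the bound $\|\Pi\y^k\|^2\leq C_1\rho^k+C_2\sum_{r=0}^{k-1}\rho^{k-1-r}\theta_r^2\Phi^r$, and the identical bound for $\|\Pi\x^k\|^2$ because the right-hand side of (\ref{cont9}) dominates both quantities. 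Hence it suffices to estimate $\frac{L}{2m}\sum_{k=1}^K\frac{1}{\theta_k^2}\big(C_1\rho^k+C_2\sum_{r=0}^{k-1}\rho^{k-1-r}\theta_r^2\Phi^r\big)$, and both maxima in the statement follow from the same computation.

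The first ingredient is a two-sided control of the Nesterov sequence. From $\frac{1-\theta_k}{\theta_k^2}=\frac{1}{\theta_{k-1}^2}$, i.e. $\frac{1}{\theta_k^2}=\frac{1}{\theta_{k-1}^2}+\frac{1}{\theta_k}$, writing $a_k=\frac{1}{\theta_k}$ and solving the resulting quadratic gives $a_k=\frac{1+\sqrt{1+4a_{k-1}^2}}{2}$, whence $\frac12+a_{k-1}\leq a_k\leq 1+a_{k-1}$ and therefore $\frac{k+2}{2}\leq\frac{1}{\theta_k}\leq k+1$. These two estimates simultaneously give the summability of $\sum_k\frac{\rho^k}{\theta_k^2}$ and the ratio bound $\frac{\theta_r^2}{\theta_k^2}\leq\frac{4(k+1)^2}{(r+2)^2}$ for $k\geq r$, which is exactly what the double sum needs.

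For the $C_1$ term I would bound $\frac{1}{\theta_k^2}\leq(k+1)^2$ and use the closed form $\sum_{k\geq 0}(k+1)^2\rho^k=\frac{1+\rho}{(1-\rho)^3}$. Since $1-\rho=\frac{1-\sigma}{4}$ and $C_1=\frac{(1-\sigma)^2}{18(1+1/\tau)L^2}\|\Pi\s^0\|^2$, the two powers of $(1-\sigma)$ in $C_1$ cancel two of the three in $\frac{1}{(1-\rho)^3}$, leaving one factor $\frac{1}{1-\sigma}$ and a numerical constant that checks against $\frac{16}{3}$. The substantive work is the $C_2$ term: exchanging the order of summation turns it into $\frac{LC_2}{2m}\sum_{r=0}^{K-1}\Phi^r\,S_r$ with kernel $S_r=\theta_r^2\sum_{k=r+1}^{K}\frac{\rho^{k-1-r}}{\theta_k^2}$, so everything reduces to bounding $S_r$ uniformly in $r$. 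Substituting $j=k-r-1$ and using the ratio bound together with the expansion $(j+r+2)^2=(r+2)^2+2(r+2)j+j^2$, the $(r+2)^2$ prefactor exactly cancels the leading term and gives $S_r\leq\frac{1}{1-\rho}+\frac{2}{r+2}\frac{\rho}{(1-\rho)^2}+\frac{1}{(r+2)^2}\frac{\rho(1+\rho)}{(1-\rho)^3}$, which is $O\big(\frac{1}{(1-\sigma)^3}\big)$ uniformly in $r$. Because $C_2=\frac{1-\sigma}{9(1+1/\tau)L^2}$ carries one factor of $(1-\sigma)$, the product $\frac{LC_2}{2m}S_r$ lands at order $\frac{1}{(1-\sigma)^2}$, matching the coefficient $\frac{11}{mL(1+1/\tau)(1-\sigma)^2}$.

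I expect this uniform kernel estimate $S_r=O\big(\frac{1}{(1-\sigma)^3}\big)$ to be the crux. The danger is the growing ratio $\frac{\theta_r^2}{\theta_k^2}\sim\frac{k^2}{r^2}$: a careless bound introduces a spurious extra factor of $\frac{1}{1-\sigma}$ and destroys the claimed rate. The observation that rescues the argument is that this quadratic growth is damped by the geometric weight $\rho^{k-1-r}$, so after the substitution $j=k-r-1$ the leading $(r+2)^2$ cancels against $\theta_r^2$, and the corrections linear and quadratic in $j$ are suppressed by $\frac{1}{r+2}$ and $\frac{1}{(r+2)^2}$ respectively. Tracking the explicit constants $9$, $18$, and $80$ coming from $C_1$, $C_2$, and the step-size choice in Lemma \ref{lemma2} then produces the stated numbers $\frac{16}{3}$ and $11$.
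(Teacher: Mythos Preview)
Your plan is sound and does produce the lemma, but it takes a noticeably different route from the paper, and there is one arithmetic slip worth flagging.

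\textbf{Comparison with the paper.} Both proofs start from (\ref{cont9}), note that $\Pi\y^0=\Pi\x^0=0$, and reduce everything to bounding the kernel $\theta_r^2\sum_{k=r+1}^K\rho^{k-1-r}/\theta_k^2$ uniformly in $r$. The difference is in how this kernel is controlled. You replace $\frac{1}{\theta_k}$ by explicit polynomial bounds $\frac{k+2}{2}\leq\frac{1}{\theta_k}\leq k+1$ and then appeal to closed-form generating-function identities for $\sum_j j^m\rho^j$. The paper instead exploits the \emph{exact} increment identities $\frac{1}{\theta_k^2}-\frac{1}{\theta_{k-1}^2}=\frac{1}{\theta_k}$ and $\frac{1}{\theta_k}-\frac{1}{\theta_{k-1}}\leq 1$ directly: writing $S=\sum_{k=r+1}^K\rho^k/\theta_k^2$, it computes $(1-\rho)S$ and then $(1-\rho)^2S$ by two rounds of Abel summation (shifting the index in $\rho^k$ and telescoping), which yields the clean closed form $S\leq\frac{3\rho^{r+1}}{(1-\rho)^3\theta_r^2}$, i.e.\ your $S_r\leq\frac{3}{(1-\rho)^3}$. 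Your approach is more elementary; the paper's is slicker, avoids the two-sided bounds on $\theta_k$, and gives a single explicit constant that is reused verbatim in the time-varying analysis (Lemma~\ref{lemma9}).

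\textbf{A slip in your kernel bound.} Your ratio bound $\theta_r^2/\theta_k^2\leq 4(k+1)^2/(r+2)^2$ carries a factor $4$ that has disappeared from your displayed expression for $S_r$; the correct inequality is
\[
S_r\;\leq\;4\left(\frac{1}{1-\rho}+\frac{2}{r+2}\,\frac{\rho}{(1-\rho)^2}+\frac{1}{(r+2)^2}\,\frac{\rho(1+\rho)}{(1-\rho)^3}\right).
\]
This does not break the argument: maximizing at $r=0$ and using $\rho=\frac{3+\sigma}{4}\in[\tfrac34,1)$ gives $\frac{LC_2}{2m}S_r\leq\frac{1}{18}\big(256(1-\rho)+64\rho+64\rho^2\big)\cdot\frac{1}{mL(1+1/\tau)(1-\sigma)^2}$, and the bracket is at most $148$ on $[\tfrac34,1)$, so the coefficient is below $8.3<11$. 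Your $C_1$ computation similarly lands below $\tfrac{16}{3}$. So the proposal is correct once the factor $4$ is restored.
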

\begin{proof}
We first give some properties of the sequence $\{\theta_k\}_{k=0}^K$. From $\frac{1-\theta_k}{\theta_k^2}=\frac{1}{\theta_{k-1}^2}$ and $\theta_0=1$, we have $\theta_k\leq\theta_{k-1}$, $\frac{1}{\theta_k}-1\leq \frac{1}{\theta_{k-1}}$, and $\frac{1}{\theta_k}-\frac{1}{2}\geq \frac{1}{\theta_{k-1}}$, which further give
\begin{eqnarray}
\frac{1}{\theta_k}-\frac{1}{\theta_{k-1}}\leq 1,\qquad \frac{1}{k+1}\leq\theta_k\leq \frac{2}{k+1}.\label{theta_pro}
\end{eqnarray}
From (\ref{cont9}), we get
\begin{eqnarray}
\begin{aligned}\label{cont27}
\sum_{k=1}^K\frac{L}{2m\theta_k^2}\|\Pi\y^k\|^2\leq& \sum_{k=1}^K\frac{C_1L\rho^k}{2m\theta_k^2}+\sum_{k=1}^K\frac{C_2L}{2m\theta_k^2} \sum_{r=0}^{k-1}\rho^{k-1-r}\theta_r^2\Phi^r\\
=&\frac{C_1L}{2m}\sum_{k=1}^K\frac{\rho^k}{\theta_k^2}+\frac{C_2L}{2m\rho}\sum_{k=1}^K\frac{\rho^k}{\theta_k^2} \sum_{r=0}^{k-1}\frac{\theta_r^2}{\rho^r}\Phi^r\\
=&\frac{C_1L}{2m}\sum_{k=1}^K\frac{\rho^k}{\theta_k^2}+\frac{C_2L}{2m\rho}\sum_{r=0}^{K-1}\frac{\theta_r^2}{\rho^r}\Phi^r\sum_{k=r+1}^K\frac{\rho^k}{\theta_k^2}.
\end{aligned}
\end{eqnarray}
Recall that for scalars, $\theta_k$ means the value at iteration $k$, while $\rho^k$ is its $k$th power. Next, we compute $\sum_{k=r+1}^K\frac{\rho^k}{\theta_k^2}$ for any $r\geq 0$. Denote $S=\sum_{k=r+1}^K\frac{\rho^k}{\theta_k^2}$ for simplicity. We have
\begin{equation}
\rho S=\sum_{k=r+1}^K\frac{\rho^{k+1}}{\theta_k^2}=\sum_{k=r+1}^K\frac{\rho^k}{\theta_{k-1}^2}-\frac{\rho^{r+1}}{\theta_r^2}+\frac{\rho^{K+1}}{\theta_K^2},\notag
\end{equation}
and
\begin{equation}
S-\rho S=\sum_{k=r+1}^K\rho^k\left(\frac{1}{\theta_k^2}-\frac{1}{\theta_{k-1}^2}\right)+\frac{\rho^{r+1}}{\theta_r^2}-\frac{\rho^{K+1}}{\theta_K^2}\overset{a}=\sum_{k=r+1}^K\frac{\rho^k}{\theta_k}+\frac{\rho^{r+1}}{\theta_r^2}-\frac{\rho^{K+1}}{\theta_K^2},\notag
\end{equation}
where we use $\frac{1-\theta_k}{\theta_k^2}=\frac{1}{\theta_{k-1}^2}$ in $\overset{a}=$. It further gives
\begin{equation}
\rho(1-\rho) S=\sum_{k=r+1}^K\frac{\rho^{k+1}}{\theta_k}+\frac{\rho^{r+2}}{\theta_r^2}-\frac{\rho^{K+2}}{\theta_K^2}=\sum_{k=r+1}^K\frac{\rho^k}{\theta_{k-1}}-\frac{\rho^{r+1}}{\theta_r}+\frac{\rho^{K+1}}{\theta_K}+\frac{\rho^{r+2}}{\theta_r^2}-\frac{\rho^{K+2}}{\theta_K^2},\notag
\end{equation}
and
\begin{eqnarray}
\begin{aligned}
(1-\rho)^2 S=&(1-\rho)S-\rho (1-\rho) S\\
=&\sum_{k=r+1}^K\rho^k\left(\frac{1}{\theta_k}-\frac{1}{\theta_{k-1}}\right)+\frac{\rho^{r+1}}{\theta_r^2}-\frac{\rho^{K+1}}{\theta_K^2}+\frac{\rho^{r+1}}{\theta_r}-\frac{\rho^{K+1}}{\theta_K}-\frac{\rho^{r+2}}{\theta_r^2}+\frac{\rho^{K+2}}{\theta_K^2}\notag\\
=&\sum_{k=r+1}^K\rho^k\left(\frac{1}{\theta_k}-\frac{1}{\theta_{k-1}}\right)+\frac{(1-\rho)\rho^{r+1}}{\theta_r^2}-\frac{(1-\rho)\rho^{K+1}}{\theta_K^2}+\frac{\rho^{r+1}}{\theta_r}-\frac{\rho^{K+1}}{\theta_K}\\
\overset{b}\leq&\sum_{k=r+1}^K\rho^k+\frac{(1-\rho)\rho^{r+1}}{\theta_r^2}+\frac{\rho^{r+1}}{\theta_r}\leq \frac{\rho^{r+1}}{1-\rho}+\frac{2\rho^{r+1}}{\theta_r^2},
\end{aligned}
\end{eqnarray}
where we use (\ref{theta_pro}) in $\overset{b}\leq$. Thus, we get
\begin{eqnarray}
\sum_{k=r+1}^K\frac{\rho^k}{\theta_k^2}\leq \frac{1}{(1-\rho)^2}\left(\frac{\rho^{r+1}}{1-\rho}+\frac{2\rho^{r+1}}{\theta_r^2}\right)\leq\frac{3\rho^{r+1}}{(1-\rho)^3\theta_r^2}.\label{cont25}
\end{eqnarray}
Plugging into (\ref{cont27}), it follows from $\Pi\y^0=0$ and $\theta_0=1$ that
\begin{eqnarray}
\begin{aligned}\notag
\sum_{k=0}^K\frac{L}{2m\theta_k^2}\|\Pi\y^k\|^2\leq&\frac{3C_1L\rho}{2m(1-\rho)^3}+\frac{3C_2L}{2m(1-\rho)^3}\sum_{r=0}^{K-1}\Phi^r\\
\leq&\frac{16}{3mL(1+\frac{1}{\tau})(1-\sigma)}\|\Pi\s^0\|^2+\frac{11}{mL(1+\frac{1}{\tau})(1-\sigma)^2}\sum_{r=0}^{K-1}\Phi^r,
\end{aligned}
\end{eqnarray}
where the last inequality uses the definitions of $C_1$, $C_2$, and $\rho$ given in Lemma \ref{lemma2}. Replacing $\|\Pi\y^k\|$ by $\|\Pi\x^k\|$ in the above analysis, we have the same bound for $\sum_{k=0}^K\frac{L}{2m\theta_k^2}\|\Pi\x^k\|^2$.
\end{proof}

In the next lemma, we bound the term $(c)$ by $(d)$ appeared in (\ref{cont4}) in a similar way to the proof of Lemma \ref{lemma6}.
\begin{lemma}\label{lemma7}
Suppose that Assumptions \ref{assumption_f} and \ref{assumption_w} hold with $\mu>0$. Let $\alpha\leq\frac{(1-\sigma)^3}{80L\sqrt{1+\frac{1}{\tau}}}$ and $\theta_k\equiv\theta=\frac{\sqrt{\mu\alpha}}{2}$. Then for algorithm (\ref{alg-tv-s1})-(\ref{alg-tv-s4}) with fixed gossip matrix $\W$, we have
\begin{eqnarray}
\begin{aligned}\label{cont29}
&\max\left\{\sum_{k=0}^K\frac{L}{2m(1-\theta)^{k+1}}\|\Pi\y^k\|^2,\sum_{k=0}^K\frac{L}{2m(1-\theta)^{k+1}}\|\Pi\x^k\|^2\right\}\\
&~~\leq \frac{4(1-\sigma)}{27mL(1+\frac{1}{\tau})(1-\theta)}\|\Pi\s^0\|^2+\frac{8\theta^2}{27mL(1+\frac{1}{\tau})}\sum_{r=0}^{K-1}\frac{\Phi^r}{(1-\theta)^{r+1}},
\end{aligned}
\end{eqnarray}
where $\tau$ and $\Phi^r$ are defined in Lemma \ref{lemma2}.
\end{lemma}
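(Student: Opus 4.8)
The plan is to mirror the proof of Lemma \ref{lemma6}, but to exploit that here the schedule is constant, $\theta_k\equiv\theta$, which makes every relevant sum genuinely geometric and removes the delicate telescoping of $\sum_{k=r+1}^K\rho^k/\theta_k^2$ needed in the nonstrongly convex case. First I would invoke Lemma \ref{lemma2}, which is stated for $\mu\geq 0$ and hence applies, with the constant weights $\theta_r\equiv\theta$, to obtain for every $k\geq 1$
\[
\max\left\{\|\Pi\y^k\|^2,\|\Pi\x^k\|^2\right\}\leq C_1\rho^k+C_2\theta^2\sum_{r=0}^{k-1}\rho^{k-1-r}\Phi^r,
\]
while $\|\Pi\y^0\|^2=\|\Pi\x^0\|^2=0$ by the initialization $\Pi\x^0=\Pi\y^0=\Pi\z^0=0$. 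I would then multiply by the weight $\frac{L}{2m(1-\theta)^{k+1}}$, sum over $k=0,\dots,K$, and split the result into a $C_1$-part and a $C_2$-part.

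The crucial preliminary estimate is that the geometric ratio $\frac{\rho}{1-\theta}$ stays strictly below $1$ with a controlled gap. Using $\mu\leq L$, $\sqrt{1+1/\tau}\geq 1$, and the step-size bound $\alpha\leq\frac{(1-\sigma)^3}{80L\sqrt{1+1/\tau}}$, I would bound $\theta=\frac{\sqrt{\mu\alpha}}{2}\leq\frac{(1-\sigma)^{1.5}}{2\sqrt{80}}\leq\frac{1-\sigma}{16}$, where the last step uses $(1-\sigma)^{1.5}\leq 1-\sigma$ and $2\sqrt{80}>16$. Since $\rho=1-\frac{1-\sigma}{4}$, this yields the gap $1-\theta-\rho=\frac{1-\sigma}{4}-\theta\geq\frac{3(1-\sigma)}{16}>0$, so $\frac{\rho}{1-\theta}<1$ and every geometric series below converges.

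For the $C_1$-part I would use $\sum_{k=1}^K\frac{\rho^k}{(1-\theta)^{k+1}}=\frac{1}{1-\theta}\sum_{k=1}^K\left(\frac{\rho}{1-\theta}\right)^k\leq\frac{1}{1-\theta}\cdot\frac{\rho}{1-\theta-\rho}$, and then substitute $C_1=\frac{(1-\sigma)^2}{18(1+1/\tau)L^2}\|\Pi\s^0\|^2$ together with $1-\theta-\rho\geq\frac{3(1-\sigma)}{16}$ and $\rho\leq 1$; this produces exactly the first term $\frac{4(1-\sigma)}{27mL(1+1/\tau)(1-\theta)}\|\Pi\s^0\|^2$. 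For the $C_2$-part I would exchange the order of summation to write it as $\sum_{r=0}^{K-1}\Phi^r\sum_{k=r+1}^K\frac{\rho^{k-1-r}}{(1-\theta)^{k+1}}$, evaluate the inner geometric series as $\frac{1}{(1-\theta)^{r+1}(1-\theta-\rho)}$, and insert $C_2=\frac{1-\sigma}{9(1+1/\tau)L^2}$ with the same gap estimate to obtain the second term $\frac{8\theta^2}{27mL(1+1/\tau)}\sum_{r=0}^{K-1}\frac{\Phi^r}{(1-\theta)^{r+1}}$. Since the right-hand side of Lemma \ref{lemma2} controls $\|\Pi\x^k\|^2$ by the same expression, the verbatim computation with $\|\Pi\y^k\|$ replaced by $\|\Pi\x^k\|$ yields the bound for the $\x$-sum, completing the maximum.

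I expect the main obstacle to be purely the constant bookkeeping rather than any conceptual difficulty: the whole argument hinges on quantitatively matching the gap $1-\theta-\rho\geq\frac{3(1-\sigma)}{16}$ forced by the step size to the precise constants $\frac{4}{27}$ and $\frac{8}{27}$ in the statement, so the delicate part is verifying that $\theta$ is a small enough fraction of $1-\rho=\frac{1-\sigma}{4}$. The comparison $\theta\sim(1-\sigma)^{1.5}$ against $1-\rho\sim(1-\sigma)$ is exactly what keeps this gap of the right order in $1-\sigma$ and makes the stated constants tight.
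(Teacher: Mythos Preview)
Your proposal is correct and follows essentially the same approach as the paper: invoke Lemma \ref{lemma2} with $\theta_r\equiv\theta$, establish $\theta\leq\frac{1-\sigma}{16}$ from the step-size bound so that $1-\theta-\rho\geq\frac{3(1-\sigma)}{16}$, and then sum the resulting geometric series in the $C_1$- and $C_2$-parts (the paper factors out $\rho^k/\rho^r$ before exchanging the order of summation, whereas you exchange first and substitute $j=k-r-1$, but the two are the same computation and lead to the identical constants $\frac{4}{27}$ and $\frac{8}{27}$).
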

\begin{proof}
From (\ref{cont9}), we get
\begin{eqnarray}
\begin{aligned}\notag
&\sum_{k=1}^K\frac{L}{2m(1-\theta)^{k+1}}\|\Pi\y^k\|^2\\
&~~\leq \sum_{k=1}^K\frac{C_1L\rho^k}{2m(1-\theta)^{k+1}}+\sum_{k=1}^K\frac{C_2L}{2m(1-\theta)^{k+1}} \sum_{r=0}^{k-1}\rho^{k-1-r}\theta^2\Phi^r\\
&~~=\frac{C_1L}{2m(1-\theta)}\sum_{k=1}^K\left(\frac{\rho}{1-\theta}\right)^k+\frac{\theta^2C_2L}{2m\rho(1-\theta)}\sum_{k=1}^K\left(\frac{\rho}{1-\theta}\right)^k\sum_{r=0}^{k-1}\frac{\Phi^r}{\rho^{r}}\\
&~~=\frac{C_1L}{2m(1-\theta)}\sum_{k=1}^K\left(\frac{\rho}{1-\theta}\right)^k+\frac{\theta^2C_2L}{2m\rho(1-\theta)}\sum_{r=0}^{K-1}\frac{\Phi^r}{\rho^{r}}\sum_{k=r+1}^K\left(\frac{\rho}{1-\theta}\right)^k.
\end{aligned}
\end{eqnarray}
From the settings of $\theta$ and $\alpha$, we know $\theta\leq\frac{1-\sigma}{16}$. Thus we have $\frac{\rho}{1-\theta}< 1$, $1-\theta-\rho\geq\frac{3(1-\sigma)}{16}$, and $\sum_{k=r+1}^K\left(\frac{\rho}{1-\theta}\right)^k\leq \left(\frac{\rho}{1-\theta}\right)^r\frac{\rho}{1-\theta-\rho}$ with $\rho=1-\frac{1-\sigma}{4}$. It follows from $\Pi\y^0=0$ that
\begin{eqnarray}
\begin{aligned}\notag
\sum_{k=0}^K\frac{L}{2m(1-\theta)^{k+1}}\|\Pi\y^k\|^2\leq& \frac{C_1L}{2m(1-\theta)}\frac{\rho}{1-\theta-\rho}+\frac{\theta^2C_2L}{2m(1-\theta-\rho)}\sum_{r=0}^{K-1}\frac{\Phi^r}{(1-\theta)^{r+1}}\\
\leq& \frac{4(1-\sigma)}{27mL(1+\frac{1}{\tau})(1-\theta)}\|\Pi\s^0\|^2+\frac{8\theta^2}{27mL(1+\frac{1}{\tau})}\sum_{r=0}^{K-1}\frac{\Phi^r}{(1-\theta)^{r+1}},
\end{aligned}
\end{eqnarray}
where the last inequality uses the definitions of $C_1$ and $C_2$ given in Lemma \ref{lemma2} and $1-\theta-\rho\geq\frac{3(1-\sigma)}{16}$. Replacing $\|\Pi\y^k\|$ by $\|\Pi\x^k\|$ in the above analysis, we have the same bound for $\Pi\x^k$.
\end{proof}

Now, we are ready to prove Theorems \ref{theorem1} and \ref{theorem2}. We first prove Theorem \ref{theorem1}. The crucial trick in this proof is to make the constant before $D_f(\overline x^k,\y^k)$ positive by setting the constant $\tau$ small, and make the constant before $\|\overline z^{t+1}-\overline z^t\|^2$ positive by setting the step size $\alpha$ small. This is the reason why we introduce the constant $\tau$ in the definition of $\Psi^r$ in (\ref{cont18}).
\begin{proof}
Plugging (\ref{cont28}) into (\ref{cont3}) and using the definition of $\Phi^r$ in (\ref{cont18}), we obtain
\begin{eqnarray}
\begin{aligned}\notag
&\frac{F(\overline x^{K+1})-F(x^*)}{\theta_K^2}+\frac{1}{2\alpha}\|\overline z^{K+1}-x^*\|^2\\
&~~\leq \frac{1}{2\alpha}\|\overline z^0-x^*\|^2+\frac{16}{3mL(1+\frac{1}{\tau})(1-\sigma)}\|\Pi\s^0\|^2\\
&~~\quad-\sum_{k=0}^K\left(\hspace*{-0.05cm}\left(\frac{1}{2\alpha}-\frac{L}{2}-\frac{22L}{(1-\sigma)^2}\right)\|\overline z^{t+1}-\overline z^t\|^2+\frac{1}{\theta_{k-1}^2}\left(\hspace*{-0.05cm}1-\frac{22(1+\tau)}{(1+\frac{1}{\tau})(1-\sigma)^2}\right)D_f(\overline x^k,\y^k)\hspace*{-0.05cm}\right)\\
&~~\overset{a}\leq \frac{1}{2\alpha}\|\overline z^0-x^*\|^2+\hspace*{-0.05cm}\frac{16}{3mL(1+\frac{1}{\tau})(1-\sigma)}\|\Pi\s^0\|^2-\hspace*{-0.05cm}\sum_{k=0}^K\hspace*{-0.05cm}\left(\hspace*{-0.05cm}\frac{1}{4\alpha}\|\overline z^{t+1}-\overline z^t\|^2+\frac{1}{2\theta_{k-1}^2}D_f(\overline x^k,\y^k)\hspace*{-0.05cm}\right)\\
&~~\leq \frac{1}{2\alpha}\|\overline z^0-x^*\|^2+\frac{1-\sigma}{8mL}\|\Pi\s^0\|^2-\frac{1}{5mL}\sum_{r=0}^{K-1}\Phi^r,
\end{aligned}
\end{eqnarray}
where in $\overset{a}\leq$ we let $\tau=\frac{(1-\sigma)^2}{44}$ so to have $\frac{22(1+\tau)}{(1+\frac{1}{\tau})(1-\sigma)^2}=\frac{1}{2}$, $\alpha\leq\frac{(1-\sigma)^4}{537L}\leq\frac{(1-\sigma)^3}{80L\sqrt{1+\frac{1}{\tau}}}$, and $\frac{1}{4\alpha}\geq\frac{L}{2}+\frac{22L}{(1-\sigma)^2}$. So we have
\begin{eqnarray}
\begin{aligned}\notag
&F(\overline x^{K+1})-F(x^*)\leq\theta_K^2\left(\frac{1}{2\alpha}\|\overline z^0-x^*\|^2+\frac{1-\sigma}{8mL}\|\Pi\s^0\|^2\right),\\
&\frac{1}{5mL}\sum_{r=0}^{K-1}\Phi^r\leq \frac{1}{2\alpha}\|\overline z^0-x^*\|^2+\frac{1-\sigma}{8mL}\|\Pi\s^0\|^2.
\end{aligned}
\end{eqnarray}
It follows from (\ref{cont28}) that
\begin{eqnarray}
\begin{aligned}\notag
&\max\left\{\sum_{k=0}^K\frac{L}{2m\theta_k^2}\|\Pi\y^k\|^2,\sum_{k=0}^K\frac{L}{2m\theta_k^2}\|\Pi\x^k\|^2\right\}\\
&~~\leq\frac{16}{3mL(1+\frac{1}{\tau})(1-\sigma)}\|\Pi\s^0\|^2+\frac{11}{mL(1+\frac{1}{\tau})(1-\sigma)^2}\sum_{r=0}^{K-1}\Phi^r\\
&~~\leq\frac{1-\sigma}{8mL}\|\Pi\s^0\|^2+\frac{1}{4mL}\sum_{r=0}^{K-1}\Phi^r\\
&~~\leq\frac{5}{8\alpha}\|\overline z^0-x^*\|^2+\frac{9(1-\sigma)}{32mL}\|\Pi\s^0\|^2.
\end{aligned}
\end{eqnarray}
From (\ref{theta_pro}), we have the conclusions.
\end{proof}

Next, we prove Theorem \ref{theorem2}.
\begin{proof}
Plugging (\ref{cont29}) into (\ref{cont4}) and using the definition of $\Phi^r$ in (\ref{cont18}), we have
\begin{eqnarray}
\begin{aligned}\notag
&\frac{1}{(1-\theta)^{K+1}}\left(F(\overline x^{K+1})-F(x^*)+\left(\frac{\theta^2}{2\alpha}+\frac{\mu\theta}{2}\right)\|\overline z^{K+1}-x^*\|^2\right)\\
&~~\leq F(\overline x^0)-F(x^*)+\left(\frac{\theta^2}{2\alpha}+\frac{\mu\theta}{2}\right)\|\overline z^0-x^*\|^2+\frac{4(1-\sigma)}{27mL(1+\frac{1}{\tau})(1-\theta)}\|\Pi\s^0\|^2\\
&~~\quad-\sum_{k=0}^K\left(\frac{1}{(1-\theta)^k}\left(1-\frac{16(1+\tau)}{27(1+\frac{1}{\tau})}\right)D_f(\overline x^k,\y^k)+\frac{1}{(1-\theta)^{k+1}}\left(\frac{\theta^2}{2\alpha}-\frac{L\theta^2}{2}-\frac{16L\theta^2}{27}\right)\|\overline z^{k+1}-\overline z^k\|^2\right)\\
&~~\overset{a}\leq F(\overline x^0)-F(x^*)+\left(\frac{\theta^2}{2\alpha}+\frac{\mu\theta}{2}\right)\|\overline z^0-x^*\|^2+\frac{4(1-\sigma)}{27m(1+\frac{1}{\tau})L(1-\theta)}\|\Pi\s^0\|^2\\
&~~\quad-\sum_{k=0}^K\left(\frac{1}{2(1-\theta)^k}D_f(\overline x^k,\y^k)+\frac{\theta^2}{4\alpha(1-\theta)^{k+1}}\|\overline z^{k+1}-\overline z^k\|^2\right)\\
&~~\leq F(\overline x^0)\hspace*{-0.025cm}-\hspace*{-0.025cm}F(x^*)\hspace*{-0.025cm}+\hspace*{-0.025cm}\left(\hspace*{-0.025cm}\frac{\theta^2}{2\alpha}\hspace*{-0.025cm}+\hspace*{-0.025cm}\frac{\mu\theta}{2}\hspace*{-0.025cm}\right)\hspace*{-0.025cm}\|\overline z^0\hspace*{-0.025cm}-\hspace*{-0.025cm}x^*\|^2\hspace*{-0.025cm}+\hspace*{-0.025cm}\frac{4(1-\sigma)}{59mL(1\hspace*{-0.025cm}-\hspace*{-0.025cm}\theta)}\|\Pi\s^0\|^2\hspace*{-0.025cm}-\hspace*{-0.025cm}\frac{8\theta^2}{59mL}\sum_{r=0}^{K-1}\frac{\Phi^r}{(1\hspace*{-0.025cm}-\hspace*{-0.025cm}\theta)^{r+1}},
\end{aligned}
\end{eqnarray}
where in $\overset{a}\leq$ we let $\tau=\frac{27}{32}$ so to have $\frac{16(1+\tau)}{27(1+\frac{1}{\tau})}=\frac{1}{2}$, $\alpha\leq\frac{(1-\sigma)^3}{119L}\leq\frac{(1-\sigma)^3}{80L\sqrt{1+\frac{1}{\tau}}}$, and $\frac{1}{4\alpha}\geq\frac{L}{2}+\frac{16L}{27}$. Thus, we have the first conclusion and
\begin{eqnarray}
\frac{8\theta^2}{59mL}\sum_{r=0}^{K-1}\frac{\Phi^r}{(1-\theta)^{r+1}}\leq F(\overline x^0)-F(x^*)+\left(\frac{\theta^2}{2\alpha}+\frac{\mu\theta}{2}\right)\|\overline z^0-x^*\|^2+\frac{4(1-\sigma)}{59mL(1-\theta)}\|\Pi\s^0\|^2.\notag
\end{eqnarray}
It follows from (\ref{cont29}) that
\begin{eqnarray}
\begin{aligned}\notag
&\sum_{k=0}^K\frac{L}{2m(1-\theta)^{k+1}}\|\Pi\x^k\|^2\\
&~~\leq \frac{4(1-\sigma)}{27mL(1+\frac{1}{\tau})(1-\theta)}\|\Pi\s^0\|^2+\frac{8\theta^2}{27mL(1+\frac{1}{\tau})}\sum_{r=0}^{K-1}\frac{\Phi^r}{(1-\theta)^{r+1}}\\
&~~=\frac{4(1-\sigma)}{59mL(1-\theta)}\|\Pi\s^0\|^2+\frac{8\theta^2}{59mL}\sum_{r=0}^{K-1}\frac{\Phi^r}{(1-\theta)^{r+1}}\\
&~~\leq2\left(F(\overline x^0)-F(x^*)+\left(\frac{\theta^2}{2\alpha}+\frac{\mu\theta}{2}\right)\|\overline z^0-x^*\|^2+\frac{4(1-\sigma)}{59mL(1-\theta)}\|\Pi\s^0\|^2\right).
\end{aligned}
\end{eqnarray}
Thus, we have the second conclusion.
\end{proof}

We end this section by summarizing the differences from \citep{qu2017-2} and the reasons of the convergence rates improvement.
\begin{remark}\label{remark3}
As shown in Lemmas \ref{lemma4} and \ref{lemma2}, we keep the Bregman divergence term $D_f(\overline x^k,\y^k)$, and use a constant $\tau$ to balance this divergence term and $\|\overline z^{k+1}-\overline z^k\|^2$. As shown in the proofs of Theorems \ref{theorem1} and \ref{theorem2}, we make the constant before $D_f(\overline x^k,\y^k)$ positive by setting $\tau$ small. As a comparison, \citet{qu2017-2} did not use this Bregman divergence term, and they bounded the term $\|\overline x^k-\overline y^k\|^2$, which is generated by the consensus errors and is an analogy to our term $D_f(\overline x^k,\y^k)$ generated in (\ref{cont15}), by setting much smaller step sizes than ours. See (32) and (53) in \citep{qu2017-2} for the details. To make the constant $A_4$ in their (32) positive, \citet{qu2017-2} set the step size of the order $\alpha=\bO(\frac{1}{L}(\frac{\mu}{L})^{3/7})$. Since $\sqrt{\mu\alpha}$ dominates the convergence rate for strongly convex problems, \citet{qu2017-2} only got the slower convergence rate of $\bO((1-(\frac{\mu}{L})^{5/7})^k)$. For nonstrongly convex problems, \citet{qu2017-2} set the step size of the order $\bO(\frac{1}{k^{0.6+\epsilon}})$ to bound the corresponding term in their (53), which gives the slower convergence rate of $\bO(\frac{1}{k^{1.4-\epsilon}})$.

As shown in Lemma \ref{lemma2}, to bound the consensus errors, we construct a linear combination of the consensus errors such that it shrinks geometrically with an additional error term. As a comparison, \citet{qu2017-2} used the linear system inequality, which needs to upper bound the spectral radius of a system matrix and thus it is quite involved. See the proofs of Lemmas 7 and 13-15 in \citep{qu2017-2}. Our proof is much simpler than those in \citep{qu2017-2}, and it can be extended to the time-varying graphs in a unified framework.
\end{remark}

\subsection{Bounding the Consensus Errors over Time-varying Graphs}\label{sec:proof3}
In this section, we consider algorithm (\ref{alg-tv-s1})-(\ref{alg-tv-s4}) over time-varying graphs. Our analysis follows the same proof framework in the previous section for static graphs, but with more involved details. In the next lemma, we first give the analogy counterparts of (\ref{cont10})-(\ref{cont12}).

\begin{lemma}\label{lemma1}
Suppose that Assumptions \ref{assumption_f} and \ref{assumption_w_tv} hold with $\mu\geq 0$. Let the sequence $\{\theta_k\}_{k=0}^K$ satisfy $\frac{\theta_k}{1.62}\leq \theta_{k+1}\leq \theta_k\leq 1$. Then, we have for any $k\geq\gamma-1$,
\begin{eqnarray}
&&\hspace*{-0.5cm}\|\Pi\s^{k+1}\|^2\leq \frac{1+\sigma_{\gamma}}{2}\|\Pi\s^{k-\gamma+1}\|^2+\frac{2\gamma}{1-\sigma_{\gamma}}\sum_{r=k-\gamma+1}^k\left(\theta_r^2\Phi^r+c_0\theta_r^2\|\Pi\z^r\|^2+c_0\|\Pi\x^r\|^2\right),\label{cont23}\\
&&\hspace*{-0.5cm}\|\Pi\x^{k+1}\|^2\leq \frac{1+\sigma_{\gamma}}{2}\|\Pi\x^{k-\gamma+1}\|^2+\frac{5.5\gamma}{1-\sigma_{\gamma}}\sum_{r=k-\gamma+2}^{k+1}\theta_r^2\|\Pi\z^r\|^2,\label{cont20}\\
&&\hspace*{-0.5cm}\theta_{k+1}^2\|\Pi\z^{k+1}\|^2\leq\frac{1+\sigma_{\gamma}}{2}\theta_{k-\gamma+1}^2\|\Pi\z^{k-\gamma+1}\|^2+\frac{4\gamma}{1-\sigma_{\gamma}}\sum_{r=k-\gamma+1}^k\left(\mu^2\alpha^2\|\Pi\x^r\|^2+\alpha^2\|\Pi\s^r\|^2\right),\label{cont21}
\end{eqnarray}
where we denote $c_0=4L^2\left(1+\frac{1}{\tau}\right)$, and $\tau$ and $\Phi^r$ are defined in Lemma \ref{lemma2}.
\end{lemma}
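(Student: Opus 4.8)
The plan is to treat the three recursions (\ref{alg-tv-s2})--(\ref{alg-tv-s4}) as linear systems in the consensus errors and \emph{unroll each one over a full window of $\gamma$ steps}, rather than one step at a time as in the static derivation (\ref{cont10})--(\ref{cont12}). The key algebraic fact is that each doubly stochastic $\W_r^k$ commutes with $\Pi$ (as used in (\ref{consensus-contraction})), so when a recursion is iterated $\gamma$ times the weight matrices accumulate into a single product $\W_r^{k,\gamma}=\W_r^k\cdots\W_r^{k-\gamma+1}$ acting on the variable $\gamma$ steps back, while every cross term carries only a partial product of at most $\gamma$ weight matrices. I would then bound the leading term by the $\gamma$-step contraction (\ref{tau_consensus-contraction}), which supplies the factor $\sigma_\gamma$ and requires exactly $k\geq\gamma-1$, and bound each cross-term product by the non-expansion estimate (\ref{tau_consensus-contraction2}) (or by (\ref{tau_consensus-contraction}) for the single borderline product of length $\gamma$), each contributing a factor at most $1$. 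This converts every recursion into a first-degree bound of the shape $\|\Pi(\cdot)^{k+1}\|\leq\sigma_\gamma\|\Pi(\cdot)^{k-\gamma+1}\|+(\text{a sum of }\gamma\text{ error terms})$.

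To pass from these first-degree bounds to the squared inequalities (\ref{cont23})--(\ref{cont21}), I would square and apply Young's inequality with parameter $\tfrac{1-\sigma_\gamma}{2\sigma_\gamma}$, so that $\sigma_\gamma^2\bigl(1+\tfrac{1-\sigma_\gamma}{2\sigma_\gamma}\bigr)=\tfrac{\sigma_\gamma(1+\sigma_\gamma)}{2}\leq\tfrac{1+\sigma_\gamma}{2}$ produces the contraction factor, while the cross term is absorbed at the cost of $1+\tfrac{2\sigma_\gamma}{1-\sigma_\gamma}\leq\tfrac{2}{1-\sigma_\gamma}$; a Cauchy--Schwarz step then turns the squared sum of $\gamma$ error terms into $\gamma$ times a sum of squares, which is the source of the extra factor $\gamma$ in every error coefficient. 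For (\ref{cont23}) this directly yields the prefactor $\tfrac{2\gamma}{1-\sigma_\gamma}$, and it remains only to substitute the gradient-difference estimate (\ref{cont15}) for each $\|\nabla f(\y^r)-\nabla f(\y^{r-1})\|^2$; the one-step index shift in (\ref{cont15}) realigns the summation range to $r=k-\gamma+1,\dots,k$, exactly as claimed.

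For the $\x$-recursion (\ref{alg-tv-s4}) the unrolling introduces $\z^r$ with the \emph{shifted} weight $\theta_{r-1}$, so after squaring I would invoke the hypothesis $\theta_r/1.62\leq\theta_{r+1}$ to replace $\theta_{r-1}^2$ by $1.62^2\theta_r^2\leq 2.63\,\theta_r^2$; combined with the factors $\tfrac{2}{1-\sigma_\gamma}$ and $\gamma$ this gives $\tfrac{5.5\gamma}{1-\sigma_\gamma}$, and this is the only place the lower bound on the ratio $\theta_{r+1}/\theta_r$ is needed. For the $\z$-recursion (\ref{alg-tv-s3}) I would first rewrite it, exactly as in the derivation of (\ref{cont7}) through (\ref{cont5}), as $\Pi\z^{k+1}=c_k\Pi\W_2^k\Pi\z^k+e_k\Pi\W_2^k\Pi\x^k-\tfrac{\alpha}{\theta_k+\mu\alpha}\Pi\s^k$ with scalar coefficients $c_k=\tfrac{(1+\mu\alpha)\theta_k}{\theta_k+\mu\alpha}\leq1$ and $e_k\leq\tfrac{\mu\alpha}{\theta_k}$, so that unrolling produces a leading $\z^{k-\gamma+1}$ term with accumulated coefficient $\prod c_j\leq1$ and $\x^r,\s^r$ error terms carrying $\tfrac{\mu\alpha}{\theta_r}$ and $\tfrac{\alpha}{\theta_r}$. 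After squaring I would multiply through by $\theta_{k+1}^2$ and use monotonicity $\theta_{k+1}\leq\theta_{k-\gamma+1}$ and $\theta_{k+1}\leq\theta_r$ to clear the $1/\theta_r^2$ factors, delivering (\ref{cont21}) with prefactor $\tfrac{4\gamma}{1-\sigma_\gamma}$ (the extra factor $2$ relative to the $\s$ case coming from the two error families $\x$ and $\s$).

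The main obstacle is the bookkeeping for the $\z$-recursion: because its scalar coefficients depend on $\theta_k$ and $\mu\alpha$ and the $\x,\s$ errors enter with $1/\theta_r$ weights, I must verify that after $\gamma$ iterations every accumulated scalar coefficient stays bounded by $1$ (using $c_j\leq1$ and $\theta_k\leq1$) and that the $\theta_{k+1}^2$-weighting placed on the left of (\ref{cont21}) is precisely what is needed to absorb the $1/\theta_r^2$ terms by monotonicity. Keeping the numerical constants ($2$, $4$, $5.5$) correct through the Young and Cauchy--Schwarz steps, while ensuring that every partial product has length at most $\gamma$ so that (\ref{tau_consensus-contraction})--(\ref{tau_consensus-contraction2}) apply, is the delicate part; the contraction structure $\tfrac{1+\sigma_\gamma}{2}$ and the error scaling $\tfrac{\gamma}{1-\sigma_\gamma}$ themselves follow routinely from the window-unrolling described above.
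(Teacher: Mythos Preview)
Your proposal is correct and follows essentially the same route as the paper's proof: unroll each of (\ref{alg-tv-s2})--(\ref{alg-tv-s4}) over a window of $\gamma$ steps, apply the $\gamma$-step contraction (\ref{tau_consensus-contraction}) to the leading term and the non-expansion (\ref{tau_consensus-contraction2}) to the cross terms, then square with Young's inequality (parameter $\tfrac{1-\sigma_\gamma}{2\sigma_\gamma}$) plus Cauchy--Schwarz, substitute (\ref{cont15}) for the gradient differences, and use $\theta_{r-1}\leq1.62\,\theta_r$ and the monotonicity of $\theta_k$ exactly where you indicate. The numerical constants $2$, $5.5$, and $4$ arise precisely as you describe.
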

\begin{proof}
From (\ref{alg-tv-s2}) and the definition of $W^{k,\gamma}$ in (\ref{def_W_tau}), we have for any $k\geq\gamma-1$,
\begin{eqnarray}
\begin{aligned}\notag
\s^{k+1}=&W^{k+1}\s^k+\nabla f(\y^{k+1})-\nabla f(\y^k)\\
=&\left(\prod_{t=k-\gamma+2}^{k+1}W^t\right)\s^{k-\gamma+1}+\sum_{r=k-\gamma+1}^k \left(\prod_{t=r+1}^kW^{t+1}\right)(\nabla f(\y^{r+1})-\nabla f(\y^r))\\
=&W^{k+1,\gamma}\s^{k-\gamma+1}+\sum_{r=k-\gamma+1}^k W^{k+1,k-r}(\nabla f(\y^{r+1})-\nabla f(\y^r)),
\end{aligned}
\end{eqnarray}
where we denote $\prod_{t=k+1}^{k}W^{t+1}=\I$. Multiplying both sides by $\Pi$, using (\ref{tau_consensus-contraction}) and (\ref{tau_consensus-contraction2}), it gives
\begin{eqnarray}
\|\Pi\s^{k+1}\|\leq \sigma_{\gamma}\|\Pi\s^{k-\gamma+1}\|+\sum_{r=k-\gamma+1}^k \|\nabla f(\y^{r+1})-\nabla f(\y^r)\|.\label{cont13}
\end{eqnarray}
Similar to (\ref{cont10}), squaring both sides of (\ref{cont13}) yields
\begin{eqnarray}
\qquad\begin{aligned}\label{cont19}
\|\Pi\s^{k+1}\|^2\leq& \frac{1+\sigma_{\gamma}}{2}\|\Pi\s^{k-\gamma+1}\|^2+\frac{2}{1-\sigma_{\gamma}}\left(\sum_{r=k-\gamma+1}^k\|\nabla f(\y^{r+1})-\nabla f(\y^r)\|\right)^2\\
\leq& \frac{1+\sigma_{\gamma}}{2}\|\Pi\s^{k-\gamma+1}\|^2+\frac{2\gamma}{1-\sigma_{\gamma}}\sum_{r=k-\gamma+1}^k\|\nabla f(\y^{r+1})-\nabla f(\y^r)\|^2.
\end{aligned}
\end{eqnarray}
From (\ref{cont15}), we have (\ref{cont23}).  It follows from (\ref{alg-tv-s4}) that
\begin{eqnarray}
\begin{aligned}\notag
\x^{k+1}=&(1-\theta_k)W^k\x^k+\theta_k\z^{k+1}\\
=&\left(\prod_{t=k-\gamma+1}^k(1-\theta_t)W^t\right)\x^{k-\gamma+1}+\sum_{r=k-\gamma+1}^k\left( \prod_{t=r+1}^k(1-\theta_t)W^t\right)\theta_r\z^{r+1}\\
=&W^{k,\gamma}\x^{k-\gamma+1}\prod_{t=k-\gamma+1}^k(1-\theta_t)+\sum_{r=k-\gamma+1}^kW^{k,k-r}\theta_r\z^{r+1}\prod_{t=r+1}^k(1-\theta_t).
\end{aligned}
\end{eqnarray}
Similar to (\ref{cont13}) and (\ref{cont19}), we also have
\begin{eqnarray}
\|\Pi\x^{k+1}\|\leq\sigma_{\gamma}\|\Pi\x^{k-\gamma+1}\|+\sum_{r=k-\gamma+1}^k\theta_r\|\Pi\z^{r+1}\|=\sigma_{\gamma}\|\Pi\x^{k-\gamma+1}\|+\sum_{r=k-\gamma+2}^{k+1}\theta_{r-1}\|\Pi\z^r\|,\notag
\end{eqnarray}
and
\begin{eqnarray}
\begin{aligned}\notag
\|\Pi\x^{k+1}\|^2\leq& \frac{1+\sigma_{\gamma}}{2}\|\Pi\x^{k-\gamma+1}\|^2+\frac{2\gamma}{1-\sigma_{\gamma}}\sum_{r=k-\gamma+2}^{k+1}\theta_{r-1}^2\|\Pi\z^r\|^2.
\end{aligned}
\end{eqnarray}
Using $\theta_{r-1}\leq 1.62\theta_r$, we obtain (\ref{cont20}). Similarly, for (\ref{alg-tv-s3}), we have
\begin{eqnarray}
\begin{aligned}\notag
\z^{k+1}=&\frac{\theta_k}{\theta_k+\mu\alpha}\W^k\z^k+\frac{\mu\alpha}{\theta_k+\mu\alpha}W^k\y^k-\frac{\alpha}{\theta_k+\mu\alpha}\s^k\\
\overset{a}=&\frac{\theta_k(1+\mu\alpha)}{\theta_k+\mu\alpha}\W^k\z^k+\frac{\mu\alpha(1-\theta_k)}{\theta_k+\mu\alpha}W^k\x^k-\frac{\alpha}{\theta_k+\mu\alpha}\s^k\\
=&\left(\prod_{t=k-\gamma+1}^k\frac{\theta_t(1+\mu\alpha)}{\theta_t+\mu\alpha}W^t\right)\z^{k-\gamma+1}\\
&+\sum_{r=k-\gamma+1}^k \left(\prod_{t=r+1}^k\frac{\theta_t(1+\mu\alpha)}{\theta_t+\mu\alpha}W^t\right)\left(\frac{\mu\alpha(1-\theta_r)}{\theta_r+\mu\alpha}W^r\x^r-\frac{\alpha}{\theta_r+\mu\alpha}\s^r\right)\\
=&W^{k,\gamma}\z^{k-\gamma+1}\prod_{t=k-\gamma+1}^k\frac{\theta_t(1+\mu\alpha)}{\theta_t+\mu\alpha}\\
&+\sum_{r=k-\gamma+1}^kW^{k,k-r}\left(\frac{\mu\alpha(1-\theta_r)}{\theta_r+\mu\alpha}W^r\x^r-\frac{\alpha}{\theta_r+\mu\alpha}\s^r\right)\prod_{t=r+1}^k\frac{\theta_t(1+\mu\alpha)}{\theta_t+\mu\alpha},
\end{aligned}
\end{eqnarray}
and
\begin{eqnarray}
\begin{aligned}\notag
\|\Pi\z^{k+1}\|\overset{b}\leq& \sigma_{\gamma}\|\Pi\z^{k-\gamma+1}\|+\sum_{r=k-\gamma+1}^k\left(\frac{\mu\alpha(1-\theta_r)}{\theta_r+\mu\alpha}\|\Pi\x^r\|+\frac{\alpha}{\theta_r+\mu\alpha}\|\Pi\s^r\|\right)\\
\leq&\sigma_{\gamma}\|\Pi\z^{k-\gamma+1}\|+\sum_{r=k-\gamma+1}^k\left(\frac{\mu\alpha}{\theta_r}\|\Pi\x^r\|+\frac{\alpha}{\theta_r}\|\Pi\s^r\|\right),
\end{aligned}
\end{eqnarray}
where we use (\ref{alg-tv-s1}) in $\overset{a}=$, $\frac{\theta_t(1+\mu\alpha)}{\theta_t+\mu\alpha}\leq1$ with $\theta_t\leq 1$ in $\overset{b}\leq$. Similar to (\ref{cont19}), squaring both sides yields
\begin{eqnarray}
\|\Pi\z^{k+1}\|^2\leq \frac{1+\sigma_{\gamma}}{2}\|\Pi\z^{k-\gamma+1}\|^2+\frac{4\gamma}{1-\sigma_{\gamma}}\sum_{r=k-\gamma+1}^k\left(\frac{\mu^2\alpha^2}{\theta_r^2}\|\Pi\x^r\|^2+\frac{\alpha^2}{\theta_r^2}\|\Pi\s^r\|^2\right).\notag
\end{eqnarray}
Multiplying both sides by $\theta_{k+1}^2$ and using the non-increasing of $\{\theta_k\}$, it further gives (\ref{cont21})
\end{proof}

Motivated by the proof of Lemma \ref{lemma2}, we want to construct a linear combination of the consensus errors. However, due to the time-varying graphs and the $\gamma$-step joint spectrum property in Assumption \ref{assumption_w_tv}, we see from (\ref{cont23})-(\ref{cont21}) that they shrink every $\gamma$ iterations, rather than every iteration. By exploiting the special structures in (\ref{cont23})-(\ref{cont21}), we define the following quantities:
\begin{eqnarray}
\begin{aligned}\notag
&\M_{\s}^{k+\gamma,\gamma}=\max_{r=k+1,...,k+\gamma}\|\Pi\s^r\|^2,\quad
\M_{\x}^{k+\gamma,\gamma}=\max_{r=k+1,...,k+\gamma}\|\Pi\x^r\|^2,\\
&\M_{\y}^{k+\gamma,\gamma}=\max_{r=k+1,...,k+\gamma}\|\Pi\y^r\|^2,\quad
\M_{\z}^{k+\gamma,\gamma}=\max_{r=k+1,...,k+\gamma}\theta_r^2\|\Pi\z^r\|^2.
\end{aligned}
\end{eqnarray}
Motivated by (\ref{cont9}), we define the following quantity in the form of summation, instead of the maximum, and we sum up to $k+\gamma-1$, rather than $k+\gamma$,
\begin{equation}
\cS_{\phi}^{k+\gamma-1,\gamma}=\sum_{r=k}^{k+\gamma-1}\theta_r^2\Phi^r.\notag
\end{equation}
The next lemma is an analogy counterpart of Lemma \ref{lemma2}. Unlike the classical analysis relying on the small gain theorem \citep{shi2017}, which is unclear how to be used to the accelerated methods, and especially for nonstrongly convex problems, our main idea is to construct a linear combination of $\M_{\s}^{k+\gamma,\gamma}$, $\M_{\x}^{k+\gamma,\gamma}$, and $\M_{\z}^{k+\gamma,\gamma}$ with carefully designed weights such that it shrinks geometrically with the additional error term $\cS_{\phi}^{k+\gamma-1,\gamma}$, which is crucial to extend our analysis over static graphs to time-varying graphs in a unified framework, both for nonstrongly convex and strongly convex problems. Moreover, our proof technique to bound the consensus errors can be embedded into many algorithm frameworks, because it is separated from the analysis of the inexact accelerated gradient descent in Lemma \ref{lemma4}.
\begin{lemma}\label{lemma3}
Under the settings of Lemma \ref{lemma1}, letting $\alpha\leq\frac{(1-\sigma_{\gamma})^3}{3385L\gamma^3\sqrt{1+\frac{1}{\tau}}}$, we have for any $t\geq 0$,
\begin{eqnarray}
\begin{aligned}\label{cont24}
\max\left\{\M_{\y}^{(t+1)\gamma,\gamma},\M_{\x}^{(t+1)\gamma,\gamma}\right\}\leq C_3\rho^{t\gamma}+C_4\sum_{s=0}^{(t+1)\gamma-1}\rho^{(t-1)\gamma-s}\theta_s^2\Phi^s.
\end{aligned}
\end{eqnarray}
where $\rho=\sqrt[\gamma]{1-\frac{1-\sigma_{\gamma}}{5}}$, $C_3=\left(\frac{(1-\sigma_{\gamma})^2}{162L^2\gamma^2(1+\frac{1}{\tau})}\M_{\s}^{\gamma,\gamma}+\frac{442\gamma^2}{(1-\sigma_{\gamma})^2}\M_{\z}^{\gamma,\gamma}+2\M_{\x}^{\gamma,\gamma}\right)$, and $C_4=\frac{1-\sigma_{\gamma}}{38L^2\gamma(1+\frac{1}{\tau})}$.
\end{lemma}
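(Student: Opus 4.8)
The plan is to replicate the argument of Lemma \ref{lemma2}, but carried out at the level of the windowed quantities $\M_{\s}^{\cdot,\gamma}$, $\M_{\x}^{\cdot,\gamma}$, and $\M_{\z}^{\cdot,\gamma}$, since the per-iteration contractions of the static case are here replaced by the $\gamma$-step contractions (\ref{cont23})-(\ref{cont21}). First I would convert these three per-index recursions into recursions among the maxima over consecutive length-$\gamma$ windows. Fix $t$ and let the running index $k+1$ range over the window $[t\gamma+1,(t+1)\gamma]$; then the reach-back index $k-\gamma+1$ ranges over $[(t-1)\gamma+1,t\gamma]$, which lies entirely in the previous window, so the leading terms $\tfrac{1+\sigma_{\gamma}}{2}\|\Pi\s^{k-\gamma+1}\|^2$, etc., are each bounded by $\tfrac{1+\sigma_{\gamma}}{2}\M_{\s}^{t\gamma,\gamma}$ (resp.\ $\M_{\x}^{t\gamma,\gamma}$, $\M_{\z}^{t\gamma,\gamma}$). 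Taking the maximum over the window and using the ratio bounds $\theta_{r-1}\leq 1.62\theta_r$ from Lemma \ref{lemma1} to line up indices, I obtain window-to-window inequalities of the form $\M_{\s}^{(t+1)\gamma,\gamma}\leq \tfrac{1+\sigma_{\gamma}}{2}\M_{\s}^{t\gamma,\gamma}+(\text{forcing})$, and similarly for $\M_{\x}$ and $\M_{\z}$.

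The subtlety, and the main obstacle, is that the forcing sums $\sum_{r=k-\gamma+1}^{k}$ in (\ref{cont23})-(\ref{cont21}) run over $r$ up to $(t+1)\gamma-1$, which already belongs to the current window. Hence the current-window maxima $\M_{\x}^{(t+1)\gamma,\gamma}$ and $\M_{\z}^{(t+1)\gamma,\gamma}$ reappear on the right-hand sides, with coefficients carrying a factor $\gamma$ from the window sum and the constant $c_0=4L^2(1+\tfrac1\tau)$ from (\ref{cont15}). These self-referential terms must be absorbed. I would handle this exactly as in Lemma \ref{lemma2}: form the combination $c_1\M_{\s}^{(t+1)\gamma,\gamma}+c_2\M_{\z}^{(t+1)\gamma,\gamma}+c_3\M_{\x}^{(t+1)\gamma,\gamma}$ with weights scaled so that the large $c_0$-terms are dominated, i.e.\ $c_3\sim \tfrac{\gamma^2}{(1-\sigma_{\gamma})^2}c_0c_1$ and $c_2\sim\tfrac{\gamma^4}{(1-\sigma_{\gamma})^4}c_0c_1$, while the $\alpha$-dependent cross terms (the $\mu^2\alpha^2$ and $\alpha^2$ contributions in (\ref{cont21})) are forced to be negligible by the step-size restriction $\alpha\leq\tfrac{(1-\sigma_{\gamma})^3}{3385L\gamma^3\sqrt{1+1/\tau}}$. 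The extra powers of $\gamma$ relative to the static bound $\alpha\leq\tfrac{(1-\sigma)^3}{80L\sqrt{1+1/\tau}}$ come precisely from these window-sum factors and the overlap of adjacent windows.

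After the absorption, the combination contracts by a factor $1-\tfrac{1-\sigma_{\gamma}}{5}$ per window (the weaker constant $\tfrac15$ in place of the static $\tfrac14$ accounting for the margin consumed by the overlap), plus a forcing term proportional to $c_1\tfrac{2\gamma}{1-\sigma_{\gamma}}\cS_{\phi}^{(t+1)\gamma-1,\gamma}$. Writing $\rho^{\gamma}=1-\tfrac{1-\sigma_{\gamma}}{5}$ and unrolling over $t$ windows gives a geometric term $\rho^{t\gamma}$ times the initial combination at the first window (which produces $C_3$ after normalization by the weights) plus the accumulated double sum $\sum_s\rho^{(t-1)\gamma-s}\theta_s^2\Phi^s$, matching the right-hand side of (\ref{cont24}). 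Finally, I would recover $\M_{\y}$ from $\M_{\x}$ and $\M_{\z}$ through the triangle-type bound $\|\Pi\y^k\|\leq\theta_k\|\Pi\z^k\|+\|\Pi\x^k\|$ of (\ref{cont5}), so that $\M_{\y}^{(t+1)\gamma,\gamma}\leq 2\M_{\z}^{(t+1)\gamma,\gamma}+2\M_{\x}^{(t+1)\gamma,\gamma}$ is controlled by the same combination; dividing through by $c_3$ and using $c_2>c_3$ yields the explicit constants $C_3$ and $C_4=\tfrac{1-\sigma_{\gamma}}{38L^2\gamma(1+1/\tau)}$. The bulk of the work is the bookkeeping in the absorption step, where all the $\gamma$- and $(1-\sigma_{\gamma})$-powers must be tracked simultaneously to certify both the per-window contraction factor and the stated step size.
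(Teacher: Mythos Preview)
Your proposal is correct and follows essentially the same route as the paper: convert the $\gamma$-step recursions (\ref{cont23})--(\ref{cont21}) into window-level inequalities by taking maxima, absorb the self-referential current-window contributions via the weighted combination with $c_3\sim \gamma^2 c_0 c_1/(1-\sigma_{\gamma})^2$ and $c_2\sim \gamma^4 c_0 c_1/(1-\sigma_{\gamma})^4$, use the step-size bound to kill the $\alpha^2$-terms, obtain the per-window contraction $1-\tfrac{1-\sigma_{\gamma}}{5}$, unroll, and recover $\M_{\y}$ through (\ref{cont5}). One small refinement to note: the $\Phi$-forcing actually spans \emph{two} consecutive windows (the paper writes $\cS_{\phi}^{k-1,\gamma}+\cS_{\phi}^{k+\gamma-1,\gamma}$ rather than a single $\cS_{\phi}^{(t+1)\gamma-1,\gamma}$), and these two pieces are merged into the single sum $\sum_{s}\rho^{(t-1)\gamma-s}\theta_s^2\Phi^s$ only after unrolling.
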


\begin{proof}
For any $t$ satisfying $k\leq t\leq k+\gamma-1$ with $k\geq \gamma$, we can relax (\ref{cont23}) to
\begin{eqnarray}
\begin{aligned}\notag
\|\Pi\s^{t+1}\|^2\leq& \frac{1+\sigma_{\gamma}}{2}\|\Pi\s^{t-\gamma+1}\|^2+\frac{2\gamma}{1-\sigma_{\gamma}}\sum_{r=t-\gamma+1}^{t}\left(\theta_r^2\Phi^r+c_0\theta_r^2\|\Pi\z^r\|^2+c_0\|\Pi\x^r\|^2\right)\\
\leq& \frac{1\hspace*{-0.035cm}+\hspace*{-0.035cm}\sigma_{\gamma}}{2}\|\Pi\s^{t-\gamma+1}\|^2\hspace*{-0.035cm}+\hspace*{-0.035cm}\frac{2\gamma}{1\hspace*{-0.035cm}-\hspace*{-0.035cm}\sigma_{\gamma}}\hspace*{-0.035cm}\sum_{r=k-\gamma}^{k+\gamma-1}\hspace*{-0.035cm}\theta_r^2\Phi^r\hspace*{-0.035cm}+\hspace*{-0.035cm}\frac{2\gamma}{1\hspace*{-0.035cm}-\hspace*{-0.035cm}\sigma_{\gamma}}\hspace*{-0.035cm}\sum_{r=k-\gamma+1}^{k+\gamma}\hspace*{-0.06cm}\left(c_0\theta_r^2\|\Pi\z^r\|^2\hspace*{-0.035cm}+\hspace*{-0.035cm}c_0\|\Pi\x^r\|^2\right)\\
\leq& \frac{1+\sigma_{\gamma}}{2}\|\Pi\s^{t-\gamma+1}\|^2+\frac{2\gamma}{1-\sigma_{\gamma}}\left(\cS_{\phi}^{k-1,\gamma}+\cS_{\phi}^{k+\gamma-1,\gamma}\right)\\
&+\frac{2\gamma}{1-\sigma_{\gamma}}\sum_{r=k-\gamma+1}^{k+\gamma}\left(c_0\M_{\z}^{k,\gamma}+c_0\M_{\z}^{k+\gamma,\gamma}+c_0\M_{\x}^{k,\gamma}+c_0\M_{\x}^{k+\gamma,\gamma}\right)\\
=& \frac{1+\sigma_{\gamma}}{2}\|\Pi\s^{t-\gamma+1}\|^2+\frac{2\gamma}{1-\sigma_{\gamma}}\left(\cS_{\phi}^{k-1,\gamma}+\cS_{\phi}^{k+\gamma-1,\gamma}\right)\\
&+\frac{4\gamma^2}{1-\sigma_{\gamma}}\left(c_0\M_{\z}^{k,\gamma}+c_0\M_{\z}^{k+\gamma,\gamma}+c_0\M_{\x}^{k,\gamma}+c_0\M_{\x}^{k+\gamma,\gamma}\right).
\end{aligned}
\end{eqnarray}
Taking the maximum over $t=k,k+1,...,k+\gamma-1$ on both sides, we have
\begin{eqnarray}
\begin{aligned}\notag
\M_{\s}^{k+\gamma,\gamma}\leq& \frac{1+\sigma_{\gamma}}{2}\M_{\s}^{k,\gamma}+\frac{2\gamma}{1-\sigma_{\gamma}}\left(\cS_{\phi}^{k-1,\gamma}+\cS_{\phi}^{k+\gamma-1,\gamma}\right)\\
&+\frac{4\gamma^2}{1-\sigma_{\gamma}}\left(c_0\M_{\z}^{k,\gamma}+c_0\M_{\z}^{k+\gamma,\gamma}+c_0\M_{\x}^{k,\gamma}+c_0\M_{\x}^{k+\gamma,\gamma}\right).
\end{aligned}
\end{eqnarray}
Similarly, for (\ref{cont21}) and (\ref{cont20}), we also have
\begin{eqnarray}
\begin{aligned}\notag
\M_{\z}^{k+\gamma,\gamma}\leq& \frac{1+\sigma_{\gamma}}{2}\M_{\z}^{k,\gamma}+\frac{8\gamma^2}{1-\sigma_{\gamma}}\left(\mu^2\alpha^2\M_{\x}^{k,\gamma}+\mu^2\alpha^2\M_{\x}^{k+\gamma,\gamma}+\alpha^2\M_{\s}^{k,\gamma}+\alpha^2\M_{\s}^{k+\gamma,\gamma}\right),\\
\M_{\x}^{k+\gamma,\gamma}\leq& \frac{1+\sigma_{\gamma}}{2}\M_{\x}^{k,\gamma}+\frac{11\gamma^2}{1-\sigma_{\gamma}}\left(\M_{\z}^{k,\gamma}+\M_{\z}^{k+\gamma,\gamma}\right),
\end{aligned}
\end{eqnarray}
where for the second one, the relaxation of $\sum_{r=t-\gamma+2}^{t+1}\theta_r^2\|\Pi\z^r\|^2\leq \sum_{r=k-\gamma+1}^{k+\gamma}\theta_r^2\|\Pi\z^r\|^2$ also holds for any $t$ satisfying $k\leq t\leq k+\gamma-1$.

Adding the above three inequalities together with weights $c_1$, $c_2$, and $c_3$, respectively, we have
\begin{eqnarray}
\begin{aligned}\notag
&c_1\M_{\s}^{k+\gamma,\gamma}+c_2\M_{\z}^{k+\gamma,\gamma}+c_3\M_{\x}^{k+\gamma,\gamma}\\
&~~\leq \frac{8c_2\gamma^2\alpha^2}{1-\sigma_{\gamma}}\M_{\s}^{k+\gamma,\gamma}+\left( \frac{4c_1c_0\gamma^2}{1-\sigma_{\gamma}}+\frac{11c_3\gamma^2}{1-\sigma_{\gamma}} \right)\M_{\z}^{k+\gamma,\gamma}+\left( \frac{4c_1c_0\gamma^2}{1-\sigma_{\gamma}}+\frac{8c_2\gamma^2\mu^2\alpha^2}{1-\sigma_{\gamma}} \right)\M_{\x}^{k+\gamma,\gamma}\\
&~~\quad+\left(\frac{c_1(1+\sigma_{\gamma})}{2}+\frac{8c_2\gamma^2\alpha^2}{1-\sigma_{\gamma}}  \right)\M_{\s}^{k,\gamma}+\left(\frac{c_2(1+\sigma_{\gamma})}{2}+\frac{4c_1c_0\gamma^2}{1-\sigma_{\gamma}}+\frac{11c_3\gamma^2}{1-\sigma_{\gamma}}  \right)\M_{\z}^{k,\gamma}\\
&~~\quad +\left(\frac{c_3(1+\sigma_{\gamma})}{2}+\frac{4c_1c_0\gamma^2}{1-\sigma_{\gamma}}+\frac{8c_2\gamma^2\mu^2\alpha^2}{1-\sigma_{\gamma}}  \right)\M_{\x}^{k,\gamma}+\frac{2c_1\gamma}{1-\sigma_{\gamma}}\left(\cS_{\phi}^{k-1,\gamma}+\cS_{\phi}^{k+\gamma-1,\gamma}\right).
\end{aligned}
\end{eqnarray}
We want to choose $c_1$, $c_2$, $c_3$, and $\alpha$ such that the following inequalities hold,
\begin{eqnarray}
\begin{aligned}\notag
&\frac{8c_2\gamma^2\alpha^2}{1-\sigma_{\gamma}}\leq\frac{c_1(1-\sigma_{\gamma})}{20},\hspace*{2.7cm}\frac{c_1(1+\sigma_{\gamma})}{2}+\frac{8c_2\gamma^2\alpha^2}{1-\sigma_{\gamma}}\leq\frac{c_1(3+\sigma_{\gamma})}{4},\\
\\
& \frac{4c_1c_0\gamma^2}{1-\sigma_{\gamma}}+\frac{11c_3\gamma^2}{1-\sigma_{\gamma}}\leq\frac{c_2(1-\sigma_{\gamma})}{20},\hspace*{1.25cm}\frac{c_2(1+\sigma_{\gamma})}{2}+\frac{4c_1c_0\gamma^2}{1-\sigma_{\gamma}}+\frac{11c_3\gamma^2}{1-\sigma_{\gamma}}\leq \frac{c_2(3+\sigma_{\gamma})}{4},\\
\\
&\frac{4c_1c_0\gamma^2}{1-\sigma_{\gamma}}+\frac{8c_2\gamma^2\mu^2\alpha^2}{1-\sigma_{\gamma}}\leq\frac{c_3(1-\sigma_{\gamma})}{20},\qquad \frac{c_3(1+\sigma_{\gamma})}{2}+\frac{4c_1c_0\gamma^2}{1-\sigma_{\gamma}}+\frac{8c_2\gamma^2\mu^2\alpha^2}{1-\sigma_{\gamma}}\leq\frac{c_3(3+\sigma_{\gamma})}{4},
\end{aligned}
\end{eqnarray}
which are satisfied if the three inequalities in the left column hold. Accordingly, we can choose $c_3=\frac{81c_1c_0\gamma^2}{(1-\sigma_{\gamma})^2}$, $c_2=\frac{17900c_1c_0\gamma^4}{(1-\sigma_{\gamma})^4}\geq\frac{80c_1c_0\gamma^2}{(1-\sigma_{\gamma})^2}+\frac{220c_3\gamma^2}{(1-\sigma_{\gamma})^2}$, and $\alpha^2\leq\min\left\{\frac{(1-\sigma_{\gamma})^6}{2864000c_0\gamma^6},\frac{(1-\sigma_{\gamma})^4}{2864000\mu^2\gamma^4}\right\}$. Thus,  we have for any $k\geq\gamma$,
\begin{eqnarray}
\begin{aligned}\notag
&\frac{19+\sigma_{\gamma}}{20}\left(c_1\M_{\s}^{k+\gamma,\gamma}+c_2\M_{\z}^{k+\gamma,\gamma}+c_3\M_{\x}^{k+\gamma,\gamma}\right)\\
&~~\leq \frac{3+\sigma_{\gamma}}{4}\left(c_1\M_{\s}^{k,\gamma}+c_2\M_{\z}^{k,\gamma}+c_3\M_{\x}^{k,\gamma}\right)+\frac{2c_1\gamma}{1-\sigma_{\gamma}}\left(\cS_{\phi}^{k-1,\gamma}+\cS_{\phi}^{k+\gamma-1,\gamma}\right)\\
&~~\leq \frac{19+\sigma_{\gamma}}{20}\left(1-\frac{1-\sigma_{\gamma}}{5}\right)\left(c_1\M_{\s}^{k,\gamma}+c_2\M_{\z}^{k,\gamma}+c_3\M_{\x}^{k,\gamma}\right)+\frac{2c_1\gamma}{1-\sigma_{\gamma}}\left(\cS_{\phi}^{k-1,\gamma}+\cS_{\phi}^{k+\gamma-1,\gamma}\right),
\end{aligned}
\end{eqnarray}
and
\begin{eqnarray}
\begin{aligned}\label{cont22}
&c_1\M_{\s}^{(t+1)\gamma,\gamma}+c_2\M_{\z}^{(t+1)\gamma,\gamma}+c_3\M_{\x}^{(t+1)\gamma,\gamma}\\
&~~\leq\left(1-\frac{1-\sigma_{\gamma}}{5}\right)^t\left(c_1\M_{\s}^{\gamma,\gamma}+c_2\M_{\z}^{\gamma,\gamma}+c_3\M_{\x}^{\gamma,\gamma}\right)\\
&~~\quad+\frac{40c_1\gamma}{19(1-\sigma_{\gamma})}\sum_{r=1}^t\left(1-\frac{1-\sigma_{\gamma}}{5}\right)^{t-r}\left(\cS_{\phi}^{r\gamma-1,\gamma}+\cS_{\phi}^{(r+1)\gamma-1,\gamma}\right).
\end{aligned}
\end{eqnarray}
It follows from (\ref{cont5}) and $c_2>c_3$ that
\begin{eqnarray}
\frac{c_3}{2}\max\left\{\M_{\y}^{(t+1)\gamma,\gamma},\M_{\x}^{(t+1)\gamma,\gamma}\right\}\leq c_1\M_{\s}^{(t+1)\gamma,\gamma}+c_2\M_{\z}^{(t+1)\gamma,\gamma}+c_3\M_{\x}^{(t+1)\gamma,\gamma}.\notag
\end{eqnarray}
On the other hand, denoting $\rho=\sqrt[\gamma]{1-\frac{1-\sigma_{\gamma}}{5}}$, we have
\begin{eqnarray}
\begin{aligned}\notag
&\sum_{r=1}^t\rho^{\gamma(t-r)}\left(\cS_{\phi}^{r\gamma-1,\gamma}+\cS_{\phi}^{(r+1)\gamma-1,\gamma}\right)\\
&~~=\rho^{\gamma t}\sum_{r=1}^t\left(\frac{1}{\rho^{\gamma}}\right)^r\left(\sum_{s=(r-1)\gamma}^{r\gamma-1}\theta_s^2\Phi^s+\sum_{s=r\gamma}^{(r+1)\gamma-1}\theta_s^2\Phi^s\right)\\
&~~=\rho^{\gamma t}\sum_{s=0}^{t\gamma-1}\left(\frac{1}{\rho^{\gamma}}\right)^{\lfloor\frac{s}{\gamma}\rfloor+1}\theta_s^2\Phi^s+\rho^{\gamma t}\sum_{s=\gamma}^{(t+1)\gamma-1}\left(\frac{1}{\rho^{\gamma}}\right)^{\lfloor\frac{s}{\gamma}\rfloor}\theta_s^2\Phi^s\\
&~~\leq2\rho^{\gamma t}\sum_{s=0}^{(t+1)\gamma-1}\left(\frac{1}{\rho^{\gamma}}\right)^{\frac{s}{\gamma}+1}\theta_s^2\Phi^s=2\sum_{s=0}^{(t+1)\gamma-1}\rho^{(t-1)\gamma-s}\theta_s^2\Phi^s.
\end{aligned}
\end{eqnarray}
Plugging the above two inequalities and the settings of $c_3$ and $c_0$ into (\ref{cont22}), we have the conclusion.
\end{proof}

\begin{remark}
We briefly demonstrate the advantage of introducing the quantities of $\M_{\s}^{k+\gamma,\gamma}$, $\M_{\x}^{k+\gamma,\gamma}$, $\M_{\y}^{k+\gamma,\gamma}$, and $\M_{\z}^{k+\gamma,\gamma}$. As discussed in Remark \ref{remark3}, researchers in the control community often use linear system inequality to prove the convergence, which is quite challenging to use over time-varying graphs. For example, \citet{tvab20} constructed a $\gamma$th order linear system inequality in the form of
    \begin{eqnarray}
    \left(
  \begin{array}{c}
    \alpha^{k+\gamma}\\
    \alpha^{k+\gamma-1}\\
    \alpha^{k+\gamma-2}\\
    \vdots\\
    \alpha^{k+1}
  \end{array}
\right)\leq \left(
  \begin{array}{lllll}
    M_1 & M_2 & \cdots & M_{\gamma-1} & M_{\gamma}\\
    \I  &     &        &              &           \\
        & \I  &        &              &           \\
        &     &  \ddots&              &           \\
        &     &        &    \I        &           \\
  \end{array}
\right)\left(
  \begin{array}{c}
    \alpha^{k+\gamma-1}\\
    \alpha^{k+\gamma-2}\\
    \alpha^{k+\gamma-3}\\
    \vdots\\
    \alpha^{k}
  \end{array}
\right)\label{cont44}
    \end{eqnarray}
    for the $\mathcal{A}\mathcal{B}$/push-pull method, which is an extension of gradient tracking to time-varying directed graphs. They only proved that the spectral radius of the system matrix is strictly less than 1 without any explicit upper bound. Thus, no explicit convergence rate was given in \citep{tvab20}.

    On the other hand, the system (\ref{cont44}) can be simplified by defining similar quantities of $\M_{\s}^{k+\gamma,\gamma}$, $\M_{\x}^{k+\gamma,\gamma}$, $\M_{\y}^{k+\gamma,\gamma}$, and $\M_{\z}^{k+\gamma,\gamma}$. Moreover, the proof can be further simplified by avoiding  analyzing the spectral radius if our technical trick of constructing the linear combination is used.
\end{remark}

Following the same proof framework over static graphs, our next step is to bound the weighted cumulative consensus errors.  However, the details are much more complex. The proof of Lemma \ref{lemma6} provides some insights.

\begin{lemma}\label{lemma9}
Suppose that Assumptions \ref{assumption_f} and \ref{assumption_w_tv} hold with $\mu=0$. Let the sequence $\{\theta_k\}_{k=0}^{T\gamma}$ satisfy $\frac{1-\theta_k}{\theta_k^2}=\frac{1}{\theta_{k-1}^2}$ with $\theta_0=1$, let $\alpha\leq\frac{(1-\sigma_{\gamma})^3}{3385L\gamma^3\sqrt{1+\frac{1}{\tau}}}$. Then for algorithm (\ref{alg-tv-s1})-(\ref{alg-tv-s4}), we have
\begin{eqnarray}
\begin{aligned}\label{cont30}
&\max\left\{\sum_{k=0}^{T\gamma}\frac{L}{2m\theta_k^2}\|\Pi\y^k\|^2,\sum_{k=0}^{T\gamma}\frac{L}{2m\theta_k^2}\|\Pi\x^k\|^2\right\}\\
&~~\leq\frac{235\gamma^3C_3L}{m(1-\sigma_{\gamma})^3}+\frac{10\gamma^2}{mL(1+\frac{1}{\tau})(1-\sigma_{\gamma})^2}\sum_{s=0}^{T\gamma-1}\Phi^s,
\end{aligned}
\end{eqnarray}
where $\tau$ and $\Phi^r$ are defined in Lemma \ref{lemma2}, and $C_3$ is defined in Lemma \ref{lemma3}.
\end{lemma}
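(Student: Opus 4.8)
The plan is to mimic the proof of Lemma \ref{lemma6}, using the block-level estimate (\ref{cont24}) from Lemma \ref{lemma3} as the time-varying analogue of (\ref{cont9}). The first step is to pass from the block maxima to per-iteration bounds. For any $k$ with $t\gamma+1\leq k\leq(t+1)\gamma$, i.e. $t=\lfloor(k-1)/\gamma\rfloor$, we have $\|\Pi\y^k\|^2\leq\M_{\y}^{(t+1)\gamma,\gamma}$; and since $t\gamma\geq k-\gamma$ and $(t-1)\gamma\geq k-2\gamma$, together with $\rho<1$, (\ref{cont24}) yields
\begin{equation}
\|\Pi\y^k\|^2\leq C_3\rho^{k-\gamma}+C_4\rho^{-2\gamma}\sum_{s=0}^{(t+1)\gamma-1}\rho^{k-s}\theta_s^2\Phi^s.\notag
\end{equation}
Here $\rho^{-\gamma}=\left(1-\frac{1-\sigma_{\gamma}}{5}\right)^{-1}\leq\frac{5}{4}$ is an absolute constant, so after this conversion the right-hand side has exactly the geometric-plus-convolution form of the static case.

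The second step is to multiply by $\frac{L}{2m\theta_k^2}$ and sum over $k=1,\dots,T\gamma$, handling the two terms separately. The crucial observation is that the auxiliary estimate (\ref{cont25}), namely $\sum_{k=r+1}^K\frac{\rho^k}{\theta_k^2}\leq\frac{3\rho^{r+1}}{(1-\rho)^3\theta_r^2}$, was derived purely from the recurrence $\frac{1-\theta_k}{\theta_k^2}=\frac{1}{\theta_{k-1}^2}$ and the elementary bounds (\ref{theta_pro}), so it remains valid verbatim for the present value of $\rho$. For the geometric part I would apply (\ref{cont25}) with $r=0$ (recall $\theta_0=1$), giving $\sum_k\frac{\rho^{k-\gamma}}{\theta_k^2}\leq\rho^{-\gamma}\frac{3\rho}{(1-\rho)^3}$. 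For the convolution part I would exchange the order of summation, collect for each fixed $s$ the factor $\theta_s^2\Phi^s$, and bound the remaining $\sum_k\frac{\rho^{k-s}}{\theta_k^2}$ again by (\ref{cont25}); the $\theta_s^2$ then cancels against the $\frac{1}{\theta_s^2}$ it produces, leaving $\sum_s\Phi^s$ with a constant of order $\frac{1}{(1-\rho)^3}$.

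The final step is to convert the $\rho$-dependence back to $\sigma_{\gamma}$ and $\gamma$. Using the elementary inequality $1-\rho=1-\left(1-\frac{1-\sigma_{\gamma}}{5}\right)^{1/\gamma}\geq\frac{1-\sigma_{\gamma}}{5\gamma}$, we get $\frac{1}{(1-\rho)^3}\leq\frac{125\gamma^3}{(1-\sigma_{\gamma})^3}$. The geometric term then contributes $\frac{L}{2m}C_3\cdot\bO\!\left(\frac{\gamma^3}{(1-\sigma_{\gamma})^3}\right)$, matching $\frac{235\gamma^3C_3L}{m(1-\sigma_{\gamma})^3}$; and since $C_4=\frac{1-\sigma_{\gamma}}{38L^2\gamma(1+\frac{1}{\tau})}$ carries a compensating factor $\frac{1-\sigma_{\gamma}}{\gamma}$, the convolution term contributes $\frac{L}{2m}C_4\cdot\bO\!\left(\frac{\gamma^3}{(1-\sigma_{\gamma})^3}\right)\sum_s\Phi^s=\bO\!\left(\frac{\gamma^2}{mL(1+\frac{1}{\tau})(1-\sigma_{\gamma})^2}\right)\sum_s\Phi^s$, matching the second target term. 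Rerunning the argument with $\|\Pi\x^k\|$ in place of $\|\Pi\y^k\|$, and using $\Pi\y^0=0$ to start the sum cleanly at $k=1$, gives the $\max\{\cdot,\cdot\}$ statement.

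The main obstacle I anticipate is bookkeeping in the convolution step rather than any conceptual difficulty. The inner sum in (\ref{cont24}) runs up to $s=(t+1)\gamma-1$, which can exceed $k$ by as much as $\gamma-1$, so the terms with $s\geq k$ have $\rho^{k-s}\geq1$ and must be absorbed using the $\rho^{-2\gamma}$ prefactor and the fact that there are only $\bO(\gamma)$ such terms with $\theta_k\approx\theta_s$. One must also verify that exchanging the summation order correctly identifies, for each $s$, the range of blocks $t$ (equivalently iterations $k\gtrsim s-\gamma$) that contribute, and that the $t=0$ block is treated separately. These are precisely the places where the surplus $\gamma$-factors—and the slack between the clean constants computed above and the stated $235$ and $10$—are absorbed.
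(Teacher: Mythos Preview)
Your proposal is correct and follows essentially the same route as the paper: bound each $\|\Pi\y^k\|^2$ by the block maximum $\M_{\y}^{(t+1)\gamma,\gamma}$, invoke (\ref{cont24}), reduce to weighted geometric sums handled by the purely $\theta$-recurrence estimate (\ref{cont25}), and finally convert $1-\rho$ back to $\frac{1-\sigma_\gamma}{5\gamma}$ via (\ref{cont26}). The paper keeps the block double sum $\sum_{t}\sum_{r}$ explicit rather than passing to a per-iteration bound, and it spells out the order-exchange in the convolution as (\ref{cont36}) by splitting $s\le\gamma-1$ versus $s\ge\gamma$---precisely the bookkeeping you flag as the main obstacle---but the argument is otherwise identical.
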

\begin{proof}
We first verify $\theta_k\leq 1.62\theta_{k+1}$ for all $k\geq 0$, which is required in Lemmas \ref{lemma1} and \ref{lemma3}. In fact, from $\frac{1-\theta_{k+1}}{\theta_{k+1}^2}=\frac{1}{\theta_k^2}$ and $\theta_0=1$, we have  $\frac{\theta_k}{\theta_{k+1}}=\frac{1}{\sqrt{1-\theta_{k+1}}}\in (1,\frac{1}{\sqrt{1-\theta_1}}]\in (1,1.62]$ for any $k\geq 0$. Next, we upper and lower bound $\rho$. From the definition of $\rho=\sqrt[\gamma]{1-\frac{1-\sigma_{\tau}}{5}}$ and the fact that $(1-\frac{x}{\gamma})^{\gamma}\geq 1-x$ for any $x\in (0,1)$ and $\gamma\geq1$, we know
\begin{equation}
\rho\leq 1-\frac{1-\sigma_{\gamma}}{5\gamma},\qquad \rho^{\gamma}\geq\frac{4}{5}.\label{cont26}
\end{equation}
The remaining proof is similar to that of Lemma \ref{lemma6}. From the definition of $\M_{\y}^{t\gamma+\gamma,\gamma}$ and (\ref{cont24}), we have
\begin{eqnarray}
\begin{aligned}\label{cont35}
&\sum_{k=1}^{T\gamma}\frac{L}{2m\theta_k^2}\|\Pi\y^k\|^2=\sum_{t=0}^{T-1}\sum_{r=1}^{\gamma}\frac{L}{2m\theta_{t\gamma+r}^2}\|\Pi\y^{t\gamma+r}\|^2\leq\sum_{t=0}^{T-1}\sum_{r=1}^{\gamma}\frac{L}{2m\theta_{t\gamma+r}^2}\M_{\y}^{(t+1)\gamma,\gamma}\\
&~~\leq \sum_{t=0}^{T-1}\sum_{r=1}^{\gamma}\frac{C_3L\rho^{t\gamma}}{2m\theta_{t\gamma+r}^2}+\sum_{t=0}^{T-1}\sum_{r=1}^{\gamma}\frac{C_4L}{2m\theta_{t\gamma+r}^2} \sum_{s=0}^{(t+1)\gamma-1}\rho^{(t-1)\gamma-s}\theta_s^2\Phi^s\\
&~~\leq \frac{C_3L}{2m}\sum_{t=0}^{T-1}\sum_{r=1}^{\gamma}\frac{\rho^{t\gamma+r}}{\rho^{\gamma}\theta_{t\gamma+r}^2}+\frac{C_4L}{2m}\sum_{t=0}^{T-1}\sum_{r=1}^{\gamma}\frac{\rho^{t\gamma+r}}{\rho^{2\gamma}\theta_{t\gamma+r}^2} \sum_{s=0}^{(t+1)\gamma-1}\frac{\theta_s^2}{\rho^s}\Phi^s\\
&~~\overset{a}= \frac{C_3L}{2m\rho^{\gamma}}\sum_{k=1}^{T\gamma}\frac{\rho^k}{\theta_k^2}+\frac{C_4L}{2m\rho^{2\gamma}}\sum_{k=1}^{T\gamma}\frac{\rho^{k}}{\theta_k^2} \sum_{s=0}^{\lceil\frac{k}{\gamma}\rceil\gamma-1}\frac{\theta_s^2}{\rho^s}\Phi^s,
\end{aligned}
\end{eqnarray}
where $(t+1)\gamma-1=\lceil\frac{k}{\gamma}\rceil\gamma-1$ in $\overset{a}=$ comes from the variable substitution $k=t\gamma+r$ with $r=1,2,...,\gamma$. Next, we compute the second part in $\overset{a}=$. It gives
\begin{eqnarray}
\begin{aligned}\label{cont36}
&\sum_{k=1}^{T\gamma}\frac{\rho^{k}}{\theta_k^2} \sum_{s=0}^{\lceil\frac{k}{\gamma}\rceil\gamma-1}\frac{\theta_s^2}{\rho^s}\Phi^s\\
&~~\leq\sum_{k=1}^{T\gamma-\gamma}\frac{\rho^{k}}{\theta_k^2} \sum_{s=0}^{k+\gamma-1}\frac{\theta_s^2}{\rho^s}\Phi^s+\sum_{k=T\gamma-\gamma+1}^{T\gamma}\frac{\rho^{k}}{\theta_k^2} \sum_{s=0}^{T\gamma-1}\frac{\theta_s^2}{\rho^s}\Phi^s\\
&~~=\left(\sum_{k=1}^{T\gamma-\gamma}\frac{\rho^{k}}{\theta_k^2} \sum_{s=0}^{\gamma-1}\frac{\theta_s^2}{\rho^s}\Phi^s+\sum_{k=1}^{T\gamma-\gamma}\frac{\rho^{k}}{\theta_k^2} \sum_{s=\gamma}^{k+\gamma-1}\frac{\theta_s^2}{\rho^s}\Phi^s\right)+\sum_{s=0}^{T\gamma-1}\frac{\theta_s^2}{\rho^s}\Phi^s\sum_{k=T\gamma-\gamma+1}^{T\gamma}\frac{\rho^{k}}{\theta_k^2}\\
&~~=\sum_{s=0}^{\gamma-1}\frac{\theta_s^2}{\rho^s}\Phi^s\sum_{k=1}^{T\gamma-\gamma}\frac{\rho^{k}}{\theta_k^2}+\sum_{s=\gamma}^{T\gamma-1}\frac{\theta_s^2}{\rho^s}\Phi^s\sum_{k=s-\gamma+1}^{T\gamma-\gamma}\frac{\rho^{k}}{\theta_k^2}\\
&~~\quad+\left(\sum_{s=0}^{\gamma-1}\frac{\theta_s^2}{\rho^s}\Phi^s\sum_{k=T\gamma-\gamma+1}^{T\gamma}\frac{\rho^{k}}{\theta_k^2}+\sum_{s=\gamma}^{T\gamma-1}\frac{\theta_s^2}{\rho^s}\Phi^s\sum_{k=T\gamma-\gamma+1}^{T\gamma}\frac{\rho^{k}}{\theta_k^2}\right)\\
&~~=\sum_{s=0}^{\gamma-1}\frac{\theta_s^2}{\rho^s}\Phi^s\sum_{k=1}^{T\gamma}\frac{\rho^{k}}{\theta_k^2}+\sum_{s=\gamma}^{T\gamma-1}\frac{\theta_s^2}{\rho^s}\Phi^s\sum_{k=s-\gamma+1}^{T\gamma}\frac{\rho^{k}}{\theta_k^2}.
\end{aligned}
\end{eqnarray}
Plugging (\ref{cont36}) into (\ref{cont35}), it follows from $\Pi\y^0=0$ that
\begin{eqnarray}
\hspace*{-0.7cm}\begin{aligned}\notag
&\sum_{k=0}^{T\gamma}\frac{L}{2m\theta_k^2}\|\Pi\y^k\|^2\\
&~~\leq\frac{C_3L}{2m\rho^{\gamma}}\sum_{k=1}^{T\gamma}\frac{\rho^k}{\theta_k^2}+\frac{C_4L}{2m\rho^{2\gamma}}\left(\sum_{s=0}^{\gamma-1}\frac{\theta_s^2}{\rho^s}\Phi^s\sum_{k=1}^{T\gamma}\frac{\rho^{k}}{\theta_k^2}+\sum_{s=\gamma}^{T\gamma-1}\frac{\theta_s^2}{\rho^s}\Phi^s\sum_{k=s-\gamma+1}^{T\gamma}\frac{\rho^{k}}{\theta_k^2}\right)\\
&~~\overset{b}\leq \frac{C_3L}{2m\rho^{\gamma}}\frac{3\rho}{(1-\rho)^3}+\frac{C_4L}{2m\rho^{2\gamma}}\left(\frac{3\rho}{(1-\rho)^3}\sum_{s=0}^{\gamma-1}\frac{\theta_s^2}{\rho^s}\Phi^s+\sum_{s=\gamma}^{T\gamma-1}\frac{\theta_s^2}{\rho^s}\Phi^s\frac{3\rho^{s-\gamma+1}}{(1-\rho)^3\theta_{s-\gamma}^2}\right)
\end{aligned}
\end{eqnarray}
\begin{eqnarray}
\hspace*{-1cm}\begin{aligned}
&~~\overset{c}\leq \frac{3C_3L}{2m\rho^{\gamma-1}(1-\rho)^3}+\frac{C_4L}{2m\rho^{2\gamma}}\left(\frac{3\rho}{(1-\rho)^3\rho^{\gamma-1}}\sum_{s=0}^{\gamma-1}\Phi^s+\frac{3\rho^{-\gamma+1}}{(1-\rho)^3}\sum_{s=\gamma}^{T\gamma-1}\Phi^s\right)\\
&~~\leq\frac{3C_3L}{2m\rho^{\gamma-1}(1-\rho)^3}+\frac{3C_4L}{2m\rho^{3\gamma-1}(1-\rho)^3}\sum_{s=0}^{T\gamma-1}\Phi^s\\
&~~\overset{d}\leq \frac{235\gamma^3C_3L}{m(1-\sigma_{\gamma})^3}+\frac{10\gamma^2}{mL(1+\frac{1}{\tau})(1-\sigma_{\gamma})^2}\sum_{s=0}^{T\gamma-1}\Phi^s,
\end{aligned}
\end{eqnarray}
where $\overset{b}\leq$ uses (\ref{cont25}), $\overset{c}\leq$ uses $\theta_s\leq 1$ for $s\leq\gamma-1$ and $\theta_s\leq\theta_{s-\gamma}$ for $s\geq\gamma$, $\overset{d}\leq$ uses (\ref{cont26}) and the definition of $C_4$ given in Lemma \ref{lemma3}. Replacing $\|\Pi\y^k\|$ by $\|\Pi\x^k\|$ in the above analysis, we have the same bound for $\|\Pi\x^k\|^2$.
\end{proof}

The next lemma is an analogy counterpart of Lemma \ref{lemma7}, and the proof is similar to that of the above Lemma \ref{lemma9}.
\begin{lemma}\label{lemma10}
Suppose that Assumptions \ref{assumption_f} and \ref{assumption_w_tv} hold with $\mu>0$. Let $\alpha\leq\frac{(1-\sigma_{\gamma})^3}{3385L\gamma^3\sqrt{1+\frac{1}{\tau}}}$ and $\theta_k\equiv\theta=\frac{\sqrt{\mu\alpha}}{2}$. Then for algorithm (\ref{alg-tv-s1})-(\ref{alg-tv-s4}), we have
\begin{eqnarray}
\begin{aligned}\label{cont31}
&\max\left\{\sum_{k=0}^{T\gamma}\frac{L}{2m(1-\theta)^{k+1}}\|\Pi\y^k\|^2,\sum_{k=0}^{T\gamma}\frac{L}{2m(1-\theta)^{k+1}}\|\Pi\x^k\|^2\right\}\\
&~~\leq \frac{3.3C_3L\gamma}{m(1-\theta)(1-\sigma_{\gamma})}+\frac{\theta^2}{7mL(1+\frac{1}{\tau})}\sum_{s=0}^{T\gamma-1}\frac{\Phi^s}{(1-\theta)^{s+1}},
\end{aligned}
\end{eqnarray}
where $\tau$ and $\Phi^r$ are defined in Lemma \ref{lemma2}, and $C_3$ is defined in Lemma \ref{lemma3}.
\end{lemma}
\begin{proof}
From the definition of $\M_{\y}^{t\gamma,\gamma}$ and (\ref{cont24}), we have
\begin{eqnarray}
\begin{aligned}\notag
&\sum_{k=1}^{T\gamma}\frac{L}{2m(1-\theta)^{k+1}}\|\Pi\y^k\|^2\\
&~~=\sum_{t=0}^{T-1}\sum_{r=1}^{\gamma}\frac{L}{2m(1-\theta)^{t\gamma+r+1}}\|\Pi\y^{t\gamma+r}\|^2\leq\sum_{t=0}^{T-1}\sum_{r=1}^{\gamma}\frac{L}{2m(1-\theta)^{t\gamma+r+1}}\M_{\y}^{(t+1)\gamma,\gamma}\\
&~~\leq \sum_{t=0}^{T-1}\sum_{r=1}^{\gamma}\frac{C_3L\rho^{t\gamma}}{2m(1-\theta)^{t\gamma+r+1}}+\sum_{t=0}^{T-1}\sum_{r=1}^{\gamma}\frac{C_4L}{2m(1-\theta)^{t\gamma+r+1}} \sum_{s=0}^{(t+1)\gamma-1}\rho^{(t-1)\gamma-s}\theta^2\Phi^s\\
&~~\leq \frac{C_3L}{2m(1-\theta)}\sum_{t=0}^{T-1}\sum_{r=1}^{\gamma}\frac{\rho^{t\gamma+r}}{\rho^{\gamma}(1-\theta)^{t\gamma+r}}+\frac{C_4L\theta^2}{2m(1-\theta)}\sum_{t=0}^{T-1}\sum_{r=1}^{\gamma}\frac{\rho^{t\gamma+r}}{\rho^{2\gamma}(1-\theta)^{t\gamma+r}} \sum_{s=0}^{(t+1)\gamma-1}\frac{\Phi^s}{\rho^s}\\
&~~= \frac{C_3L}{2m\rho^{\gamma}(1-\theta)}\sum_{k=1}^{T\gamma}\left(\frac{\rho}{1-\theta}\right)^k+\frac{C_4L\theta^2}{2m\rho^{2\gamma}(1-\theta)}\sum_{k=1}^{T\gamma}\left(\frac{\rho}{1-\theta}\right)^k \sum_{s=0}^{\lceil\frac{k}{\gamma}\rceil\gamma-1}\frac{\Phi^s}{\rho^s}.
\end{aligned}
\end{eqnarray}
Similar to (\ref{cont36}), we have
\begin{eqnarray}
\begin{aligned}\notag
\sum_{k=1}^{T\gamma}\left(\frac{\rho}{1-\theta}\right)^k \sum_{s=0}^{\lceil\frac{k}{\gamma}\rceil\gamma-1}\frac{\Phi^s}{\rho^s}\leq \sum_{s=0}^{\gamma-1}\frac{\Phi^s}{\rho^s}\sum_{k=1}^{T\gamma}\left(\frac{\rho}{1-\theta}\right)^k+\sum_{s=\gamma}^{T\gamma-1}\frac{\Phi^s}{\rho^s}\sum_{k=s-\gamma+1}^{T\gamma}\left(\frac{\rho}{1-\theta}\right)^k.
\end{aligned}
\end{eqnarray}
From the settings of $\theta$ and $\alpha$, we know $\theta\leq\frac{1-\sigma_{\gamma}}{116\gamma}$. From (\ref{cont26}), we further have $\frac{\rho}{1-\theta}< 1$ and $1-\rho-\theta\geq \frac{0.19(1-\sigma_{\gamma})}{\gamma}$. So we have $\sum_{k=r+1}^K\left(\frac{\rho}{1-\theta}\right)^k\leq \left(\frac{\rho}{1-\theta}\right)^r\frac{\rho}{1-\theta-\rho}$. It follows from $\|\Pi\y^0\|=0$ that
\begin{eqnarray}
\begin{aligned}\notag
&\sum_{k=0}^{T\gamma}\frac{L}{2m(1-\theta)^{k+1}}\|\Pi\y^k\|^2\leq \frac{C_3L}{2m\rho^{\gamma-1}(1-\theta)(1-\theta-\rho)}\\
&~~\quad+\frac{C_4L\theta^2}{2m\rho^{2\gamma-1}(1-\theta)(1-\theta-\rho)}\left(\sum_{s=0}^{\gamma-1}\frac{\Phi^s}{(1-\theta)^s}\left(\frac{1-\theta}{\rho}\right)^s+\left(\frac{\rho}{1-\theta}\right)^{-\gamma}\sum_{s=\gamma}^{T\gamma-1}\frac{\Phi^s}{(1-\theta)^s}\right)\\
&~~\overset{a}\leq \frac{C_3L}{2m\rho^{\gamma-1}(1-\theta)(1-\theta-\rho)}+\frac{C_4L\theta^2(1-\theta)^{\gamma}}{2m\rho^{3\gamma-1}(1-\theta-\rho)}\sum_{s=0}^{T\gamma-1}\frac{\Phi^s}{(1-\theta)^{s+1}}\\
&~~\overset{b}\leq \frac{3.3C_3L\gamma}{m(1-\theta)(1-\sigma_{\gamma})}+\frac{\theta^2}{7mL(1+\frac{1}{\tau})}\sum_{s=0}^{T\gamma-1}\frac{\Phi^s}{(1-\theta)^{s+1}}.
\end{aligned}
\end{eqnarray}
where $\overset{a}\leq$ uses $\frac{1-\theta}{\rho}>1$ such that $\left(\frac{1-\theta}{\rho}\right)^s\leq \left(\frac{1-\theta}{\rho}\right)^{\gamma}$ for all $s\leq\gamma-1$, $\overset{b}\leq$ uses (\ref{cont26}), $1-\rho-\theta\geq \frac{0.19(1-\sigma_{\gamma})}{\gamma}$, and the definition of $C_4$ given in Lemma \ref{lemma3}. Replacing $\|\Pi\y^k\|$ by $\|\Pi\x^k\|$ in the above analysis, we have the same bound for $\|\Pi\x^k\|^2$.
\end{proof}

Now, we are ready to prove Theorems \ref{theorem3} and \ref{theorem4}. We first prove Theorem \ref{theorem3}.
\begin{proof}
Plugging (\ref{cont30}) into (\ref{cont3}) and using the definition of $\Phi^r$ in (\ref{cont18}), we have
\begin{eqnarray}
\begin{aligned}\notag
&\frac{F(\overline x^{T\gamma+1})-F(x^*)}{\theta_{T\gamma}^2}+\frac{1}{2\alpha}\|\overline z^{T\gamma+1}-x^*\|^2\\
&~~\leq \frac{1}{2\alpha}\|\overline z^0-x^*\|^2+\frac{235\gamma^3C_3L}{m(1-\sigma_{\gamma})^3}-\sum_{k=0}^{T\gamma}\left(\left(\frac{1}{2\alpha}-\frac{L}{2}-\frac{20L\gamma^2}{(1-\sigma_{\gamma})^2}\right)\|\overline z^{t+1}-\overline z^t\|^2\right.\\
&~~\hspace*{6.9cm}\left.+\frac{1}{\theta_{k-1}^2}\left(1-\frac{20(1+\tau)\gamma^2}{(1+\frac{1}{\tau})(1-\sigma_{\gamma})^2}\right)D_f(\overline x^k,\y^k)\right)\\
&~~\overset{a}\leq \frac{1}{2\alpha}\|\overline z^0-x^*\|^2+\frac{235\gamma^3C_3L}{m(1-\sigma_{\gamma})^3}-\sum_{k=0}^{T\gamma}\left(\frac{1}{4\alpha}\|\overline z^{t+1}-\overline z^t\|^2+\frac{1}{2\theta_{k-1}^2}D_f(\overline x^k,\y^k)\right)\\
&~~\leq \frac{1}{2\alpha}\|\overline z^0-x^*\|^2+\frac{235\gamma^3C_3L}{m(1-\sigma_{\gamma})^3}-\frac{1}{5mL}\sum_{r=0}^{T\gamma-1}\Phi^r,
\end{aligned}
\end{eqnarray}
where in $\overset{a}\leq$ we let $\tau=\frac{(1-\sigma_{\gamma})^2}{40\gamma^2}$ so to have $\frac{20(1+\tau)\gamma^2}{(1+\frac{1}{\tau})(1-\sigma_{\gamma})^2}=\frac{1}{2}$,  $\alpha=\frac{(1-\sigma_{\gamma})^4}{21675L\gamma^4}\leq\frac{(1-\sigma_{\gamma})^3}{3385L\gamma^3\sqrt{1+\frac{1}{\tau}}}$, and $\frac{1}{4\alpha}\geq \frac{L}{2}+\frac{20L\gamma^2}{(1-\sigma_{\gamma})^2}$. So we have
\begin{eqnarray}
&&F(\overline x^{T\gamma+1})-F(x^*)\leq \theta_{T\gamma}^2\left(\frac{1}{2\alpha}\|\overline z^0-x^*\|^2+\frac{235\gamma^3C_3L}{m(1-\sigma_{\gamma})^3}\right),\label{cont32}\\
&&\frac{1}{5mL}\sum_{r=0}^{T\gamma-1}\Phi^r\leq \frac{1}{2\alpha}\|\overline z^0-x^*\|^2+\frac{235\gamma^3C_3L}{m(1-\sigma_{\gamma})^3}.\label{cont33}
\end{eqnarray}
It follows from (\ref{cont30}) that
\begin{eqnarray}
\begin{aligned}\label{cont34}
&\max\left\{\sum_{k=0}^{T\gamma}\frac{L}{2m\theta_k^2}\|\Pi\y^k\|^2,\sum_{k=0}^{T\gamma}\frac{L}{2m\theta_k^2}\|\Pi\x^k\|^2\right\}\\
&~~\leq\frac{235\gamma^3C_3L}{m(1-\sigma_{\gamma})^3}+\frac{10\gamma^2}{mL(1+\frac{1}{\tau})(1-\sigma_{\gamma})^2}\sum_{s=0}^{T\gamma-1}\Phi^s\\
&~~\leq\frac{235\gamma^3C_3L}{m(1-\sigma_{\gamma})^3}+\frac{1}{4mL}\sum_{s=0}^{T\gamma-1}\Phi^s\\
&~~\leq\frac{9}{4}\left(\frac{1}{2\alpha}\|\overline z^0-x^*\|^2+\frac{235\gamma^3C_3L}{m(1-\sigma_{\gamma})^3}\right).
\end{aligned}
\end{eqnarray}
From the definition of $C_3$ given in Lemma \ref{lemma3}, we have
\begin{eqnarray}
\begin{aligned}\label{cont41}
&\frac{1}{2\alpha}\|\overline z^0-x^*\|^2+\frac{235\gamma^3C_3L}{m(1-\sigma_{\gamma})^3}\\
&~~\leq\frac{1}{2\alpha}\|\overline z^0-x^*\|^2+\frac{1-\sigma_{\gamma}}{27mL\gamma}\M_{\s}^{\gamma,\gamma}+\frac{103870L\gamma^5}{m(1-\sigma_{\gamma})^5}\M_{\z}^{\gamma,\gamma}+\frac{470L\gamma^3}{m(1-\sigma_{\gamma})^3}\M_{\x}^{\gamma,\gamma}\equiv C_5.
\end{aligned}
\end{eqnarray}
The conclusion follows from Lemma \ref{lemma5}.
\end{proof}

The next lemma gives a sharper bound of the constant $C_5$ appeared in the above proof.
\begin{lemma}\label{lemma5}
Under the settings of Theorem \ref{theorem3}, we can further bound $C_5$ by
\begin{eqnarray}
C_5\leq\frac{1}{2\alpha}\|\overline z^0-x^*\|^2+\frac{1-\sigma_{\gamma}}{20mL\gamma}\max_{r=0,...,\gamma}\|\Pi\s^r\|^2.\label{cont42}
\end{eqnarray}
\end{lemma}
\begin{proof}
From step (\ref{alg-tv-s3}) with $\mu=0$, we have for any $k\leq \gamma-1$,
\begin{eqnarray}
\begin{aligned}\notag
\theta_{k+1}\|\Pi\z^{k+1}\|\leq& \theta_k\|\Pi\z^{k+1}\|\leq \theta_k\|\Pi\z^k\|+\alpha\|\Pi\s^k\|\\
\leq&\theta_0\|\Pi\z^0\|+\alpha\sum_{t=0}^k\|\Pi\s^t\|\leq\alpha\sum_{t=0}^{\gamma-1}\|\Pi\s^t\|
\end{aligned}
\end{eqnarray}
where we use $\Pi\z^0=0$. Squaring both sides gives
\begin{eqnarray}
\theta_{k+1}^2\|\Pi\z^{k+1}\|^2\leq \alpha^2\gamma\sum_{t=0}^{\gamma-1}\|\Pi\s^t\|^2\leq \alpha^2\gamma^2\max_{r=0,...,\gamma}\|\Pi\s^r\|^2.\notag
\end{eqnarray}
From the setting of $\alpha$ and the definition of $\M_{\z}^{\gamma,\gamma}$, we have
\begin{eqnarray}
\frac{103870L\gamma^5}{m(1-\sigma_{\gamma})^5}\M_{\z}^{\gamma,\gamma}\leq \frac{1-\sigma_{\gamma}}{4523mL\gamma}\max_{r=0,...,\gamma}\|\Pi\s^r\|^2.\notag
\end{eqnarray}
On the other hand, it follows from step (\ref{alg-tv-s4}) that
\begin{eqnarray}
\|\Pi\x^{k+1}\|\leq\theta_k\|\Pi\z^{k+1}\|+\|\Pi\x^k\|\leq \sum_{t=0}^k\theta_t\|\Pi\z^{t+1}\|\leq 1.62\sum_{t=0}^{\gamma-1}\theta_{t+1}\|\Pi\z^{t+1}\|.\notag
\end{eqnarray}
Squaring both sides gives
\begin{eqnarray}
\|\Pi\x^{k+1}\|^2\leq 2.63\gamma\sum_{t=0}^{\gamma-1}\theta_{t+1}^2\|\Pi\z^{t+1}\|^2\leq 2.63\gamma^4\alpha^2\max_{r=0,...,\gamma}\|\Pi\s^r\|^2.\notag
\end{eqnarray}
From the setting of $\alpha$ and the definition of $\M_{\x}^{\gamma,\gamma}$, we have
\begin{eqnarray}
\frac{470L\gamma^3}{m(1-\sigma_{\gamma})^3}\M_{\x}^{\gamma,\gamma}\leq \frac{1-\sigma_{\gamma}}{380070mL\gamma}\max_{r=0,...,\gamma}\|\Pi\s^r\|^2.\notag
\end{eqnarray}
So we have the conclusion.
\end{proof}

In the next lemma, we measure the convergence rate at $x_{(i)}^{t\gamma+1}$ for any $i=1,...,m$.
\begin{lemma}\label{lemma8}
Under the settings of Theorem \ref{theorem3}, we have for any $t\leq T-1$,
\begin{eqnarray}
\begin{aligned}\notag
&F(x_{(i)}^{t\gamma+1})-F(x^*)\\
&~~\leq \frac{1}{(t\gamma+1)^2}\max\left\{\frac{\sqrt{m}(1-\sigma_{\gamma})}{L\alpha\gamma},8m\right\}\left(\frac{2}{\alpha}\|\overline z^0-x^*\|^2+\frac{1-\sigma_{\gamma}}{5mL\gamma}\max_{r=0,...,\gamma}\|\Pi\s^r\|^2\right).
\end{aligned}
\end{eqnarray}
\end{lemma}
\begin{proof}
We first bound $F(x_{(i)}^k)-F(\overline x^k)$ for any $i$. From Lemma \ref{lemma_inexact}, we have
\begin{eqnarray}
\begin{aligned}\notag
F(x_{(i)}^k)\leq& f(\overline y^{k-1},\y^{k-1})+\<\overline s^{k-1},x_{(i)}^k-\overline y^{k-1}\>+\frac{L}{2}\|x_{(i)}^k-\overline y^{k-1}\|^2+\frac{L}{2m}\|\Pi\y^{k-1}\|^2\\
\leq& F(\overline x^k)+\<\overline s^{k-1},x_{(i)}^k-\overline x^k\>+L\|x_{(i)}^k-\overline x^k\|^2+L\|\overline x^k-\overline y^{k-1}\|^2+\frac{L}{2m}\|\Pi\y^{k-1}\|^2\\
\overset{a}\leq& F(\overline x^k)\hspace*{-0.03cm}+\hspace*{-0.03cm}\frac{\theta_{k-1}}{\alpha}\|\overline z^k\hspace*{-0.03cm}-\hspace*{-0.03cm}\overline z^{k-1}\|\|\Pi\x^k\|\hspace*{-0.03cm}+\hspace*{-0.03cm}L\|\Pi\x^k\|^2\hspace*{-0.03cm}+\hspace*{-0.03cm}L\theta_{k-1}^2\|\overline z^k\hspace*{-0.03cm}-\hspace*{-0.03cm}\overline z^{k-1}\|^2\hspace*{-0.03cm}+\hspace*{-0.03cm}\frac{L}{2m}\|\Pi\y^{k-1}\|^2,
\end{aligned}
\end{eqnarray}
where we use (\ref{alg-ag-s3}) with $\mu=0$, (\ref{alg-ag-s1}), and (\ref{alg-ag-s4}) in $\overset{a}\leq$. From the definition of $\Phi^r$ in (\ref{cont18}), it follows from (\ref{cont33}) that for any $k\leq T\gamma$,
\begin{equation}
\|\overline z^k-\overline z^{k-1}\|^2\leq\frac{\Phi^{k-1}}{2mL^2(1+\frac{1}{\tau})}\overset{b}\leq \frac{5(1-\sigma_{\gamma})^2}{80L\gamma^2}\left(\frac{1}{2\alpha}\|\overline z^0-x^*\|^2+\frac{235\gamma^3C_3L}{m(1-\sigma_{\gamma})^3}\right),\notag
\end{equation}
where $\overset{b}\leq$ uses the setting of $\tau=\frac{(1-\sigma_{\gamma})^2}{40\gamma^2}$ given in the proof of Theorem \ref{theorem3}. From (\ref{cont32}) and (\ref{cont34}), we have for any $t\gamma+1$ with $t\leq T-1$
\begin{eqnarray}
\begin{aligned}\notag
F(x_{(i)}^{t\gamma+1})-F(x^*)\leq \theta_{t\gamma}^2\max\left\{\frac{\sqrt{m}(1-\sigma_{\gamma})}{L\alpha\gamma},8m\right\}\left(\frac{1}{2\alpha}\|\overline z^0-x^*\|^2+\frac{235\gamma^3C_3L}{m(1-\sigma_{\gamma})^3}\right).
\end{aligned}
\end{eqnarray}
From (\ref{cont41}), (\ref{cont42}), and (\ref{theta_pro}), we have the conclusion.
\end{proof}

Next, we prove Theorem \ref{theorem4}.
\begin{proof}
Plugging (\ref{cont31}) into (\ref{cont4}) and using the definition of $\Phi^r$ in (\ref{cont18}), we have
\begin{eqnarray}
\begin{aligned}\notag
&\frac{1}{(1-\theta)^{T\gamma+1}}\left(F(\overline x^{T\gamma+1})-F(x^*)+\left(\frac{\theta^2}{2\alpha}+\frac{\mu\theta}{2}\right)\|\overline z^{T\gamma+1}-x^*\|^2\right)\\
&~~\leq F(\overline x^0)-F(x^*)+\left(\frac{\theta^2}{2\alpha}+\frac{\mu\theta}{2}\right)\|\overline z^0-x^*\|^2+\frac{3.3C_3L\gamma}{m(1-\theta)(1-\sigma_{\gamma})}\\
&~~\quad-\sum_{k=0}^{T\gamma}\left(\frac{1}{(1-\theta)^k}\left(1-\frac{2(1+\tau)}{7(1+\frac{1}{\tau})}\right)D_f(\overline x^k,\y^k)+\frac{1}{(1-\theta)^{k+1}}\left(\frac{\theta^2}{2\alpha}-\frac{L\theta^2}{2}-\frac{2L\theta^2}{7}\right)\|\overline z^{k+1}-\overline z^k\|^2\right)\\
&~~\overset{a}\leq F(\overline x^0)-F(x^*)+\left(\frac{\theta^2}{2\alpha}+\frac{\mu\theta}{2}\right)\|\overline z^0-x^*\|^2+\frac{3.3C_3L\gamma}{m(1-\theta)(1-\sigma_{\gamma})}\\
&~~\quad-\sum_{k=0}^{T\gamma}\left(\frac{1}{2(1-\theta)^k}D_f(\overline x^k,\y^k)+\frac{\theta^2}{4\alpha(1-\theta)^{k+1}}\|\overline z^{k+1}-\overline z^k\|^2\right)\\
&~~\leq F(\overline x^0)-F(x^*)\hspace*{-0.02cm}+\hspace*{-0.02cm}\left(\frac{\theta^2}{2\alpha}\hspace*{-0.02cm}+\hspace*{-0.02cm}\frac{\mu\theta}{2}\right)\|\overline z^0\hspace*{-0.02cm}-\hspace*{-0.02cm}x^*\|^2\hspace*{-0.02cm}+\hspace*{-0.02cm}\frac{3.3C_3L\gamma}{m(1-\theta)(1-\sigma_{\gamma})}\hspace*{-0.02cm}-\hspace*{-0.02cm}\frac{\theta^2}{11mL}\sum_{r=0}^{T\gamma-1}\frac{\Phi^r}{(1-\theta)^{r+1}},
\end{aligned}
\end{eqnarray}
where in $\overset{a}\leq$ we let $\tau=\frac{7}{4}$ so to have $\frac{2(1+\tau)}{7(1+\frac{1}{\tau})}=\frac{1}{2}$, $\alpha=\frac{(1-\sigma_{\gamma})^3}{4244L\gamma^3}\leq\frac{(1-\sigma_{\gamma})^3}{3385L\gamma^3\sqrt{1+\frac{1}{\tau}}}$, and $\frac{1}{4\alpha}\geq\frac{L}{2}+\frac{2L}{7}$. Thus, we have the first conclusion and
\begin{eqnarray}
\frac{\theta^2}{11mL}\sum_{r=0}^{T\gamma-1}\frac{\Phi^r}{(1-\theta)^{r+1}}\leq F(\overline x^0)-F(x^*)+\left(\frac{\theta^2}{2\alpha}+\frac{\mu\theta}{2}\right)\|\overline z^0-x^*\|^2+\frac{3.3C_3L\gamma}{m(1-\theta)(1-\sigma_{\gamma})}.\notag
\end{eqnarray}
It follows from (\ref{cont31}) that
\begin{eqnarray}
\begin{aligned}\notag
&\sum_{k=0}^{T\gamma}\frac{L}{2m(1-\theta)^{k+1}}\|\Pi\x^k\|^2\\
&~~\leq\frac{3.3C_3L\gamma}{m(1-\theta)(1-\sigma_{\gamma})}+\frac{\theta^2}{7mL(1+\frac{1}{\tau})}\sum_{s=0}^{T\gamma-1}\frac{\Phi^s}{(1-\theta)^{s+1}}\\
&~~\leq 2\left(F(\overline x^0)-F(x^*)+\left(\frac{\theta^2}{2\alpha}+\frac{\mu\theta}{2}\right)\|\overline z^0-x^*\|^2+\frac{3.3C_3L\gamma}{m(1-\theta)(1-\sigma_{\gamma})}\right).
\end{aligned}
\end{eqnarray}
Thus, we have the second conclusion by plugging the definition of $C_3$ in Lemma \ref{lemma3}.
\end{proof}

\begin{remark}\label{complexity-remark}
We rewrite the convergence rates in Theorems \ref{theorem3} and \ref{theorem4} in the form of complexities. For the nonstrongly convex case, letting $F(\overline x^{T\gamma+1})-F(x^*)\leq \frac{2C}{\alpha(T\gamma+1)^2}=\epsilon$, we have $T\gamma+1=\sqrt{\frac{2C}{\alpha\epsilon}}=\bO((\frac{\gamma}{1-\sigma_{\gamma}})^2\sqrt{\frac{LC}{\epsilon}})$. Each iteration only requires $\bO(1)$ communication round and gradient oracle call. For the strongly convex case, letting $F(\overline x^{T\gamma+1})-F(x^*)\leq (1-\theta)^{T\gamma+1}C=\epsilon$, we have $T\gamma+1=\bO(\frac{1}{\theta}\log\frac{C}{\epsilon})=\bO((\frac{\gamma}{1-\sigma_{\gamma}})^{1.5}\sqrt{\frac{L}{\mu}}\log\frac{1}{\epsilon})$.
\end{remark}

\section{Numerical Experiments}
In this section, we test the performance of the accelerated gradient tracking (Acc-GT) over time-varying graphs. The performance of Acc-GT over static graphs has already been verified in \citep{qu2017-2}. Moreover, \citet{qu2017-2} reported in their experiment that algorithm (\ref{qu-s1})-(\ref{qu-s4}) with fixed step size (our theoretical setting) performs faster than the one with vanishing step sizes (their theoretical setting). Thus, we omit the comparisons over static graphs.

We consider the following decentralized regularized logistic regression problem:
\begin{eqnarray}\notag
\min_{x\in\R^p} \sum_{i=1}^m f_{(i)}(x),\quad\mbox{where}\quad f_{(i)}(x)=\frac{\mu}{2}\|x\|^2+\frac{1}{n}\sum_{j=1}^n \log\left(1+\exp(-y_{(i),j}\A_{(i),j}^{\top}x)\right),
\end{eqnarray}
where $(\A_{(i),j},y_{(i),j})\in\R^p\times\{1,-1\}$ is the data point with $\A_{(i),j}$ being the feature vector, and $y_{(i),j}$ the label. We use the cifar10 dataset with $p=3072$, $n=50$, and $m=1000$. Each feature vector is normalized to have unit norm, and the data are divided into two classes to fit the logistic regression model. We observe that $L=\max_i\frac{\|\A_{(i)}\|_2^2}{4n}\approx0.215$. We consider both strongly convex ($\mu=10^{-6}$) and nonstrongly convex ($\mu=0$) problems. We test the performance on the 2D grid graphs, where at each iteration, $m$ nodes are uniformly placed in a $\lceil5\sqrt{m}\rceil\times \lceil5\sqrt{m}\rceil$ region in random, and each node is connected with the nodes around it within the distance of $d$. We test on $d=20$ and $d=2$, which correspond to $(\gamma,\sigma_{\gamma})\approx (1,0.9858)$ and $(\gamma,\sigma_{\gamma})\approx (32,0.9471)$, respectively. When $d=20$, the network is connected almost every time. When $d=2$, we observe that at each iteration, almost $61$ percent of the nodes drop out from the communication network in average, which means that they have no connection with the other nodes. We use the Metropolis gossip matrix given in (\ref{weight-matrix}).

For strongly convex problem, we compare Acc-GT and Acc-GT-C (Acc-GT with multiple consensus) with 
DIGing \citep{shi2017}, DAGD-C \citep{Alexander2221}, as well as the classical non-distributed accelerated gradient descent (AGD), where AGD runs on a single machine, and it gives the upper limit of the practical performance of the distributed algorithms.
We do not compare with the time-varying $\mathcal{A}\mathcal{B}$/push-pull method \citep{tvab20} and the push-sum based methods \citep{pushsum16,pushsum15,shi2017} because they are designed for directed graphs. We tune the step sizes $\alpha=\frac{0.1}{L}$ for Acc-GT and Acc-GT-C, $\alpha=\frac{0.5}{L}$ for DIGing, and $\alpha=\frac{1}{L}$ for AGD. For DAGD-C, when $d=2$, we test on the number of inner iterations as $T=\frac{\gamma}{3(1-\sigma_{\gamma})}\approx 201$ and $T=\frac{\gamma}{2(1-\sigma_{\gamma})}\approx 302$, and name the methods DAGD-C1 and DAGD-C2, respectively. When $d=20$, we test on $T=\frac{\gamma}{5(1-\sigma_{\gamma})}\approx 14$ and $T=\frac{\gamma}{4(1-\sigma_{\gamma})}\approx 17$, respectively. For Acc-GT-C, we set the number of inner iterations as $T=\frac{\gamma}{50(1-\sigma_{\gamma})}\approx 12$ and $T=\frac{\gamma}{10(1-\sigma_{\gamma})}\approx 7$ for $d=2$ and $d=20$, respectively. The other parameter settings follow the corresponding theorems of each method. For nonstrongly convex problem, we compare Acc-GT and Acc-GT-C with DIGing \citep{shi2017}, APM \citep{li-2018-pm}, and AGD, and set the same step sizes as above. We tune the step size $\alpha=\frac{1}{L}$ for APM, and set the number of inner iterations as $T_k=\frac{\gamma\log(k+1)}{100(1-\sigma_{\gamma})}$ and $T_k=\frac{\gamma\log(k+1)}{10(1-\sigma_{\gamma})}$ at each outer loop iteration for $d=2$ and $d=20$, respectively. Although the convergence of DIGing was only proved for strongly convex problem in \citep{shi2017}, it also converges for nonstrongly convex ones by using our proof techniques.

Figures \ref{fig1}-\ref{fig4} plot the results, where the objective function error is measured by $F(\overline x^k)-F(x^*)$, and the consensus error is measured by $\sqrt{\frac{\sum_{i=1}^m\|x_{(i)}^k-\overline x^k\|^2}{m\|\overline x^k\|^2}}$. Since $F(x^*)$ is unknown, we approximate it by the output of the classical non-distributed AGD with 50000 iterations for strongly convex problem, and 200000 iterations for nonstrongly convex one. One round of communications means that all the nodes, if they are active, receive information from their neighbors once, and one round of gradient computations means that all the nodes compute their gradient $\nabla f_{(i)}(x)$ once in parallel. Especially, for AGD, one round of gradient computations means computing the full gradient $\sum_{i=1}^m \nabla f_{(i)}(x)$ once. We have the following observations:
\begin{enumerate}
\item Acc-GT converges faster than DIGing, both on the decrease of the objective function errors and consensus errors. This verifies the efficiency of the acceleration technique. Moreover, for strongly convex problem, Acc-CT is only three times slower than the classical non-distributed AGD.
\item Acc-GT-C needs more communication rounds than Acc-GT to reach the same precision of the objective function error, although Acc-GT-C has lower theoretically communication round complexity. Thus, Acc-GT-C is only for the theoretical interest, and it is not suggested in practice.
\item DAGD-C and APM need less gradient computation rounds than Acc-GT to reach the same precision of the objective function error, but they require more communication rounds. This supports that the multiple consensus subroutine places more communication burdens in practice. But on the other hand, DAGD-C and APM have almost the same computation cost as the classical non-distributed AGD. Comparing DAGD-C1 with DAGD-C2, we see that less inner iterations give larger consensus errors, and our settings of the inner iteration numbers are fair to DAGD-C.
\item The network connectivity, that is, the different settings of $d$ in our experiment, has little influence on the decrease of the objective function errors for both DIGing and Acc-GT\footnotemark[3]. We think this is because we set the same step sizes for $d=2$ and $d=20$. From Theorems \ref{theorem3} and \ref{theorem4}, we see that the network connectivity constants impact on the step sizes, and the step sizes impact on the decrease speed of the objective function errors. On the other hand, from the proofs of Theorems \ref{theorem3} and \ref{theorem4}, we see that the decrease speed of the consensus errors given in the two theorems is not tight, and we observe in the experiment that the consensus errors decrease faster when $d=20$ for both DIGing and Acc-GT.
\end{enumerate}

\footnotetext[3]{This phenomenon depends on the data. We also test on the simulated data with $p=100$, $n=50$, and $m=1000$, where each element of the feature vectors is generated randomly in $[0,1]$ from the uniform distribution, we observe that Acc-GT with $d=20$ performs about 1.1 times as fast as that with $d=2$. The difference is not significant.}

\begin{figure}[pt]
\begin{tabular}{@{\extracolsep{0.001em}}c@{\extracolsep{0.001em}}c}
\includegraphics[width=0.5\textwidth,keepaspectratio]{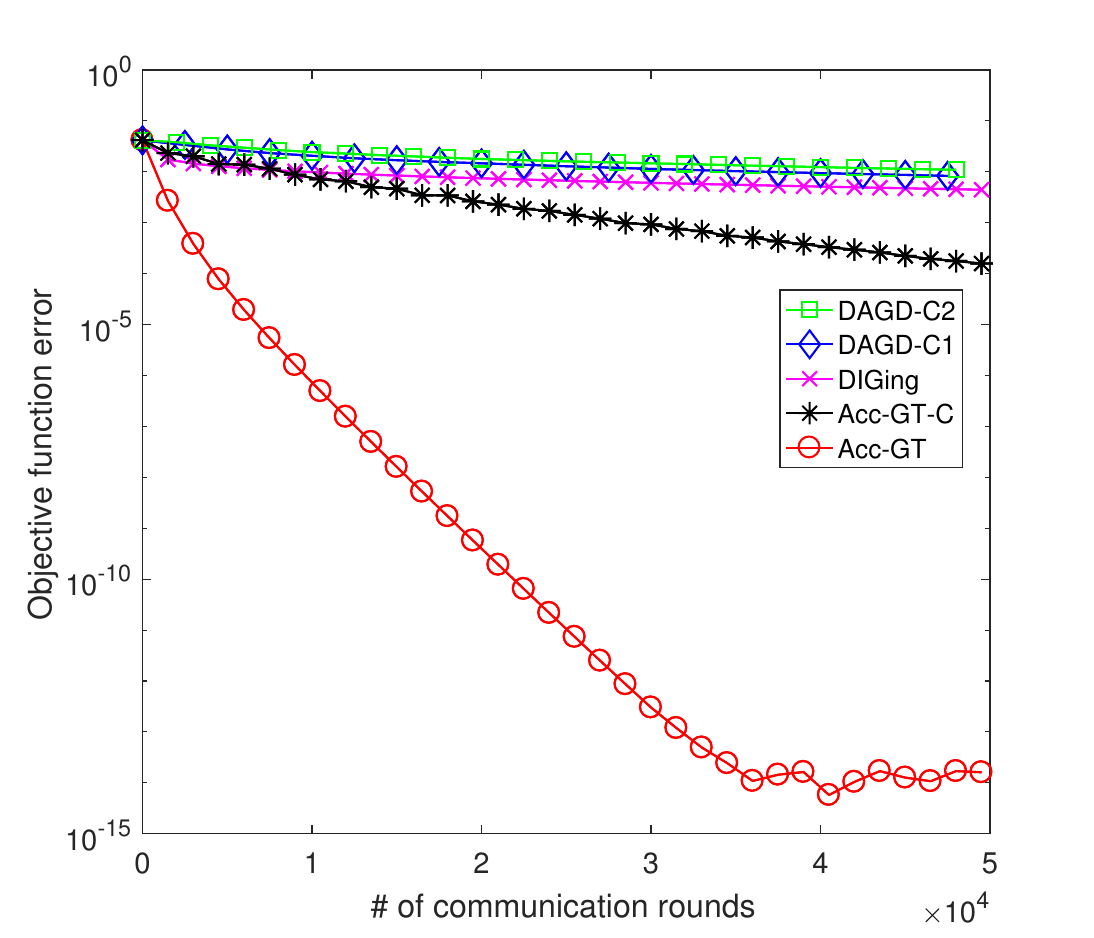}
&\includegraphics[width=0.5\textwidth,keepaspectratio]{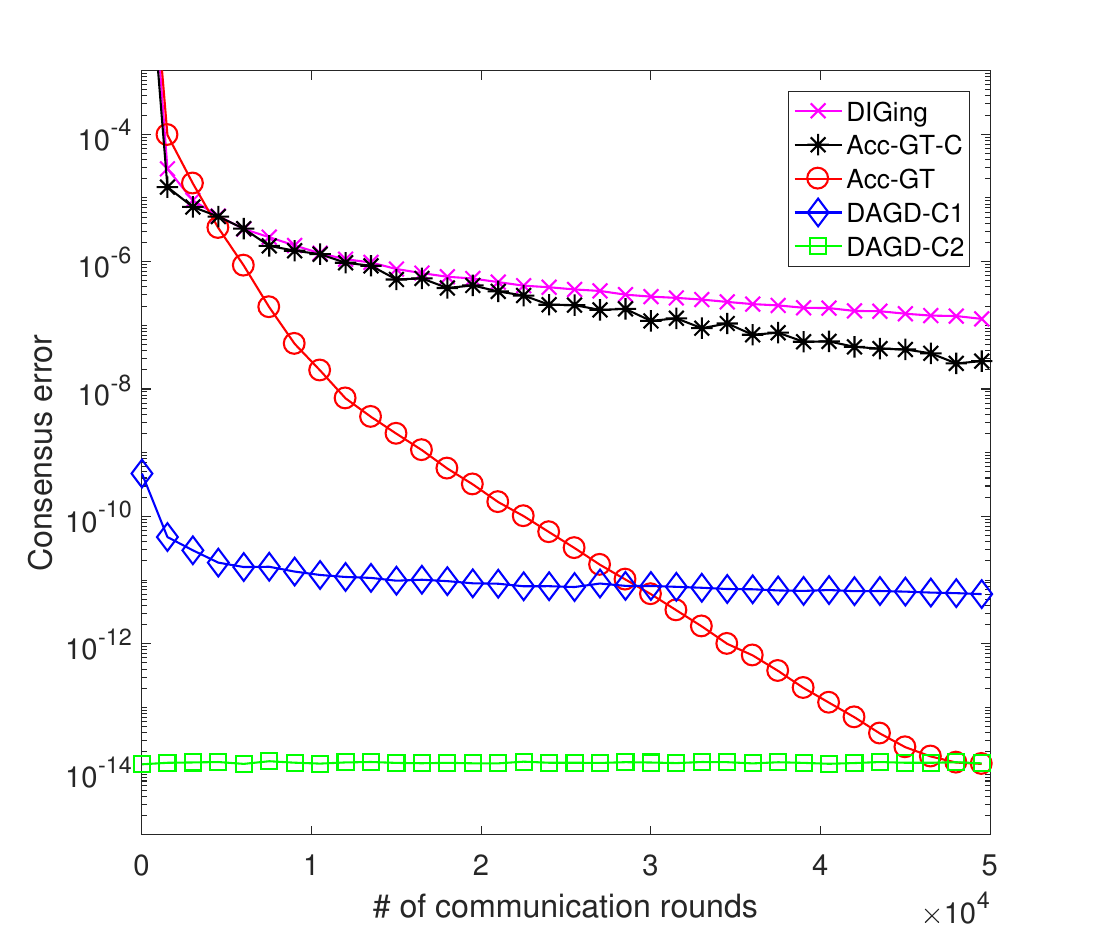}\\
\includegraphics[width=0.5\textwidth,keepaspectratio]{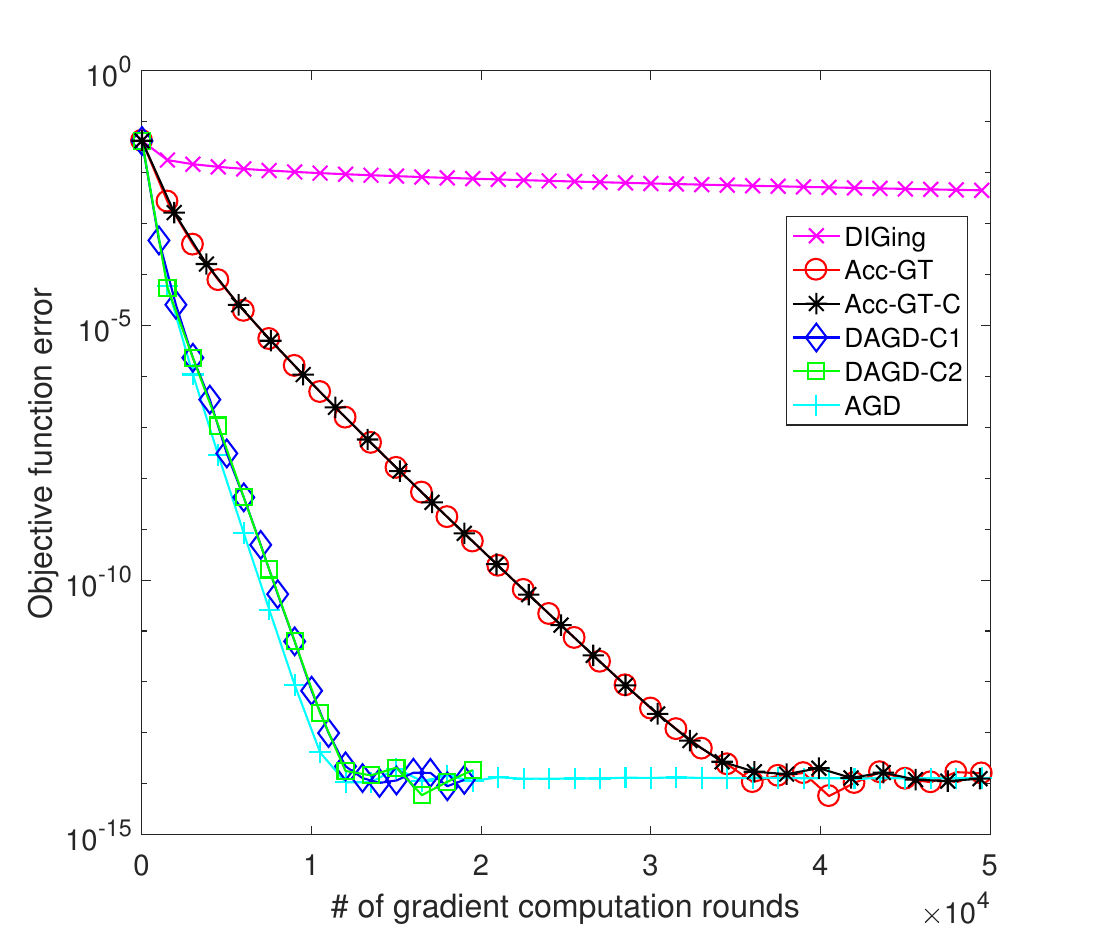}
&\includegraphics[width=0.5\textwidth,keepaspectratio]{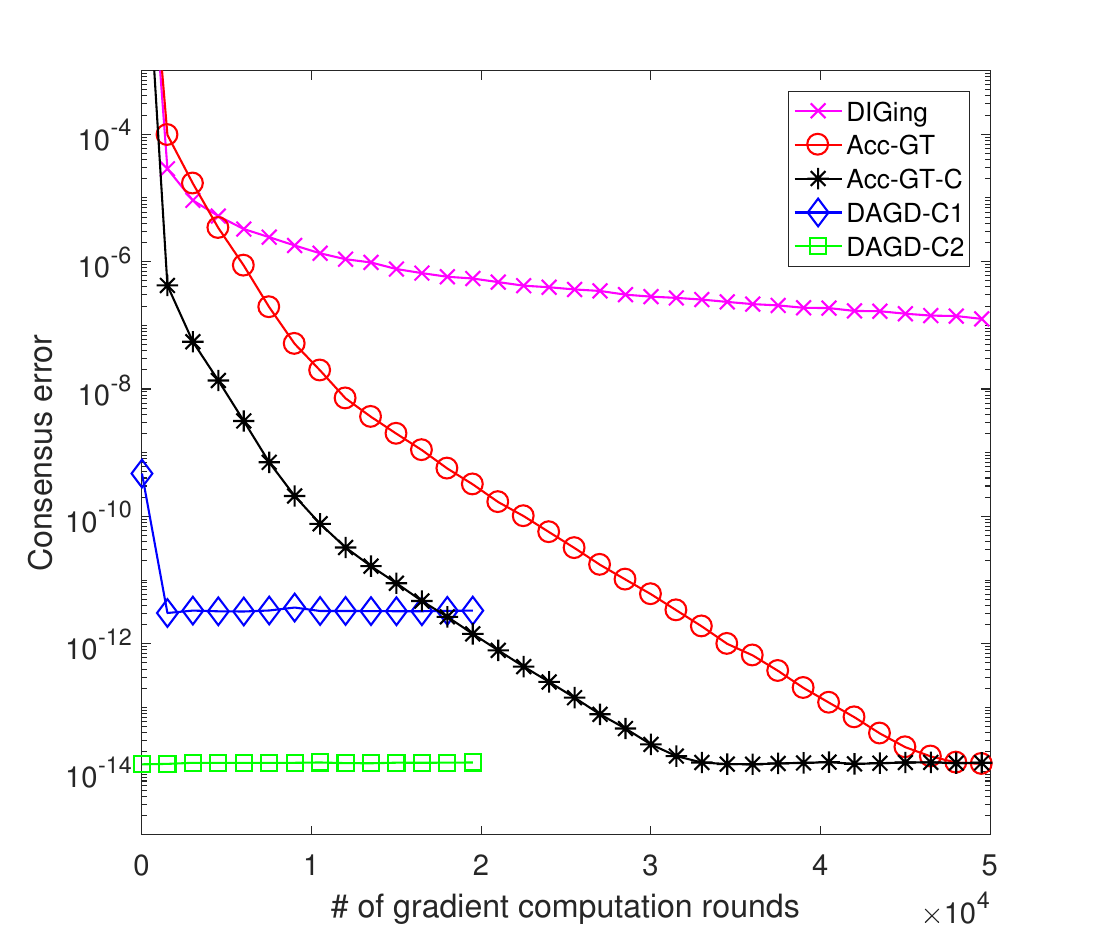}\\
\end{tabular}
\caption{Comparisons of the objective function errors (left) and consensus errors (right) with respect to the number of communication (top) and computation (bottom) rounds for strongly convex problem with $d=2$.}\label{fig1}
\end{figure}

\begin{figure}[pt]
\begin{tabular}{@{\extracolsep{0.001em}}c@{\extracolsep{0.001em}}c}
\includegraphics[width=0.5\textwidth,keepaspectratio]{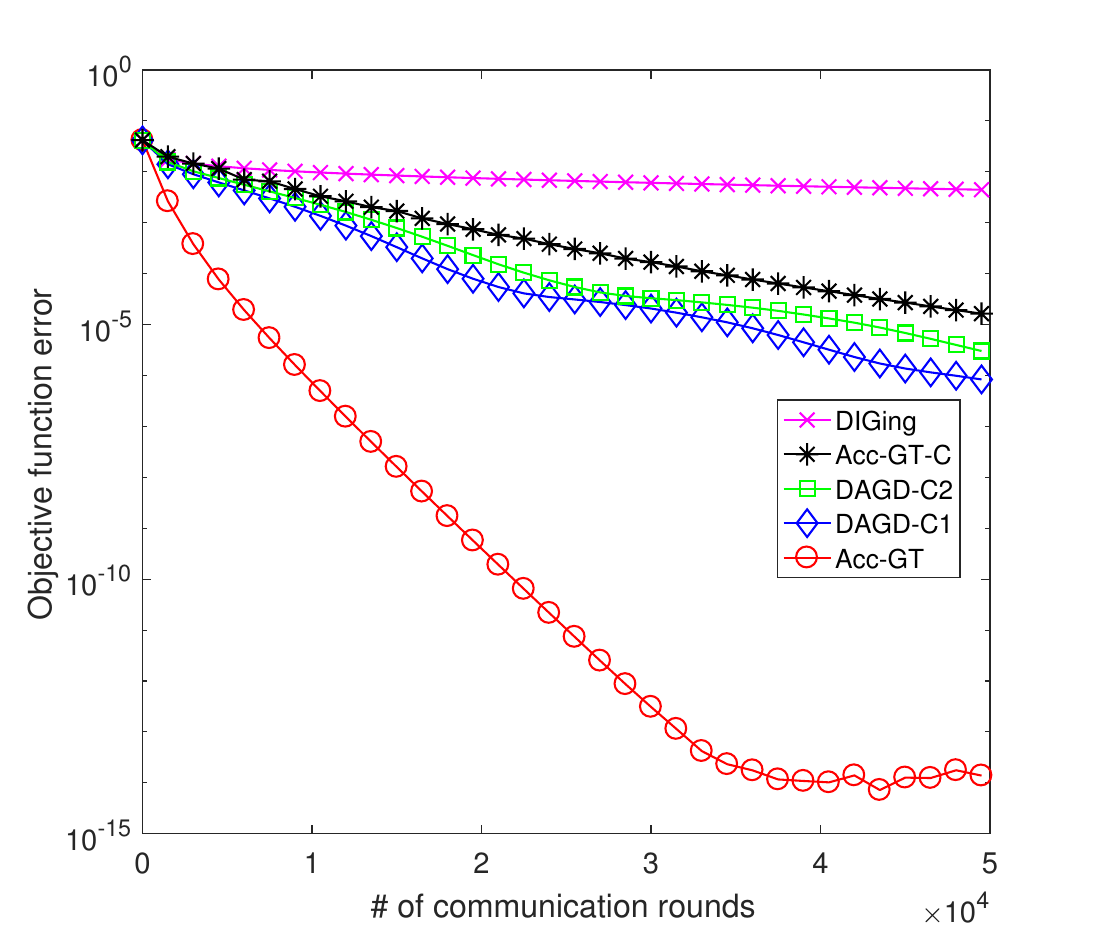}
&\includegraphics[width=0.5\textwidth,keepaspectratio]{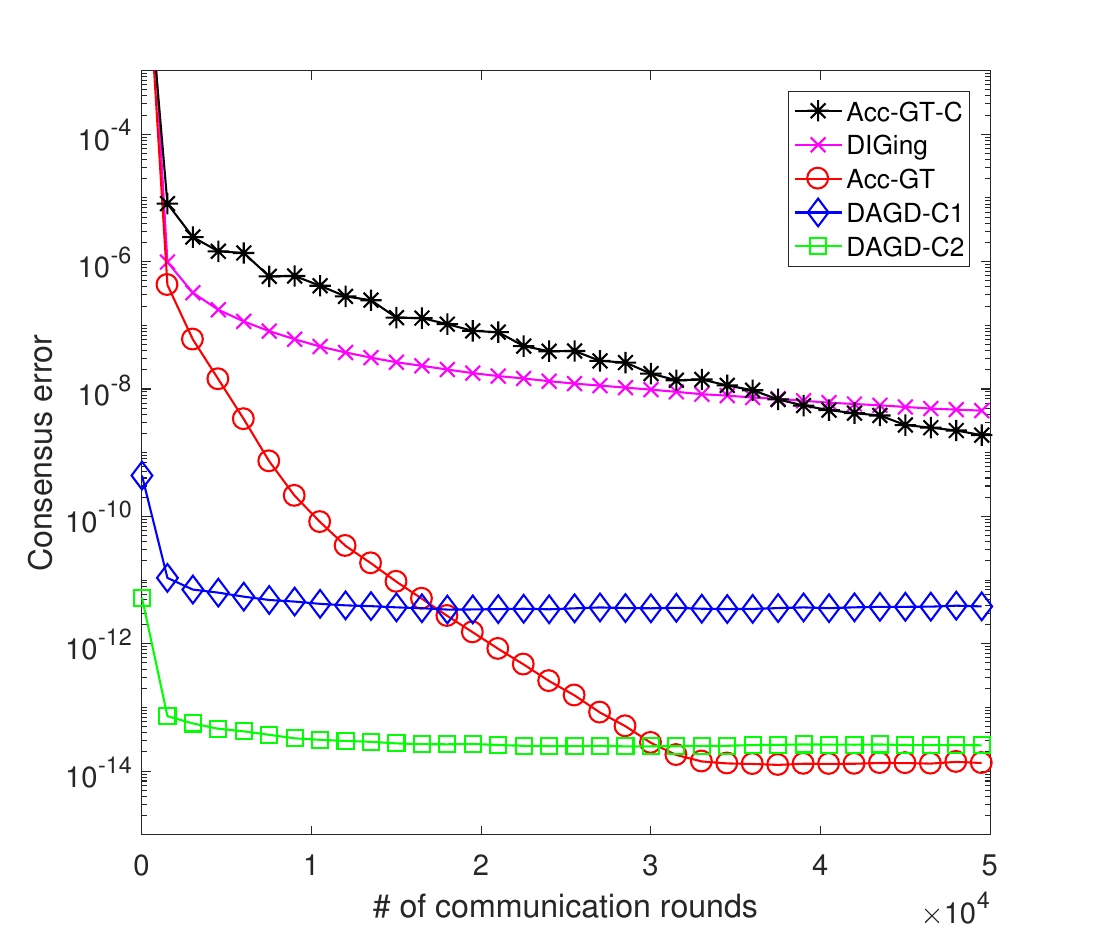}\\
\includegraphics[width=0.5\textwidth,keepaspectratio]{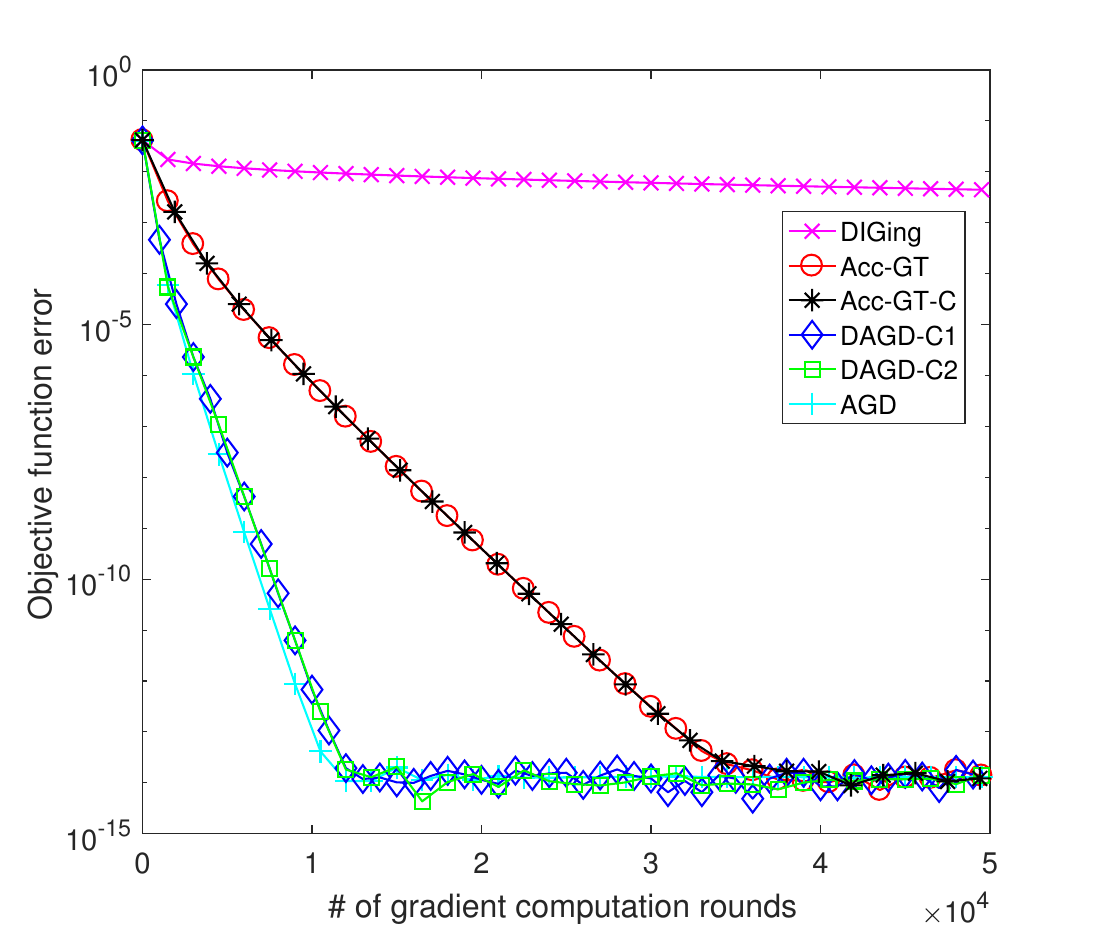}
&\includegraphics[width=0.5\textwidth,keepaspectratio]{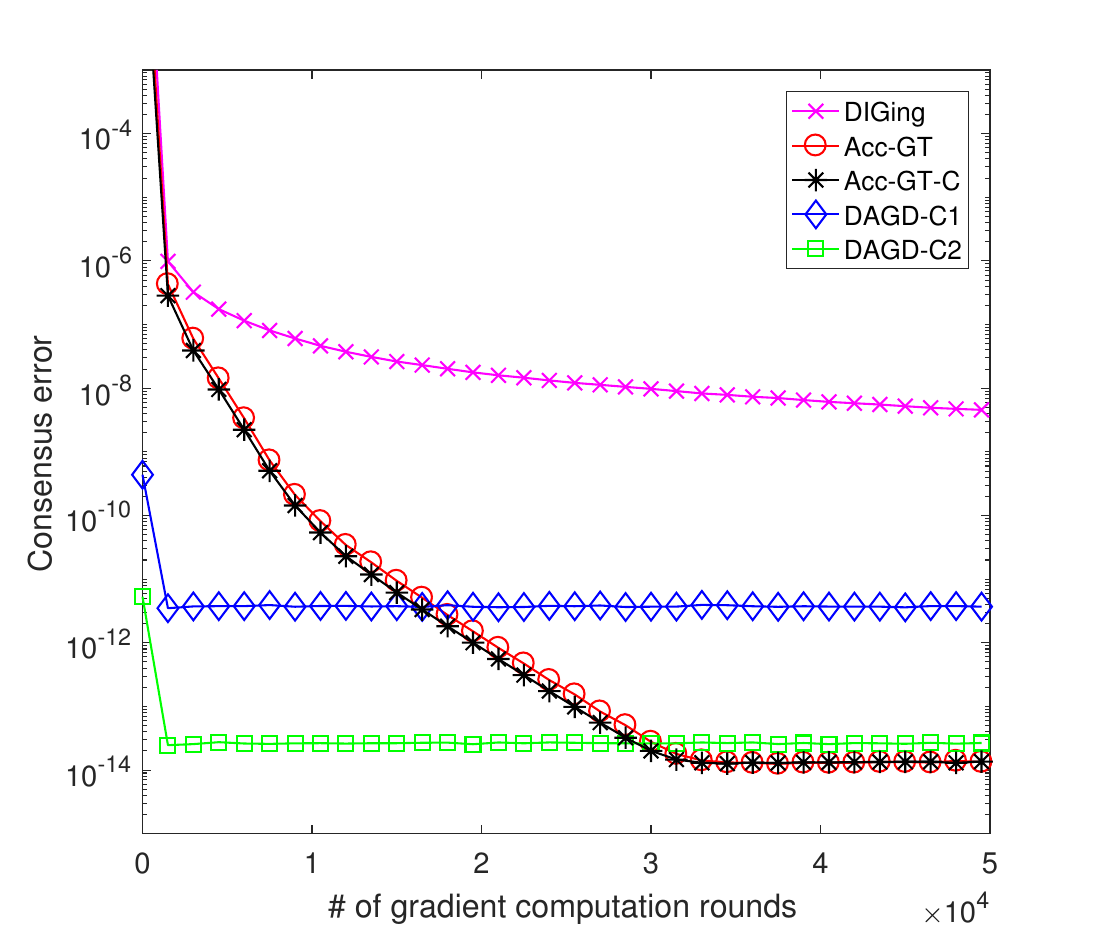}\\
\end{tabular}
\caption{Comparisons of the objective function errors (left) and consensus errors (right) with respect to the number of communication (top) and computation (bottom) rounds for strongly convex problem with $d=20$.}\label{fig2}
\end{figure}

\begin{figure}[pt]
\begin{tabular}{@{\extracolsep{0.001em}}c@{\extracolsep{0.001em}}c}
\includegraphics[width=0.5\textwidth,keepaspectratio]{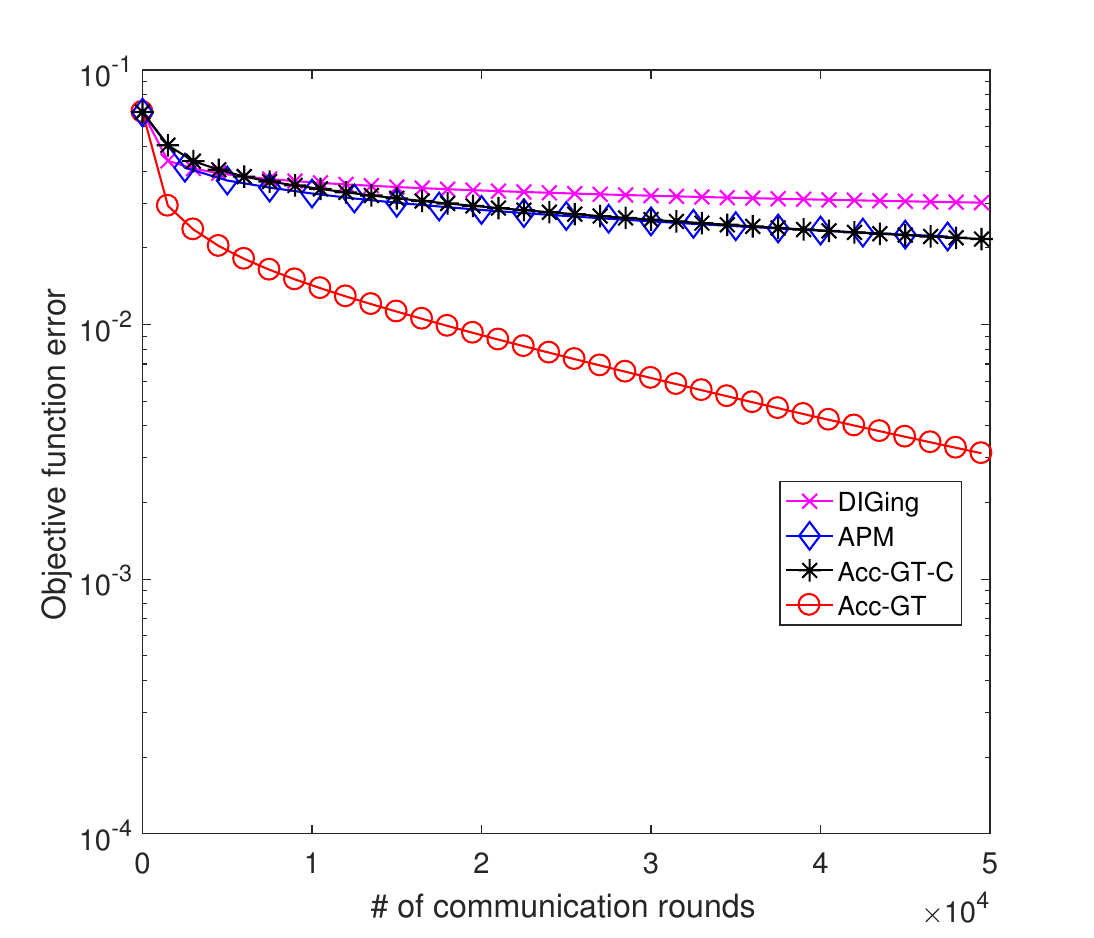}
&\includegraphics[width=0.5\textwidth,keepaspectratio]{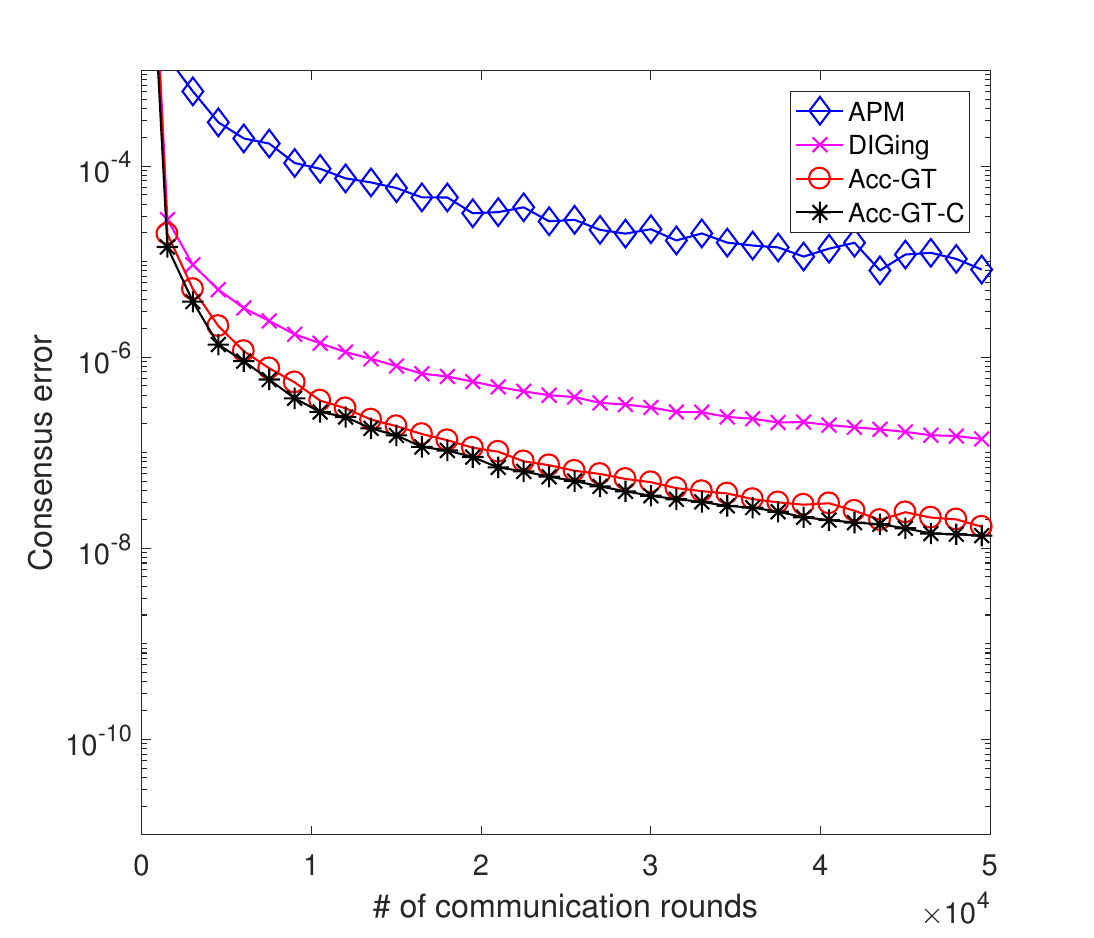}\\
\includegraphics[width=0.5\textwidth,keepaspectratio]{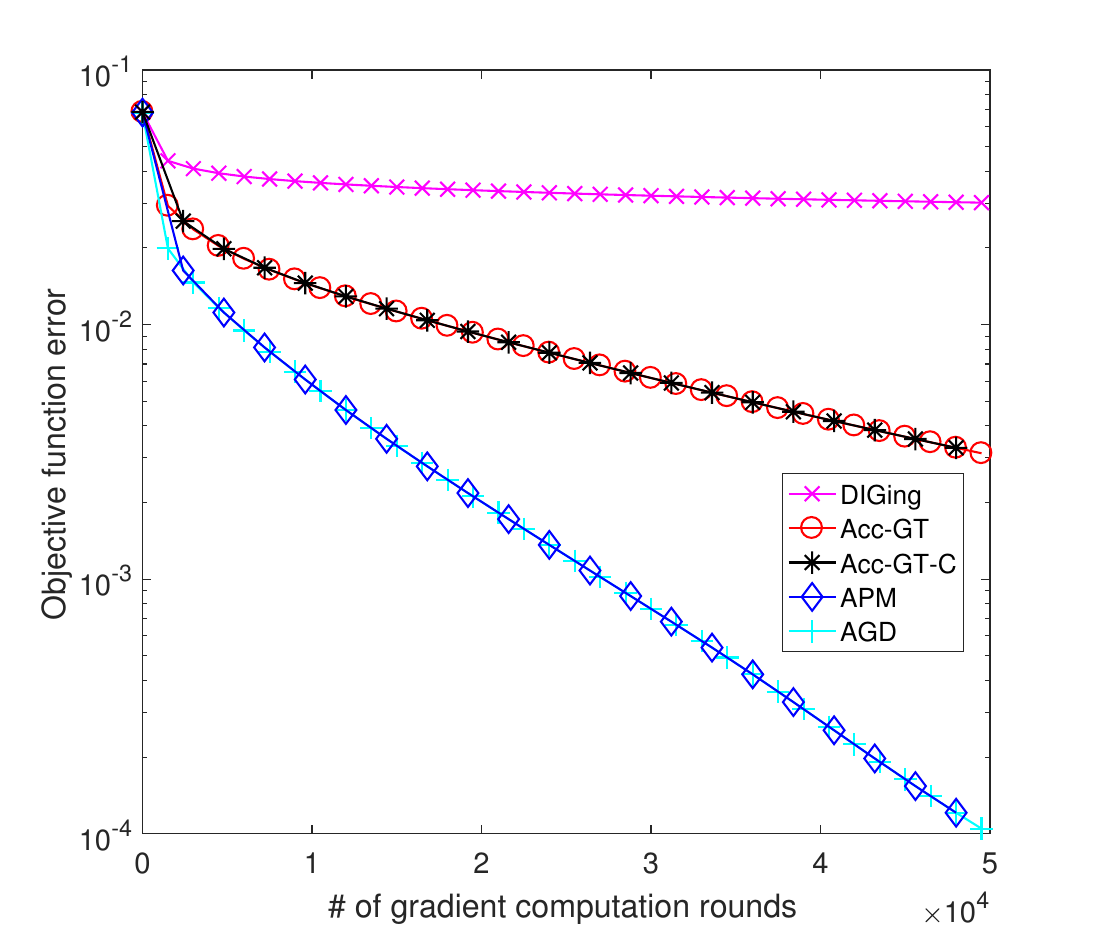}
&\includegraphics[width=0.5\textwidth,keepaspectratio]{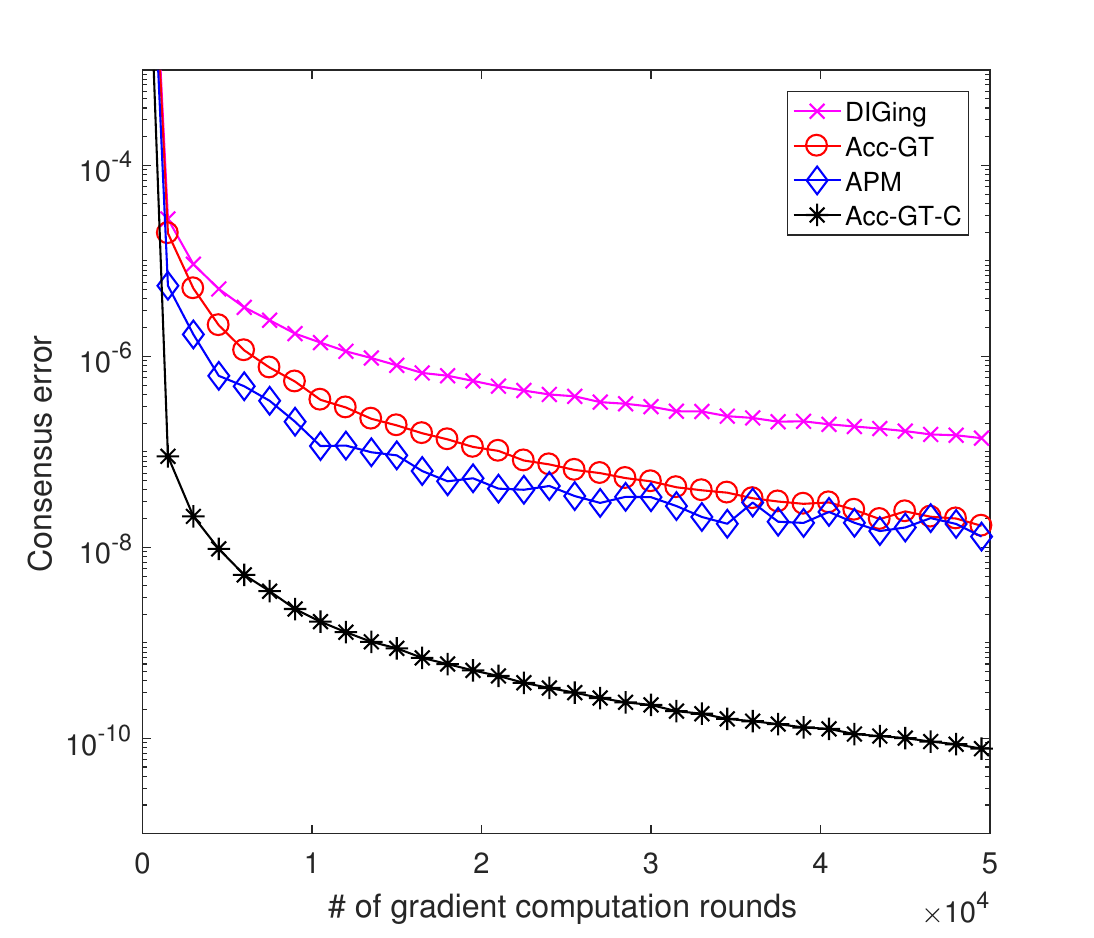}\\
\end{tabular}
\caption{Comparisons of the objective function errors (left) and consensus errors (right) with respect to the number of communication (top) and computation (bottom) rounds for nonstrongly convex problem with $d=2$.}\label{fig3}
\end{figure}

\begin{figure}[pt]
\begin{tabular}{@{\extracolsep{0.001em}}c@{\extracolsep{0.001em}}c}
\includegraphics[width=0.5\textwidth,keepaspectratio]{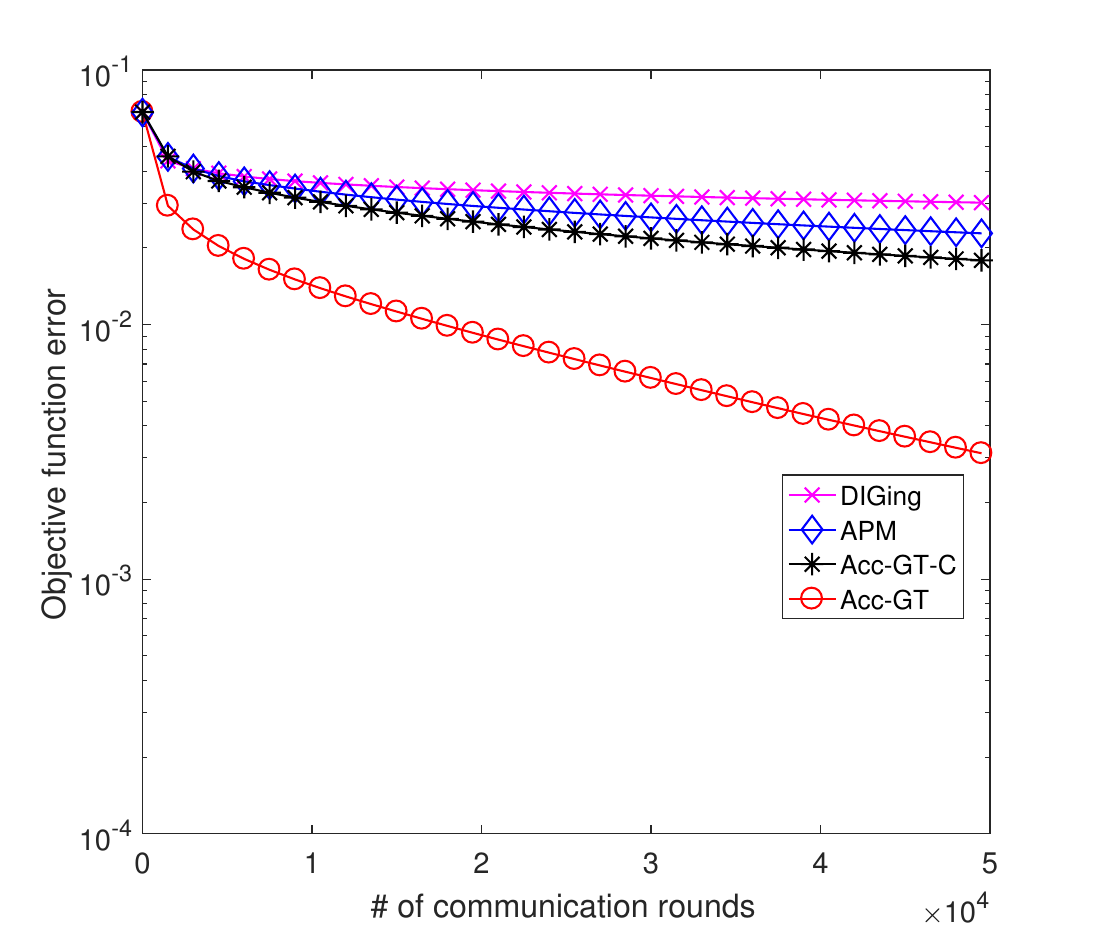}
&\includegraphics[width=0.5\textwidth,keepaspectratio]{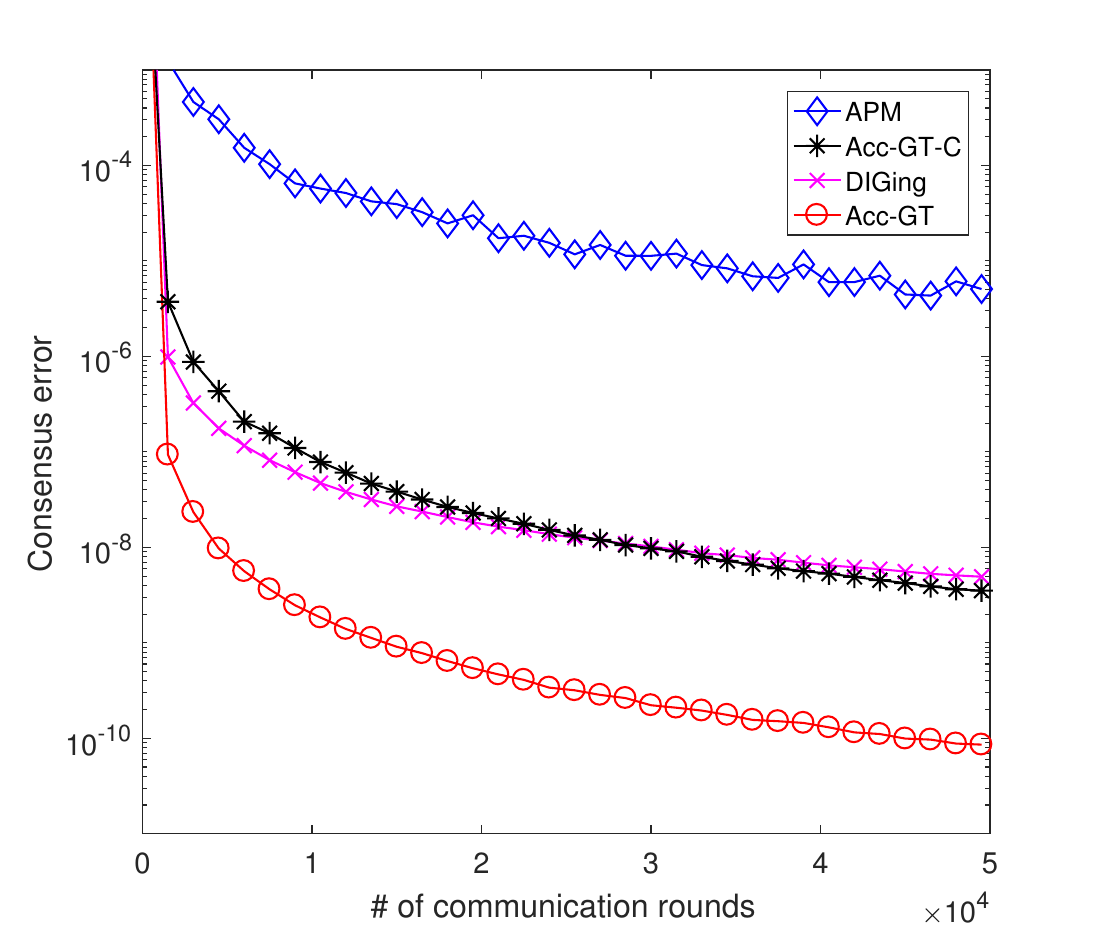}\\
\includegraphics[width=0.5\textwidth,keepaspectratio]{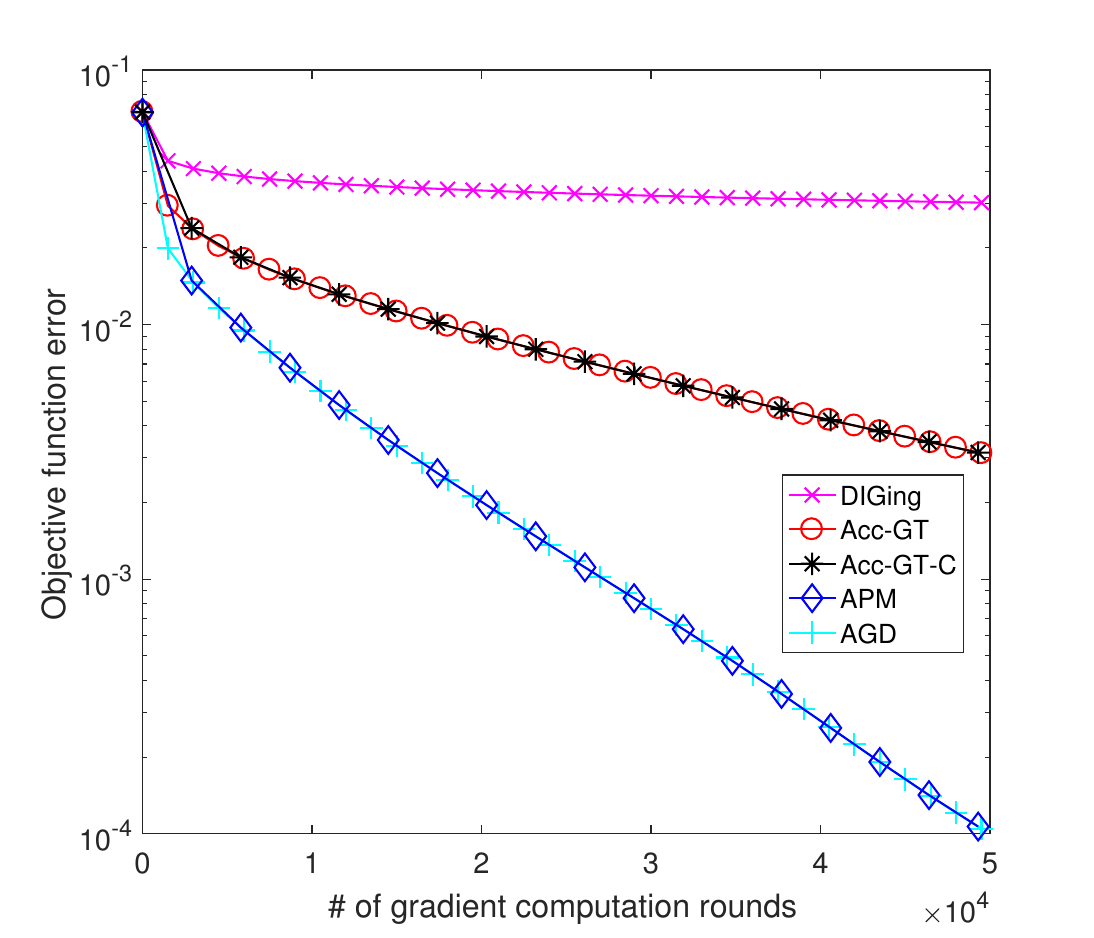}
&\includegraphics[width=0.5\textwidth,keepaspectratio]{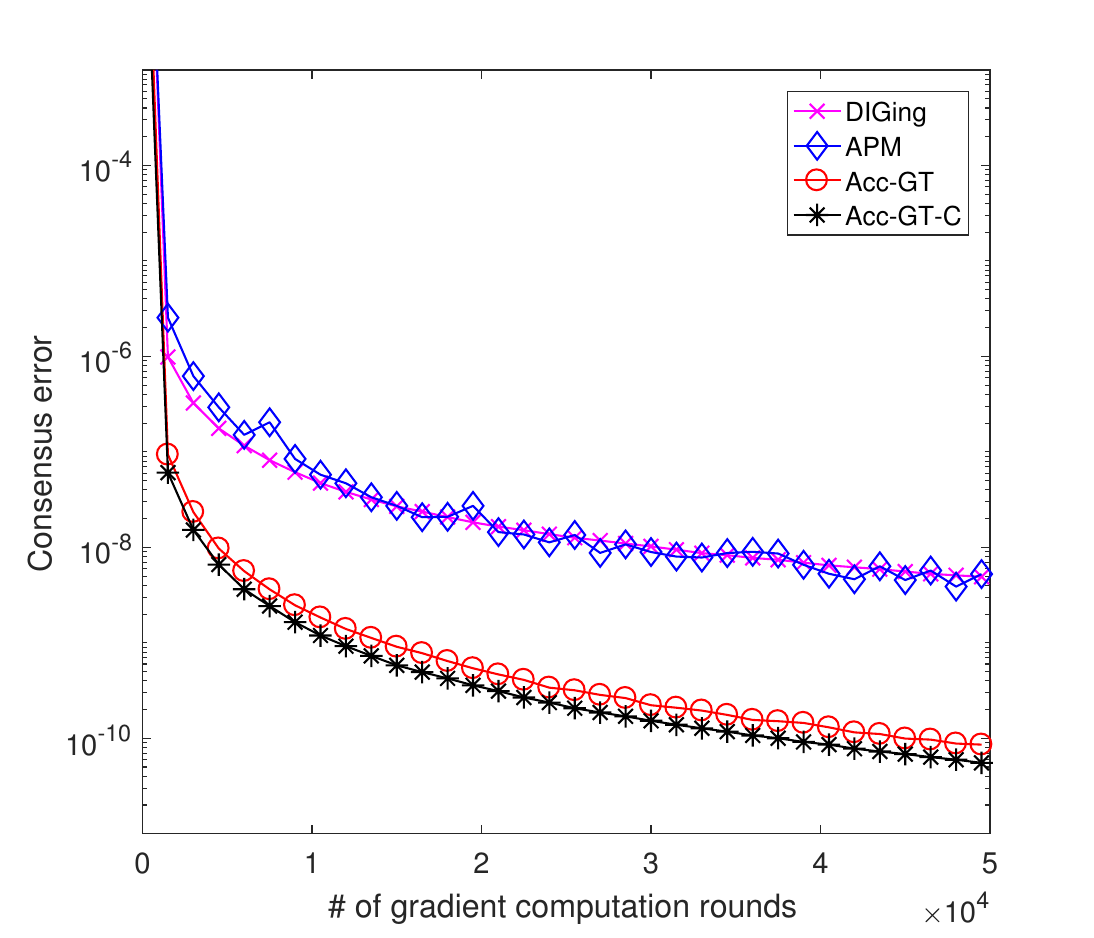}\\
\end{tabular}
\caption{Comparisons of the objective function errors (left) and consensus errors (right) with respect to the number of communication (top) and computation (bottom) rounds for nonstrongly convex problem with $d=20$.}\label{fig4}
\end{figure}

\section{Conclusion}\label{sec:conclusion}
This paper extends the widely used accelerated gradient tracking to time-varying network, which was originally proposed in \citep{qu2017-2} only for static network. We prove the state-of-the-art complexities for both nonstrongly convex and strongly convex problems with the optimal dependence on the precision $\epsilon$ and the condition number $L/\mu$, matching that of the classical centralized accelerated gradient descent. When the network is static, our complexities improve significantly over the previous ones proved in \citep{qu2017-2}. When combing with the Chebyshev acceleration, Our complexities exactly match the lower bounds for both nonstrongly convex and strongly convex problems over static graphs.

This paper only considers the $\gamma$-connectivity of time-varying graphs. Some researchers formulate the time-varying graphs as random graphs \citep{Hong17,Jakovetic14,Ananduta20} and use the mean connectivity in expectation. It is an interesting future work to extend our proof techniques to random graphs. Another interesting direction is to study the acceleration over time-varying unbalanced directed graphs \citep{shi2017}. Besides gradient tracking, EXTRA is another important family of decentralized optimization algorithms. However, it remains an open problem to extend EXTRA to time-varying graphs.

\appendix
\section{Proof of Lemma \ref{lemma_inexact}}
\begin{proof}
From the $\mu$-strong convexity and $L$-smoothness of $f_{(i)}$, we have
\begin{eqnarray}
\hspace*{-3.5cm}\begin{aligned}\notag
F(w)=&\frac{1}{m}\sum_{i=1}^m f_{(i)}(w)\\
\geq&\frac{1}{m}\sum_{i=1}^m\left( f_{(i)}(y_{(i)}^k)+\<\nabla f_{(i)}(y_{(i)}^k),w-y_{(i)}^k\>+\frac{\mu}{2}\|w-y_{(i)}^k\|^2 \right)
\end{aligned}
\end{eqnarray}
\begin{eqnarray}
\begin{aligned}\notag
=&\frac{1}{m}\sum_{i=1}^m\left( f_{(i)}(y_{(i)}^k)+\<\nabla f_{(i)}(y_{(i)}^k),w-y_{(i)}^k\>+\frac{\mu}{2}\|w-\overline y^k\|^2\right.\\
&\qquad\quad\left.+\frac{\mu}{2}\|\overline y^k-y_{(i)}^k\|^2+\mu\<w-\overline y^k,\overline y^k-y_{(i)}^k\> \right)\\
\overset{a}=&\frac{1}{m}\sum_{i=1}^m\left( f_{(i)}(y_{(i)}^k)+\<\nabla f_{(i)}(y_{(i)}^k),w-y_{(i)}^k\>+\frac{\mu}{2}\|w-\overline y^k\|^2+\frac{\mu}{2}\|\overline y^k-y_{(i)}^k\|^2 \right)\\
\geq&\frac{1}{m}\sum_{i=1}^m\left( f_{(i)}(y_{(i)}^k)+\<\nabla f_{(i)}(y_{(i)}^k),w-y_{(i)}^k\>+\frac{\mu}{2}\|w-\overline y^k\|^2 \right)\\
\overset{b}=& f(\overline y^k,\y^k)+\<\overline s^k,w-\overline y^k\>+\frac{\mu}{2}\|w-\overline y^k\|^2,
\end{aligned}
\end{eqnarray}
and
\begin{eqnarray}
\begin{aligned}\notag
F(w)\leq&\frac{1}{m}\sum_{i=1}^m\left( f_{(i)}(y_{(i)}^k)+\<\nabla f_{(i)}(y_{(i)}^k),w-y_{(i)}^k\>+\frac{L}{2}\|w-y_{(i)}^k\|^2 \right)\\
=&\frac{1}{m}\sum_{i=1}^m\left( f_{(i)}(y_{(i)}^k)+\<\nabla f_{(i)}(y_{(i)}^k),w-y_{(i)}^k\>+\frac{L}{2}\|w-\overline y^k\|^2\right.\\
&\qquad\quad\left.+\frac{L}{2}\|\overline y^k-y_{(i)}^k\|^2+L\<w-\overline y^k,\overline y^k-y_{(i)}^k\> \right)\\
\overset{c}=&\frac{1}{m}\sum_{i=1}^m\left( f_{(i)}(y_{(i)}^k)+\<\nabla f_{(i)}(y_{(i)}^k),w-y_{(i)}^k\>+\frac{L}{2}\|w-\overline y^k\|^2+\frac{L}{2}\|\overline y^k-y_{(i)}^k\|^2 \right)\\
\overset{d}=& f(\overline y^k,\y^k)+\<\overline s^k,w-\overline y^k\>+\frac{L}{2}\|w-\overline y^k\|^2+\frac{L}{2m}\|\Pi\y^k\|^2,
\end{aligned}
\end{eqnarray}
where $\overset{a}=$ and $\overset{c}=$ use the definition of $\overline y^k$ in (\ref{def_av_x}), $\overset{b}=$ and $\overset{d}=$ use the definition of $f(\overline y^k,\y^k)$ in (\ref{def_inexact_f}), (\ref{s-relation}), and the definition of $\Pi\y$ in (\ref{def_pi}).
\end{proof}

\bibliography{arxiv}
\bibliographystyle{icml2020}

\end{document}